\newcommand{\norm}[1]{\left\lVert#1\right\rVert}
\newcommand{\abs}[1]{\left|#1\right|}
\newcommand{\menge}[1]{\left\lbrace#1\right\rbrace}
\newcommand{\intD}{\;\mathrm{d}}
\newcommand{\rom}[1]{\uppercase\expandafter{\romannumeral #1\relax}}
\newtheorem{theorem}{Theorem}
\newtheorem{lemma}{Lemma}
\newtheorem{corollary}{Corollary}
\newtheorem{definition}{Definition}
\newcommand{\inner}[1]{\left< #1 \right>}
\newcommand{\fu}{\mathbf{u}}
\newcommand{\ofu}{\overline{{\mathbf{u}}}}
\newcommand{\ofus}{{\overline{\mathbf{u}}_{\#}}}
\newcommand{\fv}{\mathbf{v}}
\newcommand{\fw}{\mathbf{w}}
\newcommand{\fx}{\mathbf{x}}
\newcommand{\fm}{\mathbf{m}}
\newcommand{\fg}{\mathbf{g}}
\newcommand{\balpha}{{\boldsymbol{\alpha}}}
\newcommand{\bbeta}{{\boldsymbol{\beta}}}
\newcommand{\Tr}{\top}
\title{Structure-preserving neural networks for the regularized entropy-based closure of the Boltzmann moment system
\thanks{
Notice:  This manuscript has been authored, in part, by UT-Battelle, LLC, under contract DE-AC05-00OR22725 with the US Department of Energy (DOE). The US government retains and the publisher, by accepting the article for publication, acknowledges that the US government retains a nonexclusive, paid-up, irrevocable, worldwide license to publish or reproduce the published form of this manuscript, or allow others to do so, for US government purposes. DOE will provide public access to these results of federally sponsored research in accordance with the DOE Public Access Plan (http://energy.gov/downloads/doe-public-access-plan).
}

\author{
  Steffen Schotthöfer \\
  Computer Science and Mathematics Division \\
  Oak Ridge National Laboratory \\
  Oak Ridge, TN 37831 USA\\
  \texttt{schotthofers@ornl.gov} \\
  %% examples of more authors
   \And
  M.~Paul Laiu\\\
  Computer Science and Mathematics Division \\
  Oak Ridge National Laboratory \\
  Oak Ridge, TN 37831 USA\\
  \texttt{laiump@ornl.gov} \\
  \And
  Martin Frank\\\
  Faculty of Mathematics \\
  KIT \\
  Karlsruhe, Germany\\
  \texttt{martin.frank@kit.edu} \\
   \And
  Cory D. Hauck\\\
  Computer Science and Mathematics Division \\
  Oak Ridge National Laboratory, and
Department of Mathematics (Joint Faculty),  University of Tennessee \\
  Oak Ridge, TN 37831 USA\\
  \texttt{hauckc@ornl.gov} \\
  %% \AND
  %% Coauthor \\
  %% Affiliation \\
  %% Address \\
  %% \texttt{email} \\
  %% \And
  %% Coauthor \\
  %% Affiliation \\
  %% Address \\
  %% \texttt{email} \\
  %% \And
  %% Coauthor \\
  %% Affiliation \\
  %% Address \\
  %% \texttt{email} \\
}}
\begin{document}
\maketitle

\begin{abstract}
The main challenge of large-scale numerical simulation of radiation transport is the high memory and computation time requirements of discretization methods for kinetic equations. In this work, we derive and investigate a neural network-based  approximation to the entropy closure method to accurately compute the solution of the multi-dimensional moment system with a low memory footprint and competitive computational time. We extend 
methods developed for the standard entropy-based closure 
to the context of regularized entropy-based closures. The main idea is to interpret structure-preserving neural network approximations of the regularized entropy closure as a two-stage approximation to the original entropy closure. We conduct a numerical analysis of this approximation and investigate optimal parameter choices. Our numerical experiments demonstrate that the method has a much lower memory footprint than traditional methods with competitive computation times and simulation accuracy. The code and all trained networks are provided on GitHub\footnote{\url{https://github.com/ScSteffen/neuralEntropyClosures}}$^,$\footnote{\url{https://github.com/CSMMLab/KiT-RT}}. 
\end{abstract}

% keywords can be removed
\keywords{Kinetic Theory \and Moment Methods \and Entropy Closure \and Regularized Optimization \and Neural Networks}

\section{Introduction}
Numerically solving kinetic equations is at the heart of many physics-based applications, including neutron transport~\cite{NeutronTransport}, radiation transport~\cite{Chahine1987FoundationsOR}, semiconductors~\cite{Markowich1990SemiconductorE} and rarefied gas dynamics~\cite{BoltzmannApplications}.
A kinetic equation is a high-dimensional integro-differential equation formulated over a position-momentum phase space that can be as large as six dimensions. The high dimensionality of the phase space presents a severe computational challenge for large-scale numerical simulations. In addition, kinetic equations possess intricate structural properties: Galilean invariance, conserved quantities, and entropy dissipation (which implies hyperbolicity and an H-Theorem) \cite{Levermore1996MomentCH}.
It is key to the stability of a numerical method to explicitly preserve as many structural properties as possible.

Several methods have been proposed to solve kinetic equations, which are generally divided into nodal and modal methods. Nodal methods (also known as discrete velocity or discrete ordinates methods) evaluate the velocity space at specific points, yielding a system of equations only coupled by a quadrature approximation for the integral scattering operator. While computationally efficient, these methods suffer from numerical artifacts, which are called ray effects \cite{lathrop1968ray} when few discrete velocities are used, and on the other hand have a very large memory footprint when many velocities are used.

Modal methods (also known as moment methods) eliminate the velocity dependence of the phase space by computing the moment hierarchy of the kinetic equation. Due to the intrinsic structure of the advection term the moment system requires a closure~\cite{Levermore}. Modal methods differ in the closure model for the kinetic density in the transport term. The classical P$_N$ closure reconstructs the kinetic density linearly from the moment basis, which is computationally very fast. Its main drawbacks are spurious numerical artifacts such as large oscillations of the solution in the presence of highly anisotropic particle distributions, which may even result in negative solutions in the particle density ~\cite{GarretHauck}.  Filtering procedures may mitigate these oscillations \cite{MCCLARREN20105597}, but ensuring positivity requires a sophisticated limiting strategy \cite{laiu2016positive,laiu2019positive,laiu2019positivity,hauck2010positive}.

A moment closure, which preserves almost all important physical and mathematical properties of the Boltzmann equation (the one exception being the superposition principle, cf.\ Section \ref{sec_momMethods}) is constructed by solving a convex, constrained optimization problem based on the principle of (mathematical) entropy minimization \cite{Levermore}. Convexity is the mathematical key, because it implies entropy dissipation. The drawback is that this so-called M$_N$ closure is far more expensive to compute, since an optimization problem has to be solved at each closure evaluation. Furthermore, this optimization problem is ill-conditioned in physically relevant situations, while gradient and Hessian evaluations usually require expensive quadratures.
Making solving entropy-based closures practically feasible has been studied in a series of papers: First experiments were performed in \cite{hauck2011entropy}; the structural properties of the optimization problem were investigated in \cite{AlldredgeHauckTits}; in \cite{KRISTOPHERGARRETT2015573}, it was demonstrated that entropy-based closures can be scaled onto large HPC systems; the ill-posedness of the optimization problem was addressed by applying regularization techniques \cite{AlldredgeFrankHauck}; the regularization introduces a numerical error that can be controlled and balanced with the spatio-temporal discretization error \cite{Alldredge_2023}). 

Recently, neural networks have been used to to relieve the computational burden of the optimization problem. One key idea is to use input-convex neural networks to approximate the entropy functional, and thereby close the system \cite{porteous2021datadriven}. Further, in \cite{pmlr_v162_schotthofer22a}, data sampling strategies and approximation errors of the neural network-based surrogate models have been investigated. Although successfully applied to low-order entropy closures, these neural network-based closures face severe challenges when approximating high-order closures in higher spatial dimensions~\cite{pmlr_v162_schotthofer22a}. Other approaches use a direct neural network-based modeling of the closure of the moment system. Examples include encoder-decoder networks \cite{Han21983}, Galilean invariant machine learning methods \cite{huang2020learning,weinan_Integrating_Machine_learning}, U-nets that consider global information of the solution field \cite{li_grad_global}, and learning the spatial gradient of the highest-order moment \cite{huang2021machine1, christlieb2024hyperbolic}. Contrary to entropy-based closures, the approach in \cite{huang2021machine1} requires additional work to ensure hyperbolicity \cite{huang2021machine2} and correct characteristic speeds \cite{huang2023machine3}.  It also requires training with kinetic data, which can be expensive to obtain.

This work presents a structure-preserving neural network-based surrogate model combining the regularized formulation of the entropy-based closure \cite{AlldredgeFrankHauck} with structure-preserving neural network-based closure approximations \cite{porteous2021datadriven}. In particular, we explore the difficulties associated with the non-regularized entropy closure in terms of neural network approximation and data sampling. To address these issues, we develop a normalized, regularized framework that generates input convex neural network approximations to the regularized entropy closure, which itself serves as a convex approximation to the non-regularized entropy closure.
We conduct an error analysis of this multi-step approximation, which includes neural network approximation error, regularization error, and scaling errors of the partially regularized closure. Lastly, we perform extensive numerical experiments comparing different neural network approximations for various higher-order entropy closures of different regularization levels in synthetic tests, as well as established simulation benchmarks in the field of radiation transport. The trained network graphs are incorporated into the open-source high-performance solver KiT-RT~\cite{Kitrt_paper}.

\section{Entropy-based moment closures}\label{sec_kt}
\subsection{Kinetic equations}\label{sec_kinetic_theory}
We consider a linear kinetic equation
\begin{equation}  \label{eq_boltzmann}
	\partial_t f+\fv \cdot \nabla_{\mathbf{x}} f = Q(f),
\end{equation}
which describes the evolution of a kinetic distribution function $f(t,\mathbf x, {\fv})$ in a many-particle system with phase space dependence  space $\mathbf x\in \mathbf X\subset\mathbb{R}^3$, and velocity ${\fv}\in  \mathbb{S}^2=\menge{{\fv}\in\mathbb{R}^3: \norm{{\fv}}=1}$ . 
Throughout this paper, we denote the Euclidean norm by $\norm{\cdot}$. The left-hand side of Eq.~\eqref{eq_boltzmann} models particle advection, and the right-hand side models interaction  with the background medium of other particles.  
We use the linear collision operator in this work, which reads
\begin{equation}
	Q(f)(\mathbf{v})=\int_{\mathbb{S}^2} k(\mathbf{x}, \fv\cdot\fv_*) \left[ f({\fv}_*)-f({\fv})\right] d{\fv}_*,
\end{equation}
where the collision kernel $k$ models the strength of collisions at different velocities. 
We consider in this paper a space-dependent, isotropic collision kernel, denoted by $\sigma_{\textup{s}}(\mathbf{x})$, however, the results can be extended to arbitrary collision kernels using the methods of~\cite{AlldredgeFrankHauck}. 
Further, we abbreviate the integral over the velocity space by
\begin{equation}
\inner{\cdot} = \int_{\mathbb{S}^2}\cdot \intD \mathbf{v}.
\end{equation}
Equation~\eqref{eq_boltzmann} becomes a well-posed problem with suitable boundary and initial conditions.

The linear kinetic equation possesses some key structural properties, which are intricately related to the physical processes and its mathematical existence and uniqueness theory~\cite{AlldredgeFrankHauck, Levermore1996MomentCH}. First, the time evolution of the solution is invariant in range, i.e. if $f(0,\mathbf{x},\mathbf{v})\in B$, then $f(t,\mathbf{x},\mathbf{v})\in B$ for all $t>0$. In this work, we have $B=[0,\infty)$. Particularly this implies the non-negativity of $f$, if the initial condition $f_0$ is non-negative.
Second, if $\varphi$ is a collision invariant, i.e.
\begin{align}\label{eq_collision_invariant}
    	\inner{\varphi Q(g)} = 0,\quad \forall g\in\mathrm{Dom}(Q),
\end{align}
then the equation
\begin{align}
    	\partial_t\inner{\varphi f} + \nabla_{\mathbf x}\cdot\inner{{\fv}\varphi f} = 0
\end{align}
is a local conservation law. Third, for each fixed direction $\mathbf{v}$, the advection operator, i.e. the left-hand side term of Eq.~\eqref{eq_boltzmann}, is hyperbolic in space and time. Fourth, for $D\subseteq\mathbb{R}_+$, any twice continuously differentiable, strictly convex function $\eta: D\rightarrow\mathbb{R}$ is called kinetic entropy density. It yields the local entropy dissipation law
\begin{align}\label{eq_kinetic_entropy_dissipation}
	\partial_t\inner{\eta(f)} + \nabla_{\mathbf x}\cdot\inner{{\fv}\eta(f)}= \inner{\eta'(f)Q(f)} \leq 0.
\end{align}
Thus the particle system always strives for a state of minimal mathematical (i.e. maximal physical) entropy. The domain $D$ of the entropy density is often consistent with the physical bounds $B$ of the problem and we assume $D=B=[0,\infty)$ in the following.
Next, the solution $f$ fulfills the H-theorem that characterizes equilibria of the linear kinetic equation.
Lastly,  the solution $f$ will be invariant under rotations in $(\fx,\fv)$ and translations in $\fx$, as long as such transformations are allowed by the kernel $k$.

\subsection{Entropy-based closures}\label{sec_momMethods}
Moment methods encode the velocity dependence of the linear kinetic equation by multiplication of $f$ with a vector of velocity-dependent basis functions ${\fm}(\mathbf{v}):\mathbb{S}^2\rightarrow\mathbb{R}^{n+1}$ that consist of polynomials up to total degree $N$%
\footnote{In general $n+1 = (N+1)^2$.  For special geometries which allow reduction to two spatial dimensions, the number of non-zero moments is $\frac12(N+1)(N+2)$ \cite{GarretHauck}, and for one-dimensional slab geometries, it is $N+1$ \cite{lewis1984computational}.}
and subsequent integration over $\mathbb{S}^2$. {The basis $\fm$ is decomposed into polynomial blocks  $\fm =[m_0,\,\fm_1^\top,\dots,\fm_N^\top]^\top$.}
The solution of the resulting moment equation is the moment vector $\mathbf u_f: [0, T]\times \mathbf{X}\rightarrow\mathbb{R}^{n+1}$, which is defined as
\begin{align}\label{eq_momentDef}
 \mathbf u_f(t,\mathbf x)=\inner{{\fm}({\fv}) f(t,\mathbf{x},{\fv})}.
 \end{align}
Common choices for the basis functions are monomials or spherical harmonics, depending on the application. {In the following, we consider spherical harmonics basis functions.} Typically, they include the collision invariants defined in Eq.~\eqref{eq_collision_invariant}.
The moment vector satisfies the system of transport equations
\begin{align}\label{eq_momentRTa}
\partial_t \mathbf u_f(t,\mathbf{x}) + \nabla_\mathbf{x}\cdot\inner{\mathbf{v} \otimes{\fm}(\mathbf{v})f}= \inner{{\fm}(\mathbf{v})Q(f)},
\end{align}
which is called the moment system.
By construction, the advection and collision operators still depend on $f$, and thus the moment system is not closed. Moment methods aim to find a meaningful closure for this system.
Since the kinetic equation dissipates entropy and fulfills a local entropy dissipation law, a suitable closure can be built by reconstructing an ansatz density $f_\fu$ from moment $\fu$ such that $f_\fu$ is the one with minimal entropy among all functions that satisfy the moment condition $\mathbf u=\inner{{\fm} g}$. We denote the elements of $\fu=[ u_0, \dots, u_n]^\top$ and in polynomial degree notation as $\fu=[u_0,\fu_1^\top,\dots,\fu_N^\top]^\top$.
The entropy-based closure, or M$_N$ method, can be formulated as a constrained optimization problem for a given vector of moments $\mathbf u  \in \mathbb{R}^{n+1}$, i.e., the ansatz $f_\fu$ solves
\begin{align}\label{eq_entropyOCP}
\min_{g\in F_{{\fm}}} \inner{\eta(g)}\quad  \text{ s.t. } \mathbf u=\inner{{\fm} g},
\end{align}
where $F_{{\fm}}=\menge{g\in \text{Dom}(Q): \text{Range}(g)\subseteq D \text{ and} \inner{{\fm} g}<\infty}$.
A necessary condition for the existence of minimizers in \eqref{eq_entropyOCP} is that $\fu$ is an element of the \textit{realizable set}
\begin{align}
	\mathcal{R}=\menge{\mathbf u: \inner{{\fm}g}=\mathbf u,\, g\in F_{{\fm}}}.
\end{align} 
When a minimizer $f_\fu$ exists%
\footnote{Even if $\fu\in\mathcal{R}$, there may not exist a solution to \eqref{eq_entropyOCP} \cite{Junk1999,Hauck2008ConvexDA} in general. In the setting of this work, there is always a solution.}
for Eq.~\eqref{eq_entropyOCP}, it is unique and takes the form
\begin{align}\label{eq_entropyRecosntruction}
	{f}_\fu({\fv}) = \eta'_*(\balpha_\fu\cdot {\fm}({\fv})),
\end{align}
where the Lagrange multiplier  $ \balpha_{\fu}\in\mathbb{R}^{n+1} $ is the solution of the convex dual problem\footnote{These dual problems are often formulated as concave maximization problems in the literature~\cite{boyd2004convex}. In this paper, we choose to formulate them as minimization problems.}
\begin{align}\label{eq_entropyDualOCP}
	\balpha_\fu =  \underset{\balpha\in\mathbb{R}^{n+1}}{\text{argmin}}
	\,\phi(\balpha;\fu)\quad\text{with}\quad
  	\phi(\balpha;\fu):= \inner{\eta_*(\balpha\cdot {\fm})} - \balpha\cdot \fu 
\end{align}
and $\eta_*$ is the Legendre dual of $\eta$.
The moment system \eqref{eq_momentRTa} is then closed by replacing $f$ with the minimizer $f_\fu$ of Eq.~\eqref{eq_entropyOCP}, which leads to
\begin{align}\label{eq_momentRT}
	\partial_t \fu(t,\mathbf{x}) + \nabla_\mathbf{x}\cdot\inner{\mathbf{v} \otimes{\fm}(\mathbf{v})f_\fu}= \inner{{\fm}(\mathbf{v})Q(f_\fu)}.
\end{align}
In this case, the minimum of \eqref{eq_entropyOCP},
\begin{equation}
    h(\mathbf u):=\inner{\eta(f_{\mathbf u})},
\end{equation}
is a convex entropy for the closed moment system \eqref{eq_momentRT} (see, e.g., \cite{AlldredgeFrankHauck}). 
By strong duality, the minimum of \eqref{eq_entropyOCP} equals the negative of the minimum of  \eqref{eq_entropyDualOCP}. Thus, an equivalent expression for $h$ is given by \begin{align}\label{eq_entropyFunctionalH}
h(\fu) = -\phi(\balpha_\fu;\fu)=  \balpha_\fu\cdot \fu - \inner{\eta_*(  \balpha_\fu\cdot {\fm})},
\end{align}
and it is well-known \cite{AlldredgeFrankHauck} that $h$ is twice differentiable and convex.
The optimality and uniqueness of the primal solution $f_\fu$ in  Eq.~\eqref{eq_entropyRecosntruction} is then confirmed by using the strong duality of \eqref{eq_entropyOCP} and \eqref{eq_entropyDualOCP}.
In particular, the first-order optimality condition of  Eq.~\eqref{eq_entropyDualOCP} leads to
\begin{align}\label{eq_recons_u}
	\fu=\inner{{\fm} \eta'_*(\balpha_\fu\cdot {\fm})},
\end{align}
which
yields the inverse of the solution map $\fu\mapsto\balpha_\fu$ of the dual optimization problem.
Furthermore,  the derivative of $h$ recovers the optimal Lagrange multipliers of Eq.~\eqref{eq_entropyDualOCP}, i.e.,
\begin{align}\label{eq_derivH}
\nabla_\fu h (\fu) =  \balpha_\fu,
\end{align}
and the Hessian of the dual objective function $\phi$ with respect to $\balpha$ is given by
\begin{align}\label{eq_hessian}
	H(\balpha)= \inner{{\fm}\otimes {\fm}\,\eta_*^{\prime\prime}(\balpha\cdot {\fm})}.
\end{align}
This entropy-based closure also conserves the structural properties of the linear kinetic equation~\cite{Levermore, AlldredgeFrankHauck} stated in Section~\ref{sec_kinetic_theory}.
In the following, we consider the closure corresponding to the Maxwell-Boltzmann entropy with $D=[0,\infty)$,
\begin{align}\label{eq_MB_entropy}
	\eta(f)=f\log(f)-f,\quad \eta'(f) = \log(f),\quad \eta_*(z) = \exp(z),\quad \eta_*'(z)=\exp(z)\:.
\end{align}

\subsection{Approximation of entropy-based closure with neural networks}\label{sec_challenges}
The entropy-based closure faces practical challenges.
In particular, solving the convex optimization problem of Eq.~\eqref{eq_entropyDualOCP} at every time step in each grid cell of a numerical simulation comes at a high computational cost, especially near the boundary of ${\mathcal{R}}$, where the condition number of the Hessian in the optimization \eqref{eq_hessian} diverges~\cite{KRISTOPHERGARRETT2015573, AlldredgeHauckTits}.
This issue motivates the neural network-based entropy closures proposed in~\cite{porteous2021datadriven, pmlr_v162_schotthofer22a}, where a convex neural network is used to approximate the entropy function $h$ and to construct an approximate ansatz to close the moment system \eqref{eq_momentRTa}.
In previous work~\cite{pmlr_v162_schotthofer22a}, $h$ is approximated on 
\begin{align}
\label{eq:Rbar}
\overline{\mathcal{R}} &= \menge{\fu\in\mathcal{R}: u_0=1}\subset\mathbb{R}^{n+1}
\end{align}
by an input convex neural network N$_\theta$ with parameters $\theta$, i.e., $\forall \overline{\fu}\in\overline{\mathcal{R}}$,
\begin{align}
\label{eq:nn}
	\text{N}_\theta(\overline{\fu})\approx h(\overline{\fu}) \quad\text{ and } \quad \nabla_\fu\text{N}_\theta (\overline{\fu}) \approx \balpha_{\ofu}.
\end{align}
This neural network-based closure reduces the computational effort by several orders of magnitude from that of an iterative optimizer \cite{pmlr_v162_schotthofer22a}.

%\subsection{Challenges near the realizable boundary}
Even though the neural network-based closure reduces the computation cost, it still suffers from the numerical difficulties near the boundary of the realizable set.
To facilitate the discussion, we introduce 
the reduced, normalized realizable set 
\begin{align}\label{eq_reduced_normalizer_realizable_set}
\widetilde{\mathcal{R}} &= \menge{\mathbf{w}\in\mathbb{R}^{n}:[1,{\mathbf{w}}^\Tr]^\Tr\in\overline{\mathcal{R}}}\subset\mathbb{R}^{n}\:.
\end{align}
Note that both $\overline{\mathcal{R}}$, defined in Eq.~\eqref{eq:Rbar}, and $\widetilde{\mathcal{R}}$ are bounded sets.
\begin{definition}\label{def_fruncation_operator}
We define the normalization operator
\begin{align}\label{eq_normalization_operator}
 \overline{(\cdot)}\,\colon\mathbb{R}^{{n+1}}\to\mathbb{R}^{n+1}\: \text{ defined such that }\: \fu \mapsto \overline{\fu}	:= \tfrac{\fu}{u_0}
\end{align}
and the ``fruncation'' operator 
\begin{align}\label{eq_reduction_operator}
   	(\cdot)_\#\colon \mathbb{R}^{n+1}\to\mathbb{R}^{n}\:\text{ defined such that }\:  \fu \mapsto \fu_\# := [\fu_1^\top,\dots,\fu_N^\top]^\top \in \mathbb{R}^{n}\: .
\end{align}
Further, repeated application of the fruncation operator $(\cdot)_\#$ to the moment vector removes moments associated with the (currently) lowest degree basis functions, e.g. $({(\fu)_{\#}})_{\#}=[\fu_2^\top,\dots,\fu_N^\top]^\top$. 
\end{definition}
    Therefore, $\fu(=[u_0,(\fu_\#)^\top]^\top)\in\mathcal{R}$ if and only if
$\overline{\fu}(=\frac{\fu}{u_0})\in\overline{\mathcal{R}}$ and $\ofus \in \widetilde{\mathcal{R}}$.

It is known \cite{AlldredgeFrankHauck,AlldredgeHauckTits,GarretHauck, pmlr_v162_schotthofer22a} that when $\overline{\fu}$ approaches the boundary $\partial\overline{\mathcal{R}}$, the associated multiplier $\balpha_{\ofu}$ becomes unbounded. Furthermore, the condition number of the Hessian for the associated optimization problem grows, and at   $\partial\overline{\mathcal{R}}$ the problem is singular. For example, in a reduced, one-dimensional slab geometry, the kinetic density $f$ associated to $\overline{\fu}\in\partial\overline{\mathcal{R}}$ is a sum of finitely many $\delta$-functions~\cite{Curto_recursiveness,Monreal_210538}, i.e.,
\begin{align}\label{eq_delta_sum}
	f({\fv}) = \sum_{i=1}^l c_i\delta(p_i({\fv})),
\end{align}
where $\delta$ is a Dirac distribution, $c_i>0$ and each $p_i$ is a rational function depending on $\overline{\fu}$. 
In the following theorem, we describe the limit behavior of $h$ as $\overline{\fu}$ approaches $\partial\overline{\mathcal{R}}$.
Its proof is given in \ref{sec_appendix_kt}.
\begin{theorem}\label{theo_diverging_entropy} Given a slab geometry and Maxwell-Boltzmann kinetic entropy \eqref{eq_MB_entropy}, the moment entropy function $h$ diverges to infinity as 
$\overline{\fu}\rightarrow\partial\overline{\mathcal{R}}$.
\end{theorem}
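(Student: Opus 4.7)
I argue by contradiction. Since $\overline{\mathcal{R}}$ is bounded, if the claim fails there is a sequence $\overline{\fu}_n\to\overline{\fu}^*\in\partial\overline{\mathcal{R}}$ along which $h(\overline{\fu}_n)\leq M<\infty$. Let $f_n=\exp(\balpha_{\overline{\fu}_n}\cdot\fm)$ be the corresponding Maxwell-Boltzmann primal minimizers on the slab velocity domain $[-1,1]$. Combining \eqref{eq_MB_entropy} with the unit-mass normalization $\inner{f_n}=1$ (which follows from $m_0\equiv 1$ and $(\overline{\fu}_n)_0=1$) turns the hypothesis into $\inner{f_n\log f_n}\leq M+1$.

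Next I would extract a weak $L^1$ limit. By the de la Vall\'ee Poussin criterion applied with $\phi(t)=t\log^+ t$, the uniform bound on $\inner{f_n\log f_n}$ together with the unit-mass constraint makes $\{f_n\}$ uniformly integrable on the compact set $[-1,1]$. The Dunford-Pettis theorem then supplies a subsequence (not relabelled) converging weakly in $L^1$ to some $f^*\in L^1([-1,1])$ with $f^*\geq 0$ and $\inner{f^*}=1$. Because the polynomial basis $\fm$ is bounded on $[-1,1]$, moments pass to the limit: $\inner{\fm f^*}=\overline{\fu}^*$.

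The structural step uses $\overline{\fu}^*\in\partial\overline{\mathcal{R}}$. Since $\overline{\mathcal{R}}$ is convex and its closure contains $\fm(v)$ for every $v\in[-1,1]$ (approached by moments of narrowly peaked non-negative $L^1$ densities concentrating at $v$), the supporting hyperplane theorem supplies a non-zero $\bbeta\in\mathbb{R}^{n+1}$ for which
\begin{equation*}
p(v):=\bbeta\cdot\fm(v)-\bbeta\cdot\overline{\fu}^*
\end{equation*}
satisfies $p(v)\geq 0$ on $[-1,1]$ with $p\not\equiv 0$. Integrating against $f^*$ gives $\inner{p\,f^*}=\bbeta\cdot\overline{\fu}^*-\bbeta\cdot\overline{\fu}^*=0$, hence $p\,f^*=0$ almost everywhere. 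But $p$ is a non-trivial polynomial with only finitely many zeros in $[-1,1]$, so $f^*=0$ a.e., contradicting $\inner{f^*}=1$.

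The main obstacle is the structural step: matching a supporting hyperplane of $\overline{\mathcal{R}}$ at a boundary point with a non-negative polynomial on $[-1,1]$. This is the classical duality between the truncated Hausdorff moment problem and the cone of non-negative polynomials, and it is exactly what prevents any $L^1$ density from realizing a moment in $\partial\overline{\mathcal{R}}$ (consistent with the atomic representation \eqref{eq_delta_sum}). The other pieces---the de la Vall\'ee Poussin/Dunford-Pettis extraction and passage of moments through weak $L^1$ convergence---are routine once the entropy bound is in hand. A more direct attack through the dual identity $h(\overline{\fu})=\balpha_{\overline{\fu}}\cdot\overline{\fu}-1$ looks less promising, since natural test multipliers of the form $\balpha=-\lambda\bbeta$ drive $\inner{\exp(\balpha\cdot\fm)}$ in the wrong direction to force blow-up.
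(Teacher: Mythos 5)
Your argument is correct, but it takes a genuinely different route from the paper. The paper's proof exhibits one explicit concentrating sequence $f_n=n\mathds{1}_{B_n^{\fv^*}}$ whose Maxwell--Boltzmann entropy evaluates to $\log n-1\to\infty$, and appeals to the atomic representation \eqref{eq_delta_sum} of boundary densities; it is a direct computation showing \emph{why} concentration costs unbounded entropy. Your proof runs in the contrapositive direction and is the stronger statement one actually needs: a uniform bound on $h(\overline{\fu}_n)$ bounds $\inner{f_n\log f_n}$, de la Vall\'ee Poussin and Dunford--Pettis then rule out concentration by extracting a weak $L^1$ limit $f^*$ with unit mass, and the duality between $\partial\overline{\mathcal{R}}$ and nonnegative polynomials on $[-1,1]$ shows no such $L^1$ density can realize a boundary moment. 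This genuinely lower-bounds the \emph{minimum} entropy over all densities matching $\overline{\fu}_n$, whereas the paper's computation only evaluates the entropy of one particular realizing sequence (and $h$ is an infimum, so an upper-style computation on a single sequence does not by itself force the infimum to diverge); your compactness argument is what makes that step airtight, at the cost of invoking heavier functional-analytic machinery. Two small points to tighten: the supporting hyperplane must be taken relative to the affine hull $\{u_0=1\}$ (otherwise $\bbeta=[\beta_0,\mathbf{0}^\Tr]^\Tr$ gives $p\equiv 0$), which is what guarantees $\bbeta_\#\neq 0$ and hence that $p$ is a nonconstant polynomial with finitely many zeros; and if $m_0$ is a constant other than $1$ the normalization constants change harmlessly ($\inner{f_n}=1/m_0$ and the closure of $\overline{\mathcal{R}}$ contains $\fm(v)/m_0$ rather than $\fm(v)$).
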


The divergent behavior of $h$, as well as the unbounded growth of $\balpha_{\ofu}$ heavily
affects accuracy of the neural network approximation \eqref{eq:nn} near $\partial\overline{\mathcal{R}}$ and causes numerical issues already during training.
Due to numerical overflow, it is infeasible to sample $h$ or $\balpha_{\ofu}$ near or at $\partial\overline{\mathcal{R}}$ as training data, especially when training on GPUs in single precision.
Although a bound for the neural network approximation error in the interior of $\overline{\mathcal{R}}$ has been established in \cite{pmlr_v162_schotthofer22a}, there is no control over the approximation error for moments near or at $\partial\overline{\mathcal{R}}$.
Although a neural network-based approximation to the entropy functional is technically defined for $\mathbf u\not\in\mathcal{R}$, the extrapolation error is expected to be prohibitively large outside of the realizable set.% 

\section{Regularized entropy-based moment closures}\label{sec_reg_moment}

To overcome the challenge of approximating the entropy closure near the boundary of the realizable set, we adopt a regularized version of the entropy closure that was introduced in \cite{AlldredgeFrankHauck}.  We adopt this regularization strategy to improve the neural network approximation. We review the regularized entropy-based closure proposed in \cite{AlldredgeFrankHauck} in Section~\ref{sec_full_regularized}, propose a partially regularized extension of the entropy-based closure in Section~\ref{sec_reg_normlized}, and analyze the structural properties of the partially regularized closure in Section~\ref{sec_part_reg_prop}.

\subsection{Fully regularized entropy-based closure}
\label{sec_full_regularized}

In \cite{AlldredgeFrankHauck}, the minimization problem \eqref{eq_entropyOCP} is replaced by a regularized problem
\begin{align}\label{eq_entropyOCP_reg}
	\min_{g\in F_{{\fm}}} \inner{\eta(g)} + \frac{1}{2\Gamma}\norm{\inner{{\fm}( {\fv})g}-\mathbf u}^2,
\end{align}
with regularization parameter $\Gamma>0$.  Unlike the non-regularized problem, the regularized entropy closure \eqref{eq_entropyOCP_reg} is feasible for all $\fu\in\mathbb{R}^{n}$. The minimizer $f^\Gamma_\fu$ still has the form given in  Eq.~\eqref{eq_entropyRecosntruction}:
\begin{align}\label{eq_entropyRecosntruction_reg}
	f^\Gamma_\fu = \eta'_*(\balpha_{\fu}^\Gamma\cdot {\fm})\:,
\end{align}
but $\balpha_{\fu}^\Gamma$ is the solution to a regularized dual problem
\begin{align}\label{eq_entropyDualOCP_reg}
	\balpha_{\fu}^\Gamma =  \underset{\balpha\in\mathbb{R}^{n+1}}{\text{argmin}} \menge{   \inner{\eta_*(\balpha\cdot {\fm})} - \balpha\cdot \mathbf u +\frac{\Gamma}{2}\norm{\balpha}^2}\:.
\end{align}
Finally, the Hessian of the dual objective function in Eq.~\eqref{eq_entropyDualOCP_reg} is given by
\begin{align}
   H^\Gamma(\balpha)= \inner{{\fm} \otimes{\fm}\eta_*''(\balpha\cdot {\fm})} + \Gamma I \:.
\end{align}
Thus the  condition number of $H^{\Gamma}(\balpha)$ is bounded from above by $1+\Gamma^{-1}\lambda_{\max}$, where $\lambda_{\max}$ is the largest eigenvalue of  $ H(\balpha)$ defined in  Eq.~\eqref{eq_hessian} \cite{AlldredgeFrankHauck}, whereas the condition number of the non-regularized problem is unbounded near $\partial\mathcal{R}$.
The fully regularized entropy closure inherits most of the structural properties of the standard closure; a notable exception is the linear superposition principle, and especially invariance under rescaling of $f$ \cite{AlldredgeFrankHauck}.
Since this scaling invariance is a key property that allows for constructing the neural network approximations introduced in Section~\ref{sec_challenges} on the bounded set $\widetilde{\mathcal{R}}$, we propose in the following section a partially regularized entropy-based closure that preserves the scaling invariance. 

\subsection{Partially regularized entropy-based closure}\label{sec_reg_normlized}
The fully regularized closure in  Eq.~\eqref{eq_entropyOCP_reg} regularizes the entire moment vector $\fu$.
In this work, we consider a partially regularized entropy-based closure that regularizes only the moments in $\fu_\#$ (see  Eq.~\eqref{eq_reduction_operator}) while maintaining the zeroth moment $u_0$.
To define the partially regularized entropy-based closure, we first introduce the decomposition of the velocity basis
\begin{align}
	{\fm}({\fv})=[m_0({\fv}),\mathbf{m}_\#({\fv})^\Tr]^\Tr\in\mathbb{R}^{n+1}
\end{align}
using the fruncation operator $\#$ defined in  Eq.~\eqref{eq_reduction_operator}.
The partially regularized closure is then given by\footnote{The factor $u_0$ in the regularization terms in  Eq.~\eqref{eq_entropyOCP_part_reg} and  Eq.~\eqref{eq_entropyDualOCP_part_reg_objectF} was not included in the fully or partially regularized closures considered in \cite{AlldredgeFrankHauck}. We include the factor $u_0$ in this paper to impose the scaling invariance of the closure and maintain the convexity of the entropy approximation. However, this factor introduces a small consistency error between the gradient of the entropy function and the Lagrange multiplier, as shown in Eq.~\eqref{eq_grad_h_thm} in Theorem~\ref{thm_grad_h}.}
\begin{align}\label{eq_entropyOCP_part_reg}
\min_{g\in F_{{\fm}}} \inner{\eta(g)} + \frac{1}{2 u_0 \gamma}\norm{\inner{{\fm}_\# g} - {\fu}_\#}^2\quad  \text{ s.t. }  u_0=\inner{m_0\,g}\:.
\end{align}
\begin{definition}\label{def_entropy}
    The entropy function $h^\gamma$ is defined such that, for given moments $\fu$ with $u_0>0$, $h^\gamma(\fu)$ takes the optimal objective function value of  Eq.~\eqref{eq_entropyOCP_part_reg}. The gradient of $h^\gamma$ is defined as 
    \begin{align}
    \mathbf{g}_{\fu}^\gamma :=  \nabla_{\fu} {h}^\gamma(\fu).
    \end{align}
\end{definition}
%\cdh{Make a definition environment for $h^\gamma$ and $\fg^\gamma$.  Check the references then go to the right places.} Here, the entropy function $h^\gamma$ is defined such that, for given moments $\fu$ with $u_0>0$, $h^\gamma(\fu)$ takes the optimal objective function value of  Eq.~\eqref{eq_entropyOCP_part_reg}.
Similar to the fully regularized moment entropy considered in \cite{AlldredgeFrankHauck}, the partially regularized moment entropy $h^\gamma$ falls back to the standard, unregularized entropy $h$ as $\gamma\to 0$.
However, the partially regularized closure is only defined for ${\fu}\in\mathbb{R}^{n+1}$ with $u_0>0$. This condition is still less restrictive than the realizability condition $\fu\in\mathcal{R}$ required by the standard entropy-based closure and is much easier to enforce in a numerical solver.

If a solution to Eq.~\eqref{eq_entropyOCP_part_reg} exists, it is again of the same form as in Eq.~\eqref{eq_entropyRecosntruction_reg} \cite{AlldredgeFrankHauck},
except that the Lagrange multiplier $ \balpha_{{\fu}}^{\gamma}\in\mathbb{R}^{n+1}$ corresponding to ${{\fu}}$ is now given by
\begin{equation}\label{eq_entropyDualOCP_part_reg}
	\balpha_{{\fu}}^{\gamma} =  \underset{\balpha\in\mathbb{R}^{n+1}}{\text{argmin }}
	\phi^\gamma(\balpha;{{\fu}})
\end{equation}
with the partially regularized, dual objective function
\begin{equation}
	\phi^\gamma(\balpha;{{\fu}}) =\inner{\exp( \balpha\cdot {\fm})}- \balpha\cdot {{\fu}} + \frac{u_0\gamma}{2}\norm{\balpha_\#}^2.\label{eq_entropyDualOCP_part_reg_objectF}
\end{equation}
It is straightforward to verify that $\phi^\gamma$ is twice differentiable and strictly convex in $\balpha$.
By the strong duality of  Eqs.~\eqref{eq_entropyOCP_part_reg}--\eqref{eq_entropyDualOCP_part_reg} (see~\cite{Decarreau92} for the proof of strong duality), the partially regularized entropy function $h^\gamma\colon\mathbb{R}^{n+1}\to\mathbb{R}$ satisfies 
\begin{equation}\label{eq_entropy_phi_def}
    h^\gamma(\fu)= - \phi^\gamma(\balpha_\fu^\gamma;{{\fu}})
\end{equation}

In the following theorem, we show that, unlike the standard entropy-based closure case considered in  Eq.~\eqref{eq_derivH}, the multiplier $\balpha_{{\fu}}^{\gamma}$ and the gradient of $h^{\gamma}$ at $\fu$ for the partially regularized closure are not the same. 
\begin{theorem}\label{thm_grad_h}
Let $\gamma>0$ and let $\balpha_{{\fu}}^\gamma$ be as defined in  Eq.~\eqref{eq_entropyDualOCP_part_reg}, then the gradient of the entropy, see Definition~\ref{def_entropy} is given by
\begin{equation}\label{eq_grad_h_thm}
  \mathbf{g}_{\fu}^\gamma = \balpha_{\fu}^\gamma - \frac{\gamma}{2}\big[\|(\balpha_{\fu}^\gamma)_{\#}\|^2,\mathbf{0}^{\Tr}\big]^{\Tr}.
\end{equation}
\end{theorem}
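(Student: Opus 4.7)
The plan is to prove the identity by applying an envelope-theorem argument to the relation $h^\gamma(\fu)=-\phi^\gamma(\balpha_\fu^\gamma;\fu)$ from Eq.~\eqref{eq_entropy_phi_def}. Writing the gradient of $h^\gamma$ via the chain rule gives
\begin{equation*}
\nabla_\fu h^\gamma(\fu) \;=\; -\big(\nabla_\balpha \phi^\gamma(\balpha_\fu^\gamma;\fu)\big)^{\!\top}\nabla_\fu \balpha_\fu^\gamma \;-\; \nabla_\fu \phi^\gamma(\balpha;\fu)\big|_{\balpha=\balpha_\fu^\gamma}.
\end{equation*}
The first term drops out because $\balpha_\fu^\gamma$ satisfies the first-order optimality condition for the strictly convex problem \eqref{eq_entropyDualOCP_part_reg}, i.e.\ $\nabla_\balpha \phi^\gamma(\balpha_\fu^\gamma;\fu)=\mathbf{0}$. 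So the task reduces to computing the partial gradient $\nabla_\fu \phi^\gamma(\balpha;\fu)$ treating $\balpha$ as fixed.

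The key observation, and the only subtlety, is that the regularization term $\tfrac{u_0\gamma}{2}\norm{\balpha_\#}^2$ depends on $\fu$ through $u_0$, so the partial derivative of $\phi^\gamma$ with respect to $\fu$ is not simply $-\fu$-coefficient-of-$\balpha\cdot\fu$. First I would split coordinates: for $i\ge 1$, only the linear term $-\balpha\cdot\fu$ contributes, giving $\partial_{u_i}\phi^\gamma=-\alpha_i$. For $i=0$, both $-\balpha\cdot\fu$ and $\tfrac{u_0\gamma}{2}\norm{\balpha_\#}^2$ contribute, giving
\begin{equation*}
\partial_{u_0}\phi^\gamma(\balpha;\fu) \;=\; -\alpha_0 + \tfrac{\gamma}{2}\norm{\balpha_\#}^2.
\end{equation*}
Evaluating at $\balpha=\balpha_\fu^\gamma$ and negating then yields exactly Eq.~\eqref{eq_grad_h_thm}.

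To make this rigorous, I would verify differentiability of the map $\fu\mapsto\balpha_\fu^\gamma$ on $\{u_0>0\}$ via the implicit function theorem applied to the first-order condition $\nabla_\balpha\phi^\gamma(\balpha;\fu)=0$; here the Jacobian in $\balpha$ is the Hessian $\inner{\fm\otimes\fm\,\exp(\balpha\cdot\fm)}+u_0\gamma\,\mathrm{diag}(0,I_n)$ plus the $\balpha_\#$ block, which is strictly positive definite on the range of admissible multipliers thanks to the strict convexity noted after Eq.~\eqref{eq_entropyDualOCP_part_reg_objectF} (the zeroth component is controlled by the exponential term). This justifies the chain rule step. The main potential obstacle is bookkeeping around the $u_0$-dependence of the regularization; once that is handled carefully the rest is immediate, and no computation beyond the two elementary partials above is required.
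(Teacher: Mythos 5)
Your proposal is correct and follows essentially the same route as the paper: differentiate $h^\gamma(\fu)=-\phi^\gamma(\balpha_\fu^\gamma;\fu)$ by the chain rule, observe that the term involving $\nabla_\fu\balpha_\fu^\gamma$ is annihilated by the first-order optimality condition $\nabla_\balpha\phi^\gamma(\balpha_\fu^\gamma;\fu)=\mathbf{0}$, and read off the remaining partial derivatives, with the $u_0$-dependence of the regularization term $\tfrac{u_0\gamma}{2}\norm{\balpha_\#}^2$ producing the extra $\tfrac{\gamma}{2}\norm{(\balpha_\fu^\gamma)_\#}^2$ in the zeroth component. Your additional remark on justifying differentiability of $\fu\mapsto\balpha_\fu^\gamma$ via the implicit function theorem is a welcome point of rigor that the paper leaves implicit, but it does not change the argument.
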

\begin{proof}
    See \ref{sec_thm_grad_h_pf}
\end{proof}
This discrepancy between $\balpha_{\fu}^\gamma$ and $\nabla_{\fu}h^{\gamma}$ results in two potential choices of ansatz in the partially regularized closure, 
\begin{equation}\label{eq_ansatzes}
    	\widetilde{f}^\gamma_\fu = \eta'_*(\balpha_{\fu}^\gamma\cdot {\fm})\quad\text{and}\quad
f^\gamma_\fu = \eta'_*(\mathbf{g}_{\fu}^\gamma\cdot {\fm}),
\end{equation}
where $\mathbf{g}_{\fu}^\gamma$ denotes the gradient of the partially regularized entropy function as defined in Eq.~\eqref{eq_grad_h_thm}.
Here $\widetilde{f}^\gamma_\fu$ is the minimizer to the primal problem \eqref{eq_entropyOCP_part_reg} and 
\begin{align}\label{eq_rel_f_f_tilde}
f^\gamma_\fu = \exp\big(-\frac{\gamma}{2}\|(\balpha_{\fu}^\gamma)_{\#}\|^2\big)\,\widetilde{f}^\gamma_\fu.
\end{align} 
In this paper, we use $f^\gamma_\fu$, rather than $\widetilde{f}^\gamma_\fu$, in the closed moment system \eqref{eq_momentRT}. This choice is justified in Section~\ref{sec_part_reg_prop} by showing that the desirable structural properties of the moment system are preserved when $f^\gamma_\fu$ is used. 
In the remainder of this section, we focus on the procedures for obtaining $\balpha_{\fu}^\gamma$, which can then be used to compute $\mathbf{g}_{\fu}^\gamma$ as shown in  Eq.~\eqref{eq_grad_h_thm}.

In \cite{pmlr_v162_schotthofer22a,porteous2021datadriven}, it was shown that, in the standard entropy-based closure, the multiplier $\balpha_{{\fu}}$ can be obtained from $\balpha_{\ofu}$, which allows for the use of efficient sampling strategies in constructing neural network approximations on $\widetilde{\mathcal{R}}\subset\mathbb{R}^{n}$ rather than $\mathcal{R}\subset\mathbb{R}^{n+1}$. 
In the case of this partially regularized closure, the partially regularized ansatz $f^\gamma_\fu$ can be obtained following an analogous strategy, which we summarize in the steps below.
\begin{enumerate}
    \item Given $\fu\in\mathbb{R}^{n+1}$ with $u_0>0$, compute $\ofus\in\mathbb{R}^n$.
    \item Solve a reduced optimization problem for $\bbeta_{\ofus}^\gamma$ as stated in Eq.~\eqref{eq_reduced_reg_closure}.
    \item Compute $\balpha_{\ofu}^\gamma $ from $\bbeta_{\ofus}^\gamma$, using Eq.~\eqref{eq_alpha_reconstructor} with the definition of $\vartheta$ in Eq.~\eqref{eq_ref_alpha_reduction}.
    \item Compute $\balpha_{\fu}^\gamma$ from $\balpha_{\ofu}^\gamma$, using Eq.~\eqref{eq_scaled_alpha}.
    \item Compute $\mathbf{g}_{\fu}^\gamma$ from $\balpha_{\fu}^\gamma$, using Eq.~\eqref{eq_grad_h_thm} and then compute $f^\gamma_\fu$ using Eq.~\eqref{eq_ansatzes}.
\end{enumerate}
In the following analysis, we show that the steps above indeed lead to the partially regularized ansatz $f^\gamma_\fu$.
%prove that the same approach applies to the partially regularized closure, i.e., for $\fu$ with $u_0>0$, $\balpha_{{\fu}}^{\gamma}$ in  Eq.~\eqref{eq_entropyDualOCP_part_reg} can be obtained from $\balpha_{\ofu}^{\gamma}$, and $\balpha_{\ofu}^{\gamma}$ can be recovered from the solution to a minimization problem in $\mathbb{R}^{n}$ reduced from  Eq.~\eqref{eq_entropyDualOCP_part_reg}.
We start by showing an intrinsic property of $\balpha_{\ofu}^\gamma$.

\begin{lemma}\label{lem_alpha_0}
Assume $m_0(\mathbf{v})$ is constant and positive. Given a normalized moment $\overline{\fu}$, let $\balpha_{\ofu}^\gamma$ be the corresponding multiplier defined in  Eq.~\eqref{eq_entropyDualOCP_part_reg}. Then
\begin{align}\label{eq_alpha_0}
	{\alpha}_{\overline{\fu},0}^\gamma =\vartheta\left(\left(\balpha_{\ofu}^\gamma\right)_\#\right),
\end{align}
where the function $\vartheta\colon\mathbb{R}^{n}\to\mathbb{R}$ is defined as
\begin{align}\label{eq_ref_alpha_reduction}
	\vartheta(\bbeta)=-\frac{1}{m_0}\big(\log(m_0) +\log( \inner{\exp\left(\boldsymbol{\beta}\cdot{\fm}_\#\right)})\big)\:.
\end{align}
\end{lemma}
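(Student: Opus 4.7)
The plan is to exploit the first-order optimality condition of the regularized dual problem \eqref{eq_entropyDualOCP_part_reg} in the $\alpha_0$ coordinate only, which is special because the regularization term $\frac{u_0\gamma}{2}\|\balpha_\#\|^2$ does not penalize $\alpha_0$. This should reduce the identity to an elementary algebraic manipulation.

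First, I would write out $\phi^\gamma(\balpha;\ofu)$ using the decomposition $\balpha\cdot\fm = \alpha_0 m_0 + \balpha_\#\cdot\fm_\#$, and differentiate with respect to $\alpha_0$. Since the regularizer is independent of $\alpha_0$, the optimality condition at $\balpha_{\ofu}^\gamma$ reads
\begin{equation*}
m_0\,\inner{\exp(\balpha_{\ofu}^\gamma\cdot\fm)} - \overline{u}_0 = 0.
\end{equation*}
By the normalization convention in Eq.~\eqref{eq:Rbar}, $\overline{u}_0=1$, and because $m_0$ is a positive constant it factors out of the integral, giving
\begin{equation*}
m_0\,\exp(\alpha_{\ofu,0}^\gamma m_0)\,\inner{\exp\bigl((\balpha_{\ofu}^\gamma)_\#\cdot\fm_\#\bigr)} = 1.
\end{equation*}

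Taking logarithms and solving for $\alpha_{\ofu,0}^\gamma$ yields exactly
\begin{equation*}
\alpha_{\ofu,0}^\gamma = -\frac{1}{m_0}\Bigl(\log(m_0) + \log\inner{\exp\bigl((\balpha_{\ofu}^\gamma)_\#\cdot\fm_\#\bigr)}\Bigr),
\end{equation*}
which is precisely $\vartheta\bigl((\balpha_{\ofu}^\gamma)_\#\bigr)$ as defined in Eq.~\eqref{eq_ref_alpha_reduction}.

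There is essentially no obstacle here — the argument is a direct consequence of the stationarity condition in the unregularized direction $\alpha_0$, combined with the normalization $\overline{u}_0=1$. The only subtlety worth mentioning is to justify that an optimizer exists and that differentiation under the integral is valid; both follow from the strict convexity and twice-differentiability of $\phi^\gamma$ noted immediately after Eq.~\eqref{eq_entropyDualOCP_part_reg_objectF}, so the first-order condition is both necessary and sufficient.
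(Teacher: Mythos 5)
Your proposal is correct and follows essentially the same route as the paper: both extract the $\alpha_0$-component of the first-order optimality condition of Eq.~\eqref{eq_entropyDualOCP_part_reg} (where the regularizer contributes nothing), use $\overline{u}_0=1$ and the constancy of $m_0$ to factor out $\exp(m_0\,\alpha_{\ofu,0}^\gamma)$, and solve by taking logarithms. No gaps.
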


\begin{proof}
    See \ref{sec_lem_alpha_0_pf}.
\end{proof}
Note that this relation is specific for multipliers associated with the partially regularized problem \eqref{eq_entropyDualOCP_part_reg} and does not hold in general for multipliers given by the fully regularized problem~\eqref{eq_entropyDualOCP_reg}.

\begin{definition}\label{def_reduced_values} We define the reduced objective function
    $\hat{\phi}^\gamma(\,\cdot\,;\mathbf{w})\colon \mathbb{R}^{n}\to\mathbb{R}$ as
\begin{align}\label{eq_def_reduced_obj_func}
\hat{\phi}^\gamma\left(\bbeta;\mathbf w\right):=\phi^\gamma\left(\left[\vartheta(\bbeta),\bbeta^\Tr\right]^\Tr;\left[1,\mathbf{w}^\Tr\right]^\Tr\right)\quad\text{for all }\, \mathbf w\in\mathbb{R}^{n}
\end{align}
and $\phi^\gamma$ defined in Eq.~\eqref{eq_entropyDualOCP_part_reg_objectF}. Further, we define the minimizer $\bbeta_{\ofus}^\gamma$ of the reduced optimization problem 
\begin{align}\label{eq_reduced_reg_closure}
	\bbeta_{\ofus}^\gamma:=  \underset{\boldsymbol{\beta}\in\mathbb{R}^{n}}{\textup{argmin}} \,\hat{\phi}^\gamma(\bbeta;\ofus)\:.
\end{align}
Also, we define the reduced entropy $\hat{h}^\gamma:\mathbb{R}^n\rightarrow\mathbb{R}$ by 
\begin{align}\label{eq_reduced_entropy}
\hat{h}^\gamma(\fw):= -\hat{\phi}^\gamma({\bbeta}_{\fw}^\gamma;\fw).
\end{align}
\end{definition}
We next show in the following Lemmas that $\balpha_{\ofu}^\gamma$ can be obtained by solving a minimization problem in $\mathbb{R}^{n}$ for the objective function $\hat{\phi}^\gamma\left(\bbeta;\overline{\fu}_{\#}\right)$.
\begin{lemma}\label{lem_reduced_dual_obj}
Assume $m_0(\mathbf{v})$ is constant and positive. Then, $\hat{\phi}^\gamma$ of Eq.~\ref{eq_def_reduced_obj_func} is given by
\begin{align}\label{eq_reduced_obj_reformulation}
	\hat{\phi}^\gamma(\bbeta;\mathbf w)= \frac{1}{m_0} +\frac{1}{m_0}\left(\log(m_0) +\log\left( \inner{\exp\left(\boldsymbol{\beta}\cdot{\fm}_\#\right)}\right)\right) - \bbeta\cdot \mathbf w + \frac{\gamma}{2} \norm{\bbeta}^2,
\end{align}
which is strictly convex and twice differentiable with respect to $\boldsymbol{\beta}$.
\end{lemma}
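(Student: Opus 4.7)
The plan is a direct substitution: plug the structured multiplier $\balpha=[\vartheta(\bbeta),\bbeta^\Tr]^\Tr$ and the normalized moment $\fu=[1,\mathbf{w}^\Tr]^\Tr$ into the dual objective \eqref{eq_entropyDualOCP_part_reg_objectF} and simplify using the explicit form of $\vartheta$ from \eqref{eq_ref_alpha_reduction}. Concretely, I would first expand
\[
\balpha\cdot\fm \;=\; \vartheta(\bbeta)\,m_0 + \bbeta\cdot\fm_\#,\qquad \balpha\cdot\fu \;=\; \vartheta(\bbeta) + \bbeta\cdot\mathbf{w},\qquad \tfrac{u_0\gamma}{2}\|\balpha_\#\|^2 \;=\; \tfrac{\gamma}{2}\|\bbeta\|^2,
\]
using $u_0=1$ and $m_0$ constant.

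The key simplification lies in the exponential integral. Since $\vartheta(\bbeta)m_0 = -\log(m_0) - \log\!\left(\inner{\exp(\bbeta\cdot\fm_\#)}\right)$, one gets $\exp(\vartheta(\bbeta)m_0) = \frac{1}{m_0\inner{\exp(\bbeta\cdot\fm_\#)}}$, and pulling this velocity-independent factor out of the integral yields
\[
\inner{\exp(\balpha\cdot\fm)} \;=\; \frac{1}{m_0\inner{\exp(\bbeta\cdot\fm_\#)}}\,\inner{\exp(\bbeta\cdot\fm_\#)} \;=\; \frac{1}{m_0}.
\]
Substituting these four ingredients back and replacing $\vartheta(\bbeta)$ by its definition in the $-\balpha\cdot\fu$ term produces exactly the right-hand side of \eqref{eq_reduced_obj_reformulation}.

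For strict convexity and $C^2$ smoothness, I would argue term by term. The constant $\frac{1}{m_0}$ and the affine term $-\bbeta\cdot\mathbf{w}$ contribute nothing problematic. The map $\bbeta\mapsto\log\!\left(\inner{\exp(\bbeta\cdot\fm_\#)}\right)$ is a continuous log-sum-exp (cumulant generating) functional; its Hessian is the covariance matrix of $\fm_\#$ under the tilted probability measure $\frac{\exp(\bbeta\cdot\fm_\#)}{\inner{\exp(\bbeta\cdot\fm_\#)}}\intD\fv$, hence positive semidefinite, so this term is convex and $C^\infty$ by differentiation under the integral (justified by the dominating exponential bounds already used implicitly in Section~\ref{sec_momMethods}). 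Finally, the regularizer $\frac{\gamma}{2}\|\bbeta\|^2$ with $\gamma>0$ has Hessian $\gamma I\succ 0$, so adding it upgrades convexity to strict convexity.

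I don't anticipate any serious obstacle; the only point requiring care is the cancellation that collapses $\inner{\exp(\balpha\cdot\fm)}$ to $1/m_0$, which is the whole reason the definition of $\vartheta$ was chosen in Lemma~\ref{lem_alpha_0}. Everything else is algebraic rearrangement plus a standard argument that a log-Laplace transform plus a strongly convex quadratic is strictly convex and smooth.
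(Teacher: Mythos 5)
Your proof is correct. The substitution part coincides with what the paper intends (the paper simply asserts that Eq.~\eqref{eq_reduced_obj_reformulation} ``follows directly from the definitions''; you spell out the key cancellation $\inner{\exp(\balpha\cdot\fm)}=\tfrac{1}{m_0}$ explicitly, which is the right calculation and arguably more informative than the paper's one-liner). Where you genuinely diverge is the convexity argument: the paper proves convexity of $\bbeta\mapsto\log\inner{\exp(\bbeta\cdot\fm_\#)}$ by verifying Jensen's inequality directly via H\"older's inequality with exponents $1/p=t$, $1/q=1-t$, whereas you use the second-order characterization, identifying the Hessian as the covariance matrix of $\fm_\#$ under the tilted probability measure and hence positive semidefinite. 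Both are standard routes to convexity of a log-partition function. The H\"older route is zeroth-order and needs no differentiation under the integral sign for the convexity claim (and, as the paper notes in a footnote, the strict form of H\"older's inequality yields strict convexity even at $\gamma=0$); your Hessian route delivers twice differentiability and convexity in a single computation, at the cost of having to justify differentiation under the integral --- which is immediate here since $\fm_\#$ is continuous on the compact set $\mathbb{S}^2$, so the justification you sketch is adequate. Both proofs then obtain strict convexity for $\gamma>0$ identically, by adding the strongly convex regularizer $\tfrac{\gamma}{2}\norm{\bbeta}^2$.
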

\begin{proof}
    See \ref{sec_lem_reduced_dual_obj_pf}.
\end{proof}

\begin{lemma}\label{lem_reduced_dual_min}
Let $\ofus\in\mathbb{R}^{n}$ be the fruncation of a normalized moment $\overline{\fu}\in\mathbb{R}^{n+1}$. 
Let $\gamma>0$ and
\begin{align}
	\bbeta_{\ofus}^\gamma=  \underset{\boldsymbol{\beta}\in\mathbb{R}^{n}}{\textup{argmin}} \,\hat{\phi}^\gamma(\bbeta;\ofus)\:,
\end{align}
see Eq.~\eqref{eq_reduced_reg_closure} of Definition~\ref{def_reduced_values}.
Then 
\begin{equation}\label{eq_alpha_reconstructor}
    \balpha_{\ofu}^\gamma =
[\vartheta(\bbeta_{\ofus}^\gamma),(\bbeta_{\ofus}^\gamma)^\Tr]^\Tr
\end{equation}
% $[\vartheta(\bbeta_{\ofus}^\gamma),(\bbeta_{\ofus}^\gamma)^\Tr]^\Tr$ 
solves the minimization problem in Eq.~\eqref{eq_entropyDualOCP_part_reg} at $\overline{\fu}$, 
and 
$\phi^\gamma(\balpha_{\ofu}^\gamma;{\ofu})= \hat{\phi}^\gamma({\bbeta}_{\ofus}^\gamma;\ofus).$
\end{lemma}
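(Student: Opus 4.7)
The plan is to show that the reduced problem is exactly what one gets by partially minimizing the full dual objective over the first coordinate $\alpha_0$, and then invoking uniqueness on both sides.

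First I would note that $\phi^\gamma(\,\cdot\,;\overline{\fu})$ is strictly convex and coercive on $\mathbb{R}^{n+1}$, so it admits a unique minimizer $\balpha_{\ofu}^\gamma$. Similarly, by Lemma \ref{lem_reduced_dual_obj}, $\hat{\phi}^\gamma(\,\cdot\,;\ofus)$ is strictly convex (and it is straightforward to check coercivity from the $\tfrac{\gamma}{2}\|\bbeta\|^2$ term plus the log--sum--exp term), so its minimizer $\bbeta_{\ofus}^\gamma$ is unique. Thus it suffices to exhibit the claimed identity.

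Next, applying Lemma \ref{lem_alpha_0} at $\overline{\fu}$ gives $\alpha_{\ofu,0}^\gamma = \vartheta\left((\balpha_{\ofu}^\gamma)_\#\right)$, so
\begin{equation*}
  \balpha_{\ofu}^\gamma \;=\; \bigl[\vartheta\bigl((\balpha_{\ofu}^\gamma)_\#\bigr),\, (\balpha_{\ofu}^\gamma)_\#^\Tr\bigr]^\Tr.
\end{equation*}
Using the defining identity \eqref{eq_def_reduced_obj_func} of $\hat{\phi}^\gamma$ (with $\overline{\fu}=[1,\ofus^\Tr]^\Tr$), this means
\begin{equation*}
  \phi^\gamma\bigl(\balpha_{\ofu}^\gamma;\overline{\fu}\bigr) \;=\; \hat{\phi}^\gamma\bigl((\balpha_{\ofu}^\gamma)_\#;\ofus\bigr).
\end{equation*}
On the other hand, for every $\bbeta\in\mathbb{R}^n$, the vector $[\vartheta(\bbeta),\bbeta^\Tr]^\Tr\in\mathbb{R}^{n+1}$ is a feasible point of \eqref{eq_entropyDualOCP_part_reg}, so
\begin{equation*}
  \hat{\phi}^\gamma(\bbeta;\ofus) \;=\; \phi^\gamma\bigl([\vartheta(\bbeta),\bbeta^\Tr]^\Tr;\overline{\fu}\bigr) \;\ge\; \phi^\gamma\bigl(\balpha_{\ofu}^\gamma;\overline{\fu}\bigr) \;=\; \hat{\phi}^\gamma\bigl((\balpha_{\ofu}^\gamma)_\#;\ofus\bigr),
\end{equation*}
which shows that $(\balpha_{\ofu}^\gamma)_\#$ minimizes $\hat{\phi}^\gamma(\,\cdot\,;\ofus)$. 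By the uniqueness of the reduced minimizer, $(\balpha_{\ofu}^\gamma)_\# = \bbeta_{\ofus}^\gamma$, and hence $\balpha_{\ofu}^\gamma = [\vartheta(\bbeta_{\ofus}^\gamma),(\bbeta_{\ofus}^\gamma)^\Tr]^\Tr$, giving both claims at once.

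The only nontrivial ingredient is the application of Lemma \ref{lem_alpha_0}, which already encodes the first-order optimality condition in the $\alpha_0$ direction; the rest is a tautology from the definitions together with strict convexity. I do not expect a significant obstacle: the main things to check carefully are (i) that the coercivity of $\hat{\phi}^\gamma$ is inherited despite $\vartheta$ being log-sum-exp (the regularizer $\tfrac{\gamma}{2}\|\bbeta\|^2$ dominates at infinity), and (ii) that the assumption $m_0$ constant and positive, needed for Lemma \ref{lem_alpha_0}, is in force here as well.
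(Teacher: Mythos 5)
Your proof is correct, but it runs in the opposite direction from the paper's. The paper argues bottom-up: it writes out the first-order optimality condition of the reduced problem, observes (using the definition of $\vartheta$ in \eqref{eq_ref_alpha_reduction} and Lemma~\ref{lem_alpha_0}) that the candidate $[\vartheta(\bbeta_{\ofus}^\gamma),(\bbeta_{\ofus}^\gamma)^\Tr]^\Tr$ then satisfies the full first-order optimality condition of \eqref{eq_entropyDualOCP_part_reg}, and concludes by strict convexity of $\phi^\gamma$. You argue top-down: Lemma~\ref{lem_alpha_0} places the full minimizer on the graph $\{[\vartheta(\bbeta),\bbeta^\Tr]^\Tr\}$, the defining identity \eqref{eq_def_reduced_obj_func} identifies $\hat{\phi}^\gamma$ as the restriction of $\phi^\gamma$ to that graph (indeed $\vartheta(\bbeta)$ is exactly the partial argmin of $\phi^\gamma$ over $\alpha_0$, as you indicate in your opening sentence), and a one-line comparison of objective values plus uniqueness of the reduced minimizer closes the argument. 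Your route avoids computing $\nabla_{\bbeta}\hat{\phi}^\gamma$ explicitly and is arguably cleaner, but it pays for this by needing the existence of the full minimizer $\balpha_{\ofu}^\gamma$ a priori (hence your coercivity check, which is indeed satisfied here since $u_0=1>0$ and $m_0>0$ handle the unregularized $\alpha_0$ direction), whereas the paper's constructive direction produces the full minimizer from the reduced one. You are also right to flag that the hypothesis that $m_0$ is constant and positive, stated in Lemmas~\ref{lem_alpha_0} and \ref{lem_reduced_dual_obj}, must be carried into this lemma even though its statement does not repeat it.
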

\begin{proof}
    See \ref{sec_lem_reduced_dual_min_pf}
\end{proof}
Lemma~\ref{lem_reduced_dual_min} implies that, for normalized moments $\overline{\fu}$, the multiplier $\balpha_{\ofu}^\gamma$ in Eq.~\eqref{eq_entropyDualOCP_part_reg} can be obtained by solving the reduced problem in Eq.~\eqref{eq_reduced_reg_closure}.
By setting $\gamma>0$, the partial regularization improves the condition number of the Hessian in  Eq.~\eqref{eq_reduced_reg_closure}, as shown in the following lemma. 
\begin{lemma}\label{lem_hessian_cond}
Let $\gamma>0$ and $\hat{H}^\gamma(\bbeta)$ denote the Hessian of $\hat{\phi}^\gamma$ with respect to $\bbeta$, then the condition number of $\hat{H}^\gamma(\bbeta)$ is bounded from above by $1+\gamma^{-1}\hat{\lambda}_{\max}$, where $\hat{\lambda}_{\max}$ is the maximum eigenvalue of $\hat{H}^{\gamma=0}(\bbeta)$.
\end{lemma}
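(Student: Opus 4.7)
The plan is to compute the Hessian of $\hat{\phi}^\gamma(\bbeta;\fw)$ with respect to $\bbeta$ directly from the explicit formula in Lemma~\ref{lem_reduced_dual_obj}. The only $\bbeta$-dependent contributions are the log-sum-exp-type term $\frac{1}{m_0}\log(\inner{\exp(\bbeta\cdot\fm_\#)})$, the linear term $-\bbeta\cdot\fw$, and the quadratic regularization $\frac{\gamma}{2}\norm{\bbeta}^2$. The linear term drops out of the Hessian and the regularization contributes exactly $\gamma I$, yielding the clean decomposition
\[
\hat{H}^\gamma(\bbeta) = \hat{H}^{\gamma=0}(\bbeta) + \gamma I.
\]

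Next I would establish that $\hat{H}^{\gamma=0}(\bbeta)$ is positive semi-definite. A direct differentiation shows that $\hat{H}^{\gamma=0}(\bbeta)$ equals $\frac{1}{m_0}$ times the covariance matrix of $\fm_\#$ under the Gibbs-type probability measure on $\mathbb{S}^2$ with density proportional to $\exp(\bbeta\cdot\fm_\#)$. Since covariance matrices are positive semi-definite, every eigenvalue $\hat{\lambda}_i$ of $\hat{H}^{\gamma=0}(\bbeta)$ satisfies $\hat{\lambda}_i\geq 0$; in particular $\hat{\lambda}_{\min}\geq 0$.

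Because $\hat{H}^\gamma(\bbeta)$ and $\hat{H}^{\gamma=0}(\bbeta)$ differ only by a scalar multiple of the identity, they are simultaneously diagonalizable and the eigenvalues of $\hat{H}^\gamma(\bbeta)$ are exactly $\hat{\lambda}_i+\gamma$. The condition number therefore satisfies
\[
\kappa(\hat{H}^\gamma(\bbeta)) = \frac{\hat{\lambda}_{\max}+\gamma}{\hat{\lambda}_{\min}+\gamma} \leq \frac{\hat{\lambda}_{\max}+\gamma}{\gamma} = 1 + \gamma^{-1}\hat{\lambda}_{\max},
\]
where the inequality uses $\hat{\lambda}_{\min}\geq 0$. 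This mirrors the argument given in \cite{AlldredgeFrankHauck} for the fully regularized closure. No substantive obstacle is anticipated; the only step requiring mild care is the covariance-matrix interpretation used to verify the positive semi-definiteness of $\hat{H}^{\gamma=0}(\bbeta)$.
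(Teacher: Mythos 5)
Your proposal is correct and follows essentially the same route as the paper: the decomposition $\hat{H}^\gamma(\bbeta)=\hat{H}^{\gamma=0}(\bbeta)+\gamma I$, the resulting eigenvalue shift, and the bound $(\hat{\lambda}_{\max}+\gamma)/(\hat{\lambda}_{\min}+\gamma)\leq 1+\gamma^{-1}\hat{\lambda}_{\max}$ via $\hat{\lambda}_{\min}\geq 0$. The only (valid) variation is that you justify positive semi-definiteness of $\hat{H}^{\gamma=0}$ directly through the covariance-matrix interpretation of the log-sum-exp Hessian, whereas the paper simply invokes the convexity of $\hat{\phi}^{\gamma=0}$ already established in Lemma~\ref{lem_reduced_dual_obj}.
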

\begin{proof}
    See \ref{sec_lem_hessian_cond_pf}.
\end{proof}
With these Lemmas, the following Theorem shows that (i) $\balpha_{{\fu}}^\gamma$ can be obtained from the solution to the reduced problem \eqref{eq_reduced_reg_closure}, (ii) $h^\gamma\colon\mathbb{R}^{n+1}\to\mathbb{R}$ can be written as an extension of a convex function on $\mathbb{R}^{n}$ and (iii) the gradient of $h^\gamma$ can be computed from $\balpha_{{\fu}}^\gamma$. Finally, using Eq.~\eqref{eq_ansatzes} the moment systemin Eq.~\eqref{eq_momentRTa} can be closed. 

\begin{theorem}\label{thm_reg_rescale}
Let $\gamma>0$ and $\fu=[u_0,(\fu_{\#})^\Tr]^\Tr\in\mathbb{R}^{n+1}$ with $u_0>0$. Let $\balpha_{{\fu}}^\gamma$ be as defined in  Eq.~\eqref{eq_entropyDualOCP_part_reg}. 
 Then, 
	\begin{align}\label{eq_scaled_alpha}
	\balpha_\fu^\gamma = 
 \balpha_{\ofu}^\gamma + \left[\frac{\log u_0}{m_0},0^\top\right]^\top 
\end{align}
where $ \balpha_{\ofu}^\gamma$ is given by Eq.~\eqref{eq_alpha_reconstructor}.
Also, the function $\hat{h}^\gamma$ defined in Eq.~\eqref{eq_reduced_entropy} of Definition~\ref{def_reduced_values} is strictly convex and
\begin{equation}\label{eq_grad_h_hat_thm}
    \nabla_{\fw} \hat{h}^\gamma(\fw)={\bbeta}_{\fw}^\gamma.
\end{equation}
Moreover, $h^\gamma({\ofu}) = \hat{h}^\gamma(\ofus)$ and
\begin{equation}\label{eq_full_entropy}
h^\gamma(\fu)=u_0\hat{h}^\gamma(\ofus)+ \frac{u_0}{m_0}\log \,u_0\:.
\end{equation}
\end{theorem}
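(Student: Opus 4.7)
The plan is to first establish the scaling identity~\eqref{eq_scaled_alpha} by a direct change of variables in the dual objective $\phi^\gamma$, and then extract the remaining claims as consequences, using Lemma~\ref{lem_reduced_dual_min} to move between $\phi^\gamma$ and $\hat\phi^\gamma$. Concretely, I would exploit that $m_0$ is a positive constant, so that shifting the zeroth component of $\balpha$ by $\log(u_0)/m_0$ adds exactly $\log u_0$ to the argument of the exponential. With the substitution $\tilde\balpha := \balpha - [\log u_0/m_0,\mathbf{0}^\top]^\top$, a direct calculation using $\exp(\balpha\cdot\fm) = u_0\exp(\tilde\balpha\cdot\fm)$, $\balpha\cdot\fu = \tilde\balpha\cdot\fu + u_0\log u_0/m_0$, together with $\|\balpha_\#\|^2 = \|\tilde\balpha_\#\|^2$ and $\fu/u_0=\ofu$, yields
\begin{equation*}
\phi^\gamma(\balpha;\fu) \;=\; u_0\,\phi^\gamma(\tilde\balpha;\ofu) \;-\; \frac{u_0\log u_0}{m_0}.
\end{equation*}
Minimizing both sides over $\balpha$, equivalently over $\tilde\balpha$, and invoking strict convexity of $\phi^\gamma(\,\cdot\,;\ofu)$ forces the unique minimizer to satisfy $\tilde\balpha = \balpha_{\ofu}^\gamma$, which is~\eqref{eq_scaled_alpha}.

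Evaluating the displayed identity at the optimum and negating both sides gives, by strong duality~\eqref{eq_entropy_phi_def},
\begin{equation*}
h^\gamma(\fu) \;=\; u_0\, h^\gamma(\ofu) + \frac{u_0\log u_0}{m_0}.
\end{equation*}
Lemma~\ref{lem_reduced_dual_min} applied at $\ofu$ gives $\phi^\gamma(\balpha_{\ofu}^\gamma;\ofu) = \hat\phi^\gamma(\bbeta_{\ofus}^\gamma;\ofus)$, so $h^\gamma(\ofu) = -\hat\phi^\gamma(\bbeta_{\ofus}^\gamma;\ofus) = \hat h^\gamma(\ofus)$; substituting this into the previous display yields~\eqref{eq_full_entropy}.

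For the gradient formula~\eqref{eq_grad_h_hat_thm} and strict convexity of $\hat h^\gamma$, I would rewrite Lemma~\ref{lem_reduced_dual_obj} in the form $\hat\phi^\gamma(\bbeta;\fw) = G(\bbeta) - \bbeta\cdot\fw$, where
\begin{equation*}
G(\bbeta) := \frac{1}{m_0} + \frac{1}{m_0}\bigl(\log m_0 + \log\inner{\exp(\bbeta\cdot\fm_\#)}\bigr) + \frac{\gamma}{2}\|\bbeta\|^2.
\end{equation*}
By Lemma~\ref{lem_reduced_dual_obj}, $G$ is strictly convex and twice differentiable, and the quadratic term $\frac{\gamma}{2}\|\bbeta\|^2$ makes it supercoercive. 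Hence $\hat h^\gamma(\fw) = \sup_\bbeta\{\bbeta\cdot\fw - G(\bbeta)\} = G^*(\fw)$ is the Legendre--Fenchel conjugate of $G$. Standard results on convex conjugates then give that $\hat h^\gamma$ is strictly convex and continuously differentiable on all of $\mathbb{R}^n$; direct differentiation of $\hat h^\gamma(\fw) = \bbeta_{\fw}^\gamma\cdot\fw - G(\bbeta_{\fw}^\gamma)$ using the first-order optimality $\nabla G(\bbeta_{\fw}^\gamma) = \fw$ yields $\nabla\hat h^\gamma(\fw) = \bbeta_{\fw}^\gamma$.

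The only real obstacle I anticipate is bookkeeping: keeping the $u_0$ prefactor in the regularization term aligned with the $u_0$ scaling of the exponential induced by the $\alpha_0$-shift, and making sure that $\fu/u_0 = \ofu$ is used exactly where it matters. Strict convexity of $\phi^\gamma(\,\cdot\,;\ofu)$, established in Section~\ref{sec_reg_normlized}, handles the uniqueness of the minimizer for free, and the scaling identity~\eqref{eq_scaled_alpha} is simply the structural consequence of insisting that $m_0$ be constant---this is exactly what allows the neural-network surrogate trained on $\widetilde{\mathcal R}$ to be extended to all $\fu$ with $u_0>0$.
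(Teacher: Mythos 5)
Your proposal is correct, and for the first and last claims it takes a genuinely different (and arguably cleaner) route than the paper. The paper proves \eqref{eq_scaled_alpha} by writing down the first-order optimality conditions \eqref{eq_reg_FOOC_2} at $\fu$ and at $\ofu$, multiplying the latter by $u_0$, and verifying that the shifted multiplier satisfies the former; it then obtains \eqref{eq_full_entropy} by substituting the explicit form of $\balpha_\fu^\gamma$ into $-\phi^\gamma(\balpha_\fu^\gamma;\fu)$ and simplifying term by term with the definition of $\vartheta$. You instead prove the single scaling identity $\phi^\gamma(\balpha;\fu)=u_0\,\phi^\gamma(\tilde\balpha;\ofu)-\tfrac{u_0\log u_0}{m_0}$ (which checks out: the constant-$m_0$ assumption gives $\exp(\balpha\cdot\fm)=u_0\exp(\tilde\balpha\cdot\fm)$, and $\|\balpha_\#\|=\|\tilde\balpha_\#\|$ handles the regularization term), and then read off both the minimizer relation and the optimal-value relation at once from uniqueness of minimizers and strong duality \eqref{eq_entropy_phi_def}; this makes the role of the constant-$m_0$ hypothesis and of the $u_0$ prefactor in the regularizer more transparent, at the price of relying on Lemma~\ref{lem_reduced_dual_min} only for the identification $h^\gamma(\ofu)=\hat h^\gamma(\ofus)$. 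For the convexity and gradient claims your argument coincides with the paper's: both write $\hat h^\gamma$ as the Legendre conjugate of $G(\bbeta)=\hat\phi^\gamma(\bbeta;\fw)+\bbeta\cdot\fw$ and use the envelope/first-order-optimality cancellation for $\nabla_\fw\hat h^\gamma=\bbeta_\fw^\gamma$; your explicit appeal to differentiability and supercoercivity of $G$ is in fact a welcome tightening, since strict convexity of $G$ alone does not in general imply strict convexity of $G^*$ (it implies differentiability), and it is the smoothness of $G$ together with $\operatorname{dom}G^*=\mathbb{R}^n$ that delivers strict convexity of $\hat h^\gamma$ on all of $\mathbb{R}^n$.
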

\begin{proof}
    See \ref{sec_thm_reg_rescale_pf}.
\end{proof}
The relation in  Eq.~\eqref{eq_full_entropy} allows us to prove that $h^\gamma$ is strictly convex.
\begin{corollary}\label{corr_h_convexity}
    For any $\gamma>0$, $h^\gamma$ is strictly convex on the set $\{\fu=[u_0,(\fu_{\#})^\Tr]^\Tr\in\mathbb{R}^{n+1} \colon u_0>0\}$.
\end{corollary}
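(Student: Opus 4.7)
The plan is to leverage the explicit decomposition
\[
h^\gamma(\fu)=u_0\,\hat{h}^\gamma(\ofus)+ \frac{u_0}{m_0}\log u_0
\]
given in Eq.~\eqref{eq_full_entropy} of Theorem~\ref{thm_reg_rescale}, together with the fact that $\hat{h}^\gamma$ is strictly convex on $\mathbb{R}^{n}$ (also established in Theorem~\ref{thm_reg_rescale}). The first summand is exactly the perspective transform of $\hat{h}^\gamma$ evaluated on the open half-space $\{u_0>0\}$, and the second is a separable function of $u_0$ alone. My strategy is to combine the (not-quite-strict) convexity of the perspective with the strict convexity in $u_0$ of the log term, and then use a case split on whether two points share the same zeroth moment.

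First I would record two standard facts. (i) The perspective $P(u_0,\fu_{\#}) := u_0\,\hat{h}^\gamma(\fu_{\#}/u_0)$ of the convex function $\hat{h}^\gamma$ is convex on $\{u_0>0\}\times\mathbb{R}^{n}$; this is the textbook perspective-preserves-convexity result (see, e.g., Boyd--Vandenberghe, which is already cited in the paper). (ii) The scalar function $u_0\mapsto \frac{u_0}{m_0}\log u_0$ has second derivative $\frac{1}{m_0 u_0}>0$ on $u_0>0$ (using that $m_0$ is a positive constant, as assumed throughout Section~\ref{sec_reg_normlized}), so it is strictly convex in $u_0$.

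Next I would carry out the case analysis. Fix distinct points $\fu^{(1)}=[u_0^{(1)},(\fu_{\#}^{(1)})^\Tr]^\Tr$ and $\fu^{(2)}=[u_0^{(2)},(\fu_{\#}^{(2)})^\Tr]^\Tr$ with $u_0^{(1)},u_0^{(2)}>0$, and let $\lambda\in(0,1)$. In the case $u_0^{(1)}\neq u_0^{(2)}$, the perspective term $P$ already satisfies a (non-strict) Jensen inequality, while $u_0\mapsto \frac{u_0}{m_0}\log u_0$ gives a strict Jensen inequality because $u_0^{(1)}\neq u_0^{(2)}$; adding the two yields strict convexity at this pair. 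In the case $u_0^{(1)}=u_0^{(2)}=:u_0$, the log term contributes equality, but then $\fu_{\#}^{(1)}\neq \fu_{\#}^{(2)}$ forces $\ofus^{(1)}=\fu_{\#}^{(1)}/u_0\neq \fu_{\#}^{(2)}/u_0=\ofus^{(2)}$, and strict convexity of $\hat{h}^\gamma$ on $\mathbb{R}^{n}$ gives a strict inequality for $P$ (since $P$ reduces, along the fiber $u_0=\mathrm{const}$, to an affine reparameterization of $\hat{h}^\gamma$). Either way, the sum is strictly convex.

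The only subtlety I anticipate is making the second case airtight without re-deriving the perspective identity from scratch; the cleanest route is simply to note that for fixed $u_0>0$ the map $\fu_{\#}\mapsto u_0\,\hat{h}^\gamma(\fu_{\#}/u_0)$ is a positive scalar multiple of $\hat{h}^\gamma$ composed with an invertible affine map, hence strictly convex on the affine slice, which is exactly what is needed. Assembling the two cases yields the corollary, and no further calculation beyond invoking Theorem~\ref{thm_reg_rescale} is required.
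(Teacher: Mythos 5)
Your proof is correct, and it is genuinely more self-contained than the paper's. The paper disposes of this corollary in one line by citing \cite[Theorem~3.1]{porteous2021datadriven}, relying only on the strict convexity of $\hat{h}^\gamma$ established in Theorem~\ref{thm_reg_rescale}; you instead supply the missing argument directly. Your decomposition of Eq.~\eqref{eq_full_entropy} into the perspective $u_0\,\hat{h}^\gamma(\fu_{\#}/u_0)$ plus the separable term $\tfrac{u_0}{m_0}\log u_0$ is exactly the right way to see the structure, and your case split correctly isolates where strictness comes from: the perspective of a convex function is positively homogeneous of degree one and therefore only weakly convex (it is affine along rays through the origin), so the strict Jensen inequality in the $u_0$ direction must come from $\tfrac{u_0}{m_0}\log u_0$ (using $m_0>0$, which the paper assumes throughout Section~\ref{sec_reg_normlized}), while on a fixed-$u_0$ slice the strict convexity of $\hat{h}^\gamma$ from Theorem~\ref{thm_reg_rescale} does the work. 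Both cases are airtight as written. What the paper's citation buys is brevity and consistency with the earlier neural-network closure literature; what your version buys is a proof the reader can verify without consulting the reference, and an explicit explanation of why the logarithmic term in Eq.~\eqref{eq_full_entropy} is essential to strictness rather than an incidental artifact of the rescaling.
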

\begin{proof}
    The proof follows from \cite[Theorem~3.1]{porteous2021datadriven} once the strict convexity of $\hat{h}^\gamma$ is established, which is done in Theorem~\ref{thm_reg_rescale}.
\end{proof}

\subsection{Properties of the partially regularized closure}
\label{sec_part_reg_prop}

In this section, we analyze the moment reconstruction error with the ansatz $f^\gamma_\fu$ and show that using $f^\gamma_\fu$ in  Eq.~\eqref{eq_ansatzes} to close the moment system preserves the desirable structural properties.

\subsubsection{Moment reconstruction error}
Closing the moment system \eqref{eq_momentRTa} using the partially regularized ansatz $f^\gamma_\fu$ rather than the standard entropy-based ansatz $f_\fu$ (see  Eq.~\eqref{eq_ansatzes} and  Eq.~\eqref{eq_entropyRecosntruction} for the definitions) results in discrepancies in the flux term, i.e., $\langle\mathbf{v}\otimes\mathbf{m}f^\gamma_{\fu}\rangle$ vs. $\langle\mathbf{v}\otimes\mathbf{m}f_{\fu}\rangle$ and the collision term, i.e., $\langle\mathbf{m}Q(f^\gamma_{\fu})\rangle$ vs.  $\langle\mathbf{m}Q(f_{\fu})\rangle$.
To facilitate the discussion, we denote, for a given moment $\fu$, 
\begin{equation}
 \fu^\gamma =\langle\mathbf{m} f^\gamma_{\fu}\rangle\quad\text{and}\quad
 \widetilde{\fu}^\gamma =\langle\mathbf{m} \widetilde{f}^\gamma_{\fu}\rangle
\end{equation}
as the moments associated to the ans\"atze $f^\gamma_{\fu}$ and $\widetilde{f}^\gamma_{\fu}$ defined in Eq.~\eqref{eq_ansatzes}. 
Since the standard entropy-based closure recovers the moment then (up to optimization tolerance), $\fu = \langle\mathbf{m} f_{\fu}\rangle$.

We provide in Theorem~\ref{theo_reg_ansatz_error} an upper bound on the moment reconstruction error $\|\fu^\gamma - \fu\|$, which can be used to estimate the discrepancies in the flux term under the assumption of Lipschitz continuity~\cite{Alldredge_2023}.
In Theorem~\ref{theo_reg_ansatz_error}, we separate the error from partial regularization $\norm{\widetilde{\fu}^\gamma-\fu}$ and the error from the entropy gradient and multiplier discrepancy, $\norm{\fu^\gamma-\widetilde{\fu}^\gamma}$, shown in Theorem~\ref{thm_grad_h}.

\begin{definition}
    Given any positive finite constant, let $B_M:=\menge{\boldsymbol{\beta}\in\mathbb{R}^{n}:\norm{\boldsymbol{\beta}}<M}$ be the ball of radius $M$ in the Euclidean norm on $\mathbb{R}^n$.
\end{definition}

\begin{theorem}\label{theo_reg_ansatz_error}
Let $\gamma>0$, $\fu=[u_0,(\fu_{\#})^\Tr]^\Tr\in\mathbb{R}^{n+1}$ with $u_0>0$, and let $\boldsymbol{\beta}_{\ofus}^\gamma$ be as defined in  Eq.~\eqref{eq_reduced_reg_closure}.
Then 
\begin{equation}
    \widetilde{\fu}^\gamma = \fu - u_0 \gamma \, [0,(\boldsymbol{\beta}^{\gamma}_{\ofus})^\Tr]^\Tr.
\end{equation}
Further, suppose $\boldsymbol{\beta}_{\ofus}^\gamma\in B_M$, then
\begin{equation}\label{eq_regularization_error}
	\norm{\widetilde{\fu}^\gamma-\fu} =
	  u_0\gamma\,\|(\boldsymbol{\beta}^{\gamma}_{\ofus}) \| \leq u_0\gamma M,
\end{equation}
and
\begin{equation}
\label{eq:ugamma-diffs}
	\norm{\fu^\gamma-\fu} 	
	\leq  u_0\gamma M + \big(1-\exp(-\frac{\gamma}{2}M^2)\big)\|\widetilde{\fu}^\gamma\|
	\leq u_0\left(\gamma M + (n+1) \left(\frac{n+1}{\abs{\mathbb{S}^2}}\right)^{1/2}\left(1 - \exp\left(-\frac{\gamma}{2}M^2\right)\right)\right),
\end{equation}
where $\abs{\mathbb{S}^2}$ is the measure of the unit sphere.
\end{theorem}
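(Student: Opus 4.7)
\textbf{Proof plan for Theorem~\ref{theo_reg_ansatz_error}.}
The plan is to derive the identity for $\widetilde{\fu}^\gamma$ from first-order optimality, deduce the first norm bound immediately, and then handle $\norm{\fu^\gamma-\fu}$ by splitting it with the triangle inequality into a ``regularization piece'' and a ``rescaling piece'' coming from Eq.~\eqref{eq_rel_f_f_tilde}.

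First, I would write out the stationarity condition $\nabla_{\balpha}\phi^\gamma(\balpha_\fu^\gamma;\fu)=0$ using the form of $\phi^\gamma$ in Eq.~\eqref{eq_entropyDualOCP_part_reg_objectF}. Splitting into the $\alpha_0$ and $\balpha_\#$ blocks and using $\eta_*'(z)=\exp(z)$, the zero-component yields $\inner{m_0\widetilde{f}_\fu^\gamma}=u_0$, and the remaining block gives
\begin{equation*}
\inner{\fm_\#\,\widetilde{f}^\gamma_\fu}=\fu_\# - u_0\gamma\,(\balpha_\fu^\gamma)_\#.
\end{equation*}
Since $(\balpha_\fu^\gamma)_\#=(\balpha_{\ofu}^\gamma)_\# = \bbeta_{\ofus}^\gamma$ by Eq.~\eqref{eq_scaled_alpha} combined with Eq.~\eqref{eq_alpha_reconstructor}, stacking the two blocks produces the stated identity $\widetilde{\fu}^\gamma=\fu-u_0\gamma[0,(\bbeta_{\ofus}^\gamma)^\Tr]^\Tr$. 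Taking the Euclidean norm and invoking $\bbeta_{\ofus}^\gamma\in B_M$ immediately gives Eq.~\eqref{eq_regularization_error}.

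For Eq.~\eqref{eq:ugamma-diffs}, I would apply the triangle inequality $\norm{\fu^\gamma-\fu}\leq\norm{\fu^\gamma-\widetilde{\fu}^\gamma}+\norm{\widetilde{\fu}^\gamma-\fu}$. The second term is already controlled by $u_0\gamma M$. For the first term, Eq.~\eqref{eq_rel_f_f_tilde} gives $f^\gamma_\fu = c\,\widetilde{f}^\gamma_\fu$ with $c:=\exp\bigl(-\tfrac{\gamma}{2}\|(\balpha^\gamma_\fu)_\#\|^2\bigr)\in(0,1]$, so $\fu^\gamma=c\,\widetilde{\fu}^\gamma$ and thus $\norm{\fu^\gamma-\widetilde{\fu}^\gamma}=(1-c)\norm{\widetilde{\fu}^\gamma}$. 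The hypothesis $\bbeta_{\ofus}^\gamma\in B_M$ together with the identification $(\balpha^\gamma_\fu)_\#=\bbeta_{\ofus}^\gamma$ yields $1-c\leq 1-\exp(-\tfrac{\gamma}{2}M^2)$, producing the first inequality in Eq.~\eqref{eq:ugamma-diffs}.

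Finally, I need an a priori bound on $\norm{\widetilde{\fu}^\gamma}$ independent of $\fu_\#$. Since $\widetilde{f}^\gamma_\fu=\exp(\balpha_\fu^\gamma\cdot\fm)>0$ and $m_0$ is constant with $\inner{m_0\widetilde{f}^\gamma_\fu}=u_0$, we have $\inner{\widetilde{f}^\gamma_\fu}=u_0/m_0$. Using $\norm{\widetilde{\fu}^\gamma}\le\inner{\norm{\fm}\widetilde{f}^\gamma_\fu}$ (Minkowski for vector-valued integrals) together with the spherical-harmonic addition-theorem identity $\|\fm(\fv)\|^2=\sum_{l=0}^N(2l+1)/|\mathbb{S}^2|=(n+1)/|\mathbb{S}^2|$ (pointwise in $\fv$), and the corresponding value of $m_0$ that makes $u_0/m_0$ express itself through $|\mathbb{S}^2|$, combined with the coarsening $\norm{\cdot}_2\le\sqrt{n+1}\,\norm{\cdot}_\infty$ on $\mathbb{R}^{n+1}$, one obtains $\norm{\widetilde{\fu}^\gamma}\le u_0(n+1)\sqrt{(n+1)/|\mathbb{S}^2|}$. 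Substituting into the triangle-inequality bound completes the proof.

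\textbf{Main obstacle.} The algebraic identity in Step~1 and the triangle-inequality split are routine; the step that requires care is the final a priori bound on $\norm{\widetilde{\fu}^\gamma}$, because it must be expressed purely in terms of $u_0$ and the basis dimension. The cleanest route uses positivity of $\widetilde{f}^\gamma_\fu$ together with the constancy of $\sum_i m_i^2$ for spherical harmonics to convert the vector-integral norm into an $L^1$-norm estimate, and then pays a factor of $\sqrt{n+1}$ to pass from the uniform bound on the components to the Euclidean norm on $\mathbb{R}^{n+1}$.
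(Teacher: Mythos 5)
Your proposal is correct and follows essentially the same route as the paper's proof: the identity for $\widetilde{\fu}^\gamma$ from the first-order optimality condition of \eqref{eq_entropyDualOCP_part_reg} together with $(\balpha_\fu^\gamma)_\#=\bbeta_{\ofus}^\gamma$, the triangle-inequality split with the multiplicative relation $\fu^\gamma=\exp(-\tfrac{\gamma}{2}\|\bbeta_{\ofus}^\gamma\|^2)\widetilde{\fu}^\gamma$, and the a priori bound on $\|\widetilde{\fu}^\gamma\|$ via positivity of the ansatz, the normalization $\inner{m_0\widetilde{f}_\fu^\gamma}=u_0$, and the uniform bound $(\tfrac{n+1}{|\mathbb{S}^2|})^{1/2}$ on the spherical-harmonic basis. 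The only cosmetic difference is that you invoke Eq.~\eqref{eq_rel_f_f_tilde} directly rather than rederiving the scaling factor from the gradient formula \eqref{eq_grad_h_thm}, which changes nothing of substance.
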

\begin{proof}
    See \ref{sec_theo_reg_ansatz_error_pf}.
\end{proof}

The last error estimate is rather conservative since typically $\norm{\widetilde{\fu}^\gamma}\ll u_0\,(n+1)\left(\frac{n+1}{\abs{\mathbb{S}^2}}\right)^{1/2}$. In the numerical results (cf.\ Table~\ref{tab_ICNN_hohlraum}) we found that often the gross numerical error is dominated by the regularization error given in Eq.~\eqref{eq_regularization_error}.

\subsubsection{Entropy dissipation and hyperbolicity}
We show that by closing the moment system \eqref{eq_momentRTa} using the ansatz $f_{\fu}^\gamma$ given in  Eq.~\eqref{eq_ansatzes}, key structural properties, such as invariance of range, conservation, entropy dissipation, and hyperbolicity, are preserved in the closed moment system.
The invariance of range property follows directly from the fact that $\text{Range}(f_{\fu}^\gamma)=[0,\infty)$, which is identical to the range for the kinetic density $D=[0,\infty)$ considered in this paper.
As in the case analyzed in \cite{AlldredgeFrankHauck}, the partial regularization here does not affect the collision invariants of the collision operator $Q(\cdot)$ and thus preserves the conservation property.
In the following Theorem~\ref{theo_entropy_dissipation}, we show that the moment system closed with ansatz $f_{\fu}^\gamma$ is entropy dissipative and hyperbolic due to strict convexity of $h^\gamma$, which is proven in Corollary~\ref{corr_h_convexity}.

\begin{theorem}\label{theo_entropy_dissipation}
The partially regularized entropy function $h^\gamma$ is an entropy for the moment system closed using the ansatz $f_{\fu}^\gamma$. The associated entropy flux is given by
\begin{align}
\mathbf{j}^\gamma(\fu)=\inner{\mathbf{v}\,\eta(f^\gamma_{\fu})}.
\end{align}
With $h^\gamma$ and $\mathbf{j}^\gamma$, the entropy dissipation law
\begin{align}
\partial_t h^\gamma(\fu) + \nabla_{\mathbf{x}}\cdot\mathbf{j}^\gamma(\fu)\leq 0
\end{align}
holds.
Furthermore, the closed moment system
\begin{align}\label{eq_hyper_moment_sys}
\partial_t\fu + \nabla_{\mathbf{x}}\cdot\inner{\mathbf{v}\otimes\mathbf{m}f^\gamma_{\fu}} = \inner{\mathbf{m}Q(f^\gamma_{\fu})}
\end{align}
is symmetrizable hyperbolic. 
\end{theorem}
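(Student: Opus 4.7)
The plan is to carry out the classical Levermore entropy-dissipation computation adapted to the partially regularized closure. The key algebraic fact is that, by construction of Eq.~\eqref{eq_ansatzes}, $f^\gamma_\fu=\eta'_*(\mathbf{g}^\gamma_\fu\cdot\fm)$, and since $\eta'_*=(\eta')^{-1}$ (which for the Maxwell--Boltzmann choice in Eq.~\eqref{eq_MB_entropy} is just $\log\circ\exp=\mathrm{id}$), this produces the pointwise identity $\eta'(f^\gamma_\fu)=\mathbf{g}^\gamma_\fu\cdot\fm$. Combined with the fact that $\mathbf{g}^\gamma_\fu=\nabla_\fu h^\gamma(\fu)$ by Definition~\ref{def_entropy}, this is what makes $h^\gamma$ an entropy. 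Note that closing the system with $\widetilde{f}^\gamma_\fu$ instead would break the identity, because of the discrepancy in Theorem~\ref{thm_grad_h}; this is the structural reason for preferring $f^\gamma_\fu$ over $\widetilde{f}^\gamma_\fu$.

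To derive the dissipation inequality, I would take the dot product of the closed system~\eqref{eq_hyper_moment_sys} with $\mathbf{g}^\gamma_\fu$. The time-derivative term becomes $\partial_t h^\gamma(\fu)$ by the chain rule. For the flux term, pulling the (finite-dimensional) dot product inside the velocity integral and using the identity above gives $\mathbf{g}^\gamma_\fu\cdot\nabla_\fx\cdot\inner{\fv\otimes\fm\,f^\gamma_\fu}=\inner{\eta'(f^\gamma_\fu)\,(\fv\cdot\nabla_\fx f^\gamma_\fu)}$; a second chain rule $\eta'(f^\gamma_\fu)\nabla_\fx f^\gamma_\fu=\nabla_\fx\eta(f^\gamma_\fu)$ then collapses this to $\nabla_\fx\cdot\inner{\fv\,\eta(f^\gamma_\fu)}=\nabla_\fx\cdot\mathbf{j}^\gamma(\fu)$, identifying the entropy flux. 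For the collision term, the same identity gives $\mathbf{g}^\gamma_\fu\cdot\inner{\fm\,Q(f^\gamma_\fu)}=\inner{\eta'(f^\gamma_\fu)\,Q(f^\gamma_\fu)}\le 0$ by the kinetic entropy-dissipation property in Eq.~\eqref{eq_kinetic_entropy_dissipation}, since $f^\gamma_\fu\in F_\fm$ by construction. Assembling the three pieces yields the claimed local entropy inequality.

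For symmetrizable hyperbolicity I would invoke the Godunov--Mock theorem: a system of conservation laws that admits a smooth, strictly convex entropy is symmetrizable hyperbolic, the symmetrizer being the entropy Hessian $\nabla^2_\fu h^\gamma$ acting on the flux Jacobians $\nabla_\fu\inner{v_i\,\fm\,f^\gamma_\fu}$. Strict convexity of $h^\gamma$ on $\{\fu\in\mathbb{R}^{n+1}:u_0>0\}$ is exactly Corollary~\ref{corr_h_convexity}, and twice-differentiability is inherited from the smoothness of the dual objective Eq.~\eqref{eq_entropyDualOCP_part_reg_objectF} via Theorem~\ref{thm_reg_rescale}. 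The heavy lifting was therefore already done in those earlier results; the main obstacle in the present theorem is only to recognize that the correct ansatz to insert into the moment system is $f^\gamma_\fu$ rather than $\widetilde{f}^\gamma_\fu$, after which both claims reduce to the standard M$_N$ computation.
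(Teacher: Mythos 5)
Your proposal is correct and follows essentially the same route as the paper's proof: both hinge on the identity $\eta'(f^\gamma_\fu)=\mathbf{g}^\gamma_\fu\cdot\fm$ together with $\mathbf{g}^\gamma_\fu=\nabla_\fu h^\gamma(\fu)$, derive the dissipation law by contracting the closed system with $\nabla_\fu h^\gamma$ and invoking the kinetic entropy dissipation \eqref{eq_kinetic_entropy_dissipation} for the collision term, and obtain hyperbolicity from the strict convexity of $h^\gamma$ (Corollary~\ref{corr_h_convexity}). The only cosmetic difference is that the paper carries out the Godunov--Mock symmetrization explicitly, writing the flux Jacobian as $J_iK$ with $K=\nabla^2_\fu h^\gamma$ and checking that $KJ_iK$ is symmetric, whereas you cite the theorem; your identification of the symmetrizer as the entropy Hessian is exactly what that explicit computation verifies.
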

\begin{proof}
	See \ref{sec_theo_entropy_dissipation_pf}.
\end{proof}
The entropy dissipation and hyperbolicity results in Theorem~\ref{theo_entropy_dissipation} are specific for moment systems closed with ansatz $f_{\fu}^\gamma$, which is determined by the gradient of the underlying entropy function $h^\gamma$.
When the moment system is closed using the ansatz $\widetilde{f}_{\fu}^\gamma$ (see  Eq.~\eqref{eq_ansatzes}), the results in Theorem~\ref{theo_entropy_dissipation} no longer hold, due to the inconsistency between $\nabla_{\fu} h^\gamma$ and the partially regularized multiplier $\balpha_{\fu}^\gamma$.

\section{Neural network-based partially regularized entropy-based closure}
\label{sec_NN_part_reg}
In this section, we propose an approximation strategy for the partially regularized moment entropy function $h^\gamma$, the optimal value of \eqref{eq_entropyOCP_part_reg}, using neural networks.  Applying neural networks on the partially regularized problem with normalized moments \eqref{eq_reduced_reg_closure} improves both the training efficiency and the accuracy of the approximation.

% The results in Theorem~\ref{thm_reg_rescale} shows that $\balpha_\fu^\gamma$ can be reconstructed from $\bbeta_{\ofus}^\gamma$ and that $h^\gamma$ can be extended from $\hat{h}^\gamma$. This relation allows for constructing approximations to $h^\gamma$ by taking the extension of data-driven approximations, e.g., neural networks, to $\hat{h}^\gamma$, which significantly reduces the approximation difficulty.
% An approximate ansatz to $f^\gamma_\fu$ then can be built by substituting $\mathbf{g}_{\fu}^{\gamma}$ as the gradient of the entropy approximation. 
% The properties of the ansatz $f^\gamma_\fu$ are analyzed in Section~\ref{sec_part_reg_prop}, and a neural network-based approximation of $f^\gamma_\fu$ is discussed in Section~\ref{sec_NN_part_reg}. We propose a globally convex approximation, and a rotationally invariant but non-convex approximation of the entropy closure, depending on the closure order.

 \begin{figure}[t]
	\begin{algorithm}[H]
    	\label{alg_network_training}
    	\input{assets/algorithms/alg_network_training.tex}
	\end{algorithm}
	\vspace{-1em}
\end{figure}
\begin{figure}[t]
	\begin{algorithm}[H]
    	\label{alg_network_inference_nr}
    	\input{assets/algorithms/alg_network_inference_nr.tex}
	\end{algorithm}
	\vspace{-1em}
\end{figure}

\subsection{Neural network approximation for the normalized, partially regularized entropy}
We extend the structure-preserving neural network-based entropy closure presented in~\cite{pmlr_v162_schotthofer22a} to the partially regularized entropy closure introduced in Section~\ref{sec_reg_normlized}.
As discussed at the end of Section~\ref{sec_reg_normlized}, the results of Theorem~\ref{thm_reg_rescale} allow us to construct strictly convex approximations to $\hat{h}^{\gamma}\colon\mathbb{R}^{n}\to\mathbb{R}$ defined therein and then extend the approximation following the formula given in  Eq.~\eqref{eq_full_entropy} to the full space $\{\fu=[u_0,(\fu_{\#})^\Tr]^\Tr\in\mathbb{R}^{n+1} \colon u_0>0\}$. The extended approximation then serves as an approximation to the partially regularized entropy function $h^{\gamma}$.

To this end, we use the input convex neural network (ICNN) \cite{Amos2017InputCN} to construct approximations $\hat{h}^p$ to the strictly convex function $\hat{h}^\gamma$, i.e.,
 \begin{align}\label{eq_alg_helper1}
	\hat{h}^p(\ofus):=\text{N}_{\theta^*}(\ofus)\approx \hat{h}^\gamma(\ofus)
 \quad\text{and}\quad
 \bbeta_{\ofus}^p := \nabla_{\fw}\hat{h}^p(\ofus),
 \end{align}
  where $p=(\gamma,\theta^*)$ with $\theta$ denoting the network parameters of an ICNN $\text{N}_{\theta}(\ofus)$ and $\theta^{*}$ the trained parameter values. The extension of $\hat{h}^p$, defined on $\mathbb{R}^n$, to ${h}^p$, defined on $\mathbb{R}^{n+1}$, is defined as in Eq.~\eqref{eq_full_entropy}:
  \begin{equation}\label{eq_extend_approximation}
      h^p(\fu) := u_0 \hat{h}^p(\ofus) +  \frac{u_0}{m_0}\log \,u_0\:.
  \end{equation}
  This gives a strictly convex approximation to the partially regularized entropy $h^\gamma$. The strict convexity of $h^p$ follows from the strict convexity of $\hat{h}^p$ and Corollary~\ref{corr_h_convexity}.
  When closing the moment system \eqref{eq_momentRT} with the approximate ansatz
  \begin{equation}
      f_\fu^p := \eta_*^\prime(\mathbf{g}_\fu^p\cdot\mathbf{m}) \quad\text{with}\quad
      \mathbf{g}_\fu^p := \nabla_{\fu} h^{p}(\fu)\:,
  \end{equation}
  the strict convexity of $h^{p}$ guarantees the entropy dissipation and hyperbolicity properties of the closed moment system; the proof follows exactly the proof of Theorem~\ref{theo_entropy_dissipation}.
  Here the gradient $\mathbf{g}_\fu^p$ can be computed from the ICNN approximation $\hat{h}^p$ and its gradient $\nabla_{\fw}\hat{h}^p$. Specifically\footnote{Here $\nabla_\ofus h(\ofu)=[\partial_{\overline{u}_1}h(\ofu),\dots,\partial_{\overline{u}_n}h(\ofu)]^\top\in\mathbb{R}^n$ is the vector of partial derivatives with respect to the last $n$ arguments of $h$ and by $\nabla_\fw\hat{h}(\fw)\in\mathbb{R}^n$ the full gradient of $\hat{h}$.},
  \begin{equation}
      \mathbf{g}_\fu^p = \nabla_{\fu} h^{p}(\fu) = [\partial_{u_0}h^{p}(\fu), \nabla_{\ofus} h^{p}(\fu)^\top  ]^\top,
  \end{equation}
  where it follows from  Eq.~\eqref{eq_extend_approximation} that
  \begin{equation}\label{eq_alpha_0_approx}
      \partial_{u_0}h^{p}(\fu) = \hat{h}^p(\ofus) - (\ofus)\cdot\nabla_{\fw}\hat{h}^p(\ofus) + \frac{\log u_0 + 1}{m_0} 
      \quad\text{and}\quad \nabla_{{\fu}_{\#}} h^{p}(\fu) = u_0\nabla_{\fw}\hat{h}^p(\ofus)
      \equiv u_0 \, \bbeta_{\ofus}^p.
  \end{equation}

This strategy just outlined above allows training of the network to be performed only on fruncated normalized moments in $\mathbb{R}^n$.
To define the training loss, we introduce the formal moment reconstruction maps for the regularized and non-regularized, normalized closures,
\begin{align}\label{eq_u_from_alpha}
\boldsymbol\psi^\gamma(\boldsymbol{\beta})=\inner{\mathbf{m}_\#\exp\left(\left[\vartheta({\bbeta}),{\bbeta}^\Tr\right]^\Tr\cdot\mathbf{m}\right)} + \gamma{\bbeta},\quad \text{and}\quad
 \boldsymbol\psi({\bbeta})= \boldsymbol\psi^{\gamma=0}({\bbeta}),
\end{align}
where $\vartheta$ is defined in Eq.~\eqref{eq_ref_alpha_reduction}, and by definition, $\ofus=\boldsymbol\psi^\gamma({\bbeta}_{\ofus}^\gamma)$.
% and $\widetilde{\ofu}_\#^\gamma=\boldsymbol\psi((\balpha_{\ofu}^\gamma)_\#)$.
 The corresponding training error for a data-set $X_{\text{Train}} =\menge{\overline{\fu}_{\#, i},{\bbeta}_{\overline{\fu}_{\#, i}}^\gamma, \hat{h}^\gamma(\ofu_{\#,i})}_{i=1}^T$ is given by
\begin{align}\label{eq_loss_icnn}
	\mathcal{L}(\theta;X_{\text{Train}}) =  \frac{1}{T}\sum_{i=1}^{T}
	\left|{\hat{h}^\gamma(\ofu_{\#,i}) - \hat{h}^p(\ofu_{\#,i})}\right|^2 +\norm{ {\bbeta}_{\overline{\fu}_{\#,i}}^\gamma-{\bbeta}_{\overline{\fu}_{\#,i}}^p}^2 +
	\norm{{\ofus}_{,i} -  \boldsymbol\psi^\gamma({\bbeta}_{\overline{\fu}_{\#,i}}^p)}^2,
\end{align}
where $T$ denotes the size of the training set.
%\footnote{We note that the last term in the loss function is chosen to be the difference between the original fruncated moment $\ofus$ and its approximated reconstruction $\widetilde{\ofu}^p_\#$. One can be replaced this term by $\norm{{\ofus}_{,i} - {\ofu}^{p}_{\#,i}}^2$, where  approximated moment of ansatz $f_\fu^\gamma$ of Eq.~\eqref{eq_ansatzes} using Eq.~\eqref{eq_rel_f_f_tilde}. }
For details of the network architecture, we refer to Section~\ref{sec_num_training}. The training workflow for finding $\theta^*$ is summarized in Algorithm~\ref{alg_network_training}, and inference within a kinetic solver is described in Algorithm~\ref{alg_network_inference_nr}.

\subsection{Neural network approximation error and data sampling}
In addition to the errors analyzed in Theorem~\ref{theo_reg_ansatz_error} a neural network-based approximation of $h^\gamma$ introduces an approximation error defined by the inference accuracy of the used neural network. It directly follows for the neural network-based reconstruction $\fu^p$ that
\begin{align}
    \norm{\fu^p -\fu} \leq \norm{\fu^p - \fu^\gamma} + \norm{\fu^\gamma - \fu},
\end{align}
where the second term of the right-hand side is specified in Theorem~\ref{theo_reg_ansatz_error}. In the following, we analyze the first term via the approximation error of the associated approximated gradient $\fg^p_{\ofus}$ under the assumption of global convexity of the neural network approximation $h^p$, i.e. an approximation given by Algorithm~\ref{alg_network_inference_nr}. Numerical results in Section~\ref{sec_num_training} demonstrate that the neural network approximation and regularization error are related, since a higher regularization yields smaller neural network approximation errors while increasing the regularization error and vice versa. 
Due to the convexity of the neural network-based approximations of the entropy closure and the sampled moments, it is feasible to estimate the maximum interpolation error of the neural network in the convex hull of the training data, i.e.
\begin{align}\label{eq_max_error_sampling}
   	\max_{\ofus\in\mathcal{C}\left(\menge{\overline{\fu}_{\#,i}}_{i=1}^T\right)}\norm{\fg^\gamma_{\ofus} -\fg^p_{\ofus}},
\end{align}
where $\mathcal{C}\left(\menge{\overline{\fu}_{\#,i}}_{i=1}^T\right)$ is the convex hull of the training data moments $\ofus$.
For a given interpolation error tolerance one can employ ansatz~\eqref{eq_ansatzes}, to derive the error in $\fu^p$. For details, we refer to~\cite[Section 4.1]{pmlr_v162_schotthofer22a}. The authors of~\cite{pmlr_v162_schotthofer22a} further provide a sampling strategy to minimize the maximal interpolation error~\eqref{eq_max_error_sampling}, which we adapt to the partially regularized setting of this work. The strategy is to sample the bounded set
\begin{align}
\label{eq:BMtau}
	B_{M,\tau}^\gamma=\menge{{\bbeta}\in\mathbb{R}^{n}:\norm{{\bbeta}}<M\cup\lambda_{\text{min}}^\gamma>\tau}.
\end{align}
uniformly~\cite{pmlr_v162_schotthofer22a, old_entropy}\footnote{Other works~\cite{XIAO2023112278, lee2024structurepreserving} adapt Gaussian sampling for non-regularized closures, which stabilizes neural network training, at the expense of a strongly increased interpolation error.}, where $\lambda_{\text{min}}^\gamma$ is the smallest eigenvalue of the Hessian $\hat{H}_n^\gamma\left({\bbeta}\right)$ and proportional to the condition number of the reduced, partially regularized closure problem~\eqref{eq_reduced_reg_closure}, and $M$ is an additional norm boundary for the Lagrange multiplier. The training data moments and entropy values are then sampled from $B_{M,\tau}^\gamma$, see Algorithm~\ref{alg_sampling_entropy}.

Furthermore, analysis of the regularization error in Theorem~\ref{theo_reg_ansatz_error} shows that regularized moments and non-regularized moments, generated by the same Lagrange multiplier with norm bound $\norm{{\bbeta}}<M$, have the distance
\begin{align}\label{eq_psi}
	\norm{\boldsymbol\psi({\bbeta})-\boldsymbol\psi^\gamma({\bbeta})} \leq \gamma M,
\end{align}
where $\boldsymbol{\psi}$ is given by Eq.~\eqref{eq_u_from_alpha}.

Lastly, the regularized entropy closure functional $\hat{h}^\gamma$, $\gamma>0$, is finite at the boundary of the non-regularized realizable set $\partial\overline{\mathcal{R}}$, which mitigates the problem of diverging entropy values described by Theorem~\ref{theo_diverging_entropy}.
 Since $\hat{h}^\gamma$ is convex in $\ofus$, the data sampling strategy of~\cite{pmlr_v162_schotthofer22a} can be extended beyond $\partial\overline{\mathcal{R}}$ of the non-regularized closure. 
 
Figure~\ref{fig_realizable_set_gammas} displays the boundary of the convex hull of reconstructed moments  $\boldsymbol\psi^\gamma({\bbeta})$ of the M$_2$ closure in one dimension, where ${\bbeta}\in\menge{{\bbeta}\in\mathbb{R}^{n}:\norm{{\bbeta}}<M}$ and $\gamma\in\menge{0,1\mathrm{e}{-1},1\mathrm{e}{-2},1\mathrm{e}{-3}}$. 
Figure~\ref{fig_entropy_gammas} gives the corresponding result for $\hat{h}^\gamma$ for the M$_1$ closure in one dimension.
 \begin{figure}[t]
	\begin{algorithm}[H]
    	\label{alg_sampling_entropy}
    	\input{assets/algorithms/alg_sampling_entropy.tex}
	\end{algorithm}
	\vspace{-1em}
\end{figure}
\begin{figure}
\centering
\begin{subfigure}[t]{.485\textwidth}
 	\raggedright
	\includegraphics[width=\textwidth]{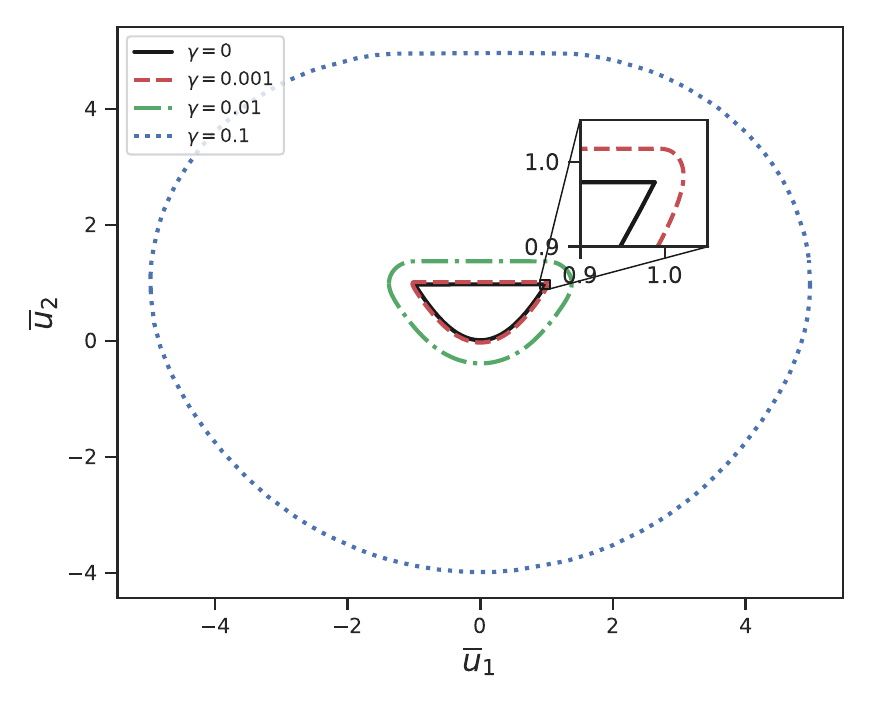}
	\caption{Normalized moments $\ofus=\boldsymbol\psi^{\gamma}({\bbeta})$ sampled for different values of $\gamma$ for the one-dimensional M$_2$ closure. The colored contour denotes the convex hull boundary of the sampled moments for each $\gamma$. The black contour is very close to the boundary of the reduced, normalized realizable set $\widetilde{\mathcal{R}}$, defined in \eqref{eq_reduced_normalizer_realizable_set}, and as $\gamma$ increases, the convex hull of the sampled moments expands.}
	\label{fig_realizable_set_gammas}
\end{subfigure}
\hfill
\begin{subfigure}[t]{.485\textwidth}
 	\raggedleft
	\includegraphics[width=\textwidth]{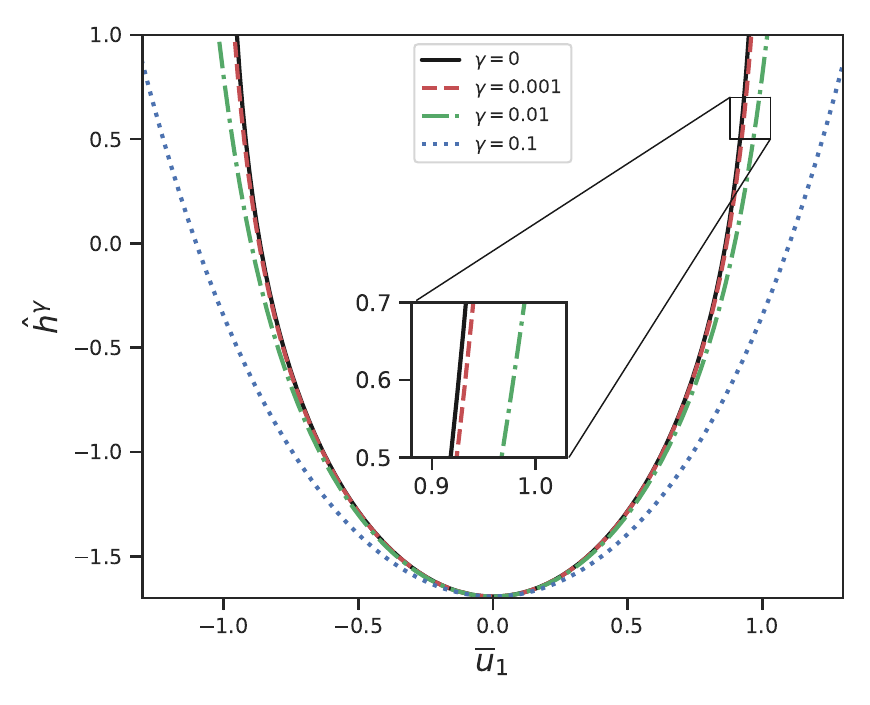}
	\caption{Regularized entropy function values $\hat{h}^{\gamma}(\ofus)$ for the one-dimensional M$_1$ closure at different values of $\gamma$. As $\gamma$ increases, the slope of $\hat{h}^{\gamma}$ becomes less steep.}  
	\label{fig_entropy_gammas}
\end{subfigure}
\caption{The sets of sampled moments $\ofus$ and the regularized entropy functions $\hat{h}^{\gamma}(\ofus)$, see Theorem~\ref{thm_reg_rescale}, for different regularization parameters $\gamma$. The moments are computed from multipliers ${\bbeta}$, i.e., $\ofus=\boldsymbol{\psi}^\gamma({\bbeta})$, where $\boldsymbol{\psi}^\gamma$ is defined in \eqref{eq_u_from_alpha} and ${\bbeta}$ is sampled from the set $\menge{{\bbeta}\in\mathbb{R}^{n}:\norm{\boldsymbol{\beta}}<M }$.}
\end{figure}

Figure~\ref{fig_alpha_u_distributions} illustrates the sampling distributions of $\ofus$ and $\bbeta_{\ofus}^\gamma$ for different $\gamma$ in the one dimensional M$_2$ closure. For $\gamma>0$, we see in Fig.~\ref{fig_alpha_gammas_3}), Fig.~\ref{fig_alpha_gammas_2}) and Fig.~\ref{fig_alpha_gammas_1}) that within the norm boundary $\norm{\bbeta_{\ofus}^\gamma}\leq M=40$, all Lagrange multipliers fulfill the eigenvalue threshold $\tau=0.01$.
 The data generator is part of the KiT-RT framework~\cite{Kitrt_paper}.

\begin{figure}
\begin{subfigure}[t]{.245\textwidth}
 	\centering
	\includegraphics[width=\textwidth]{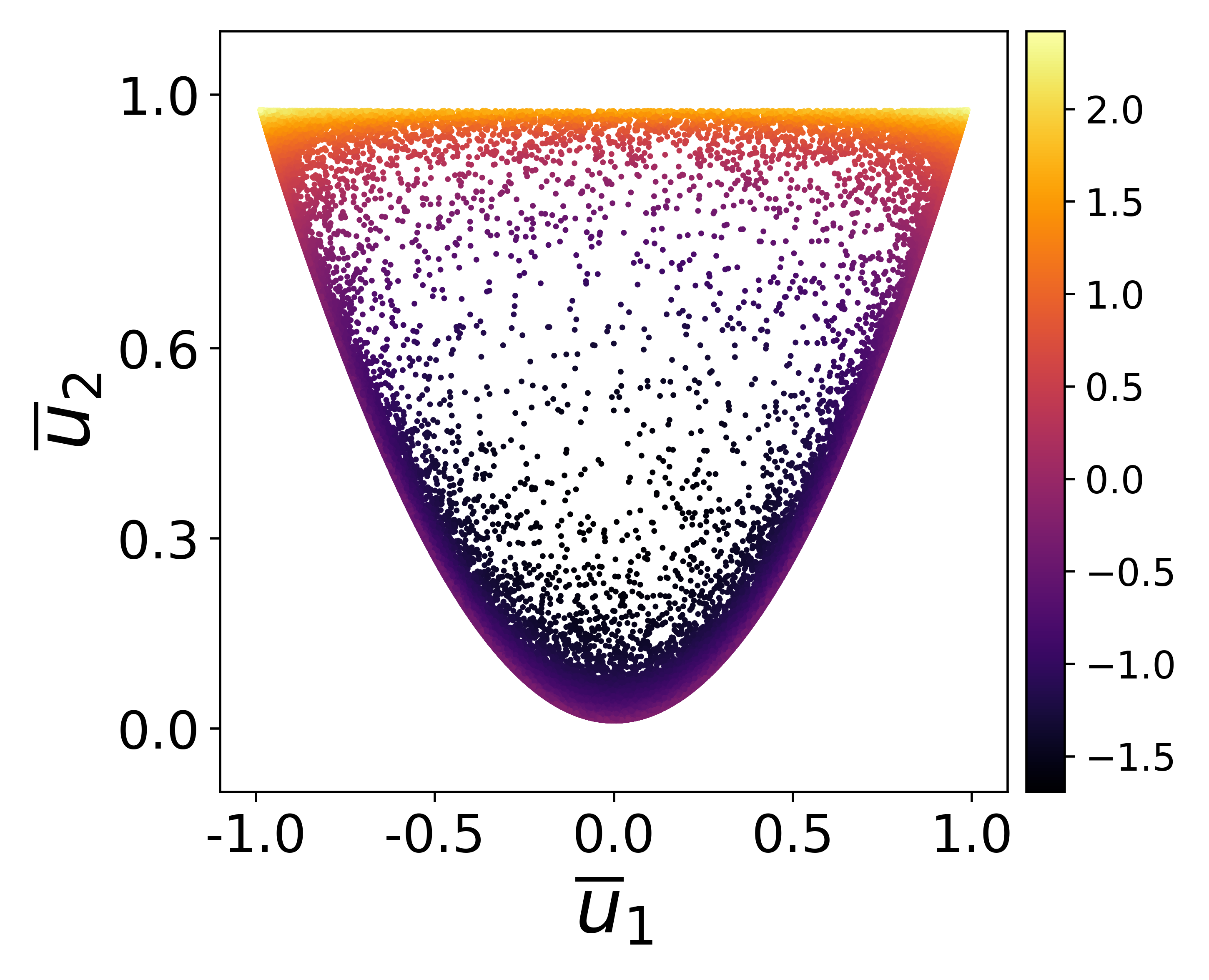}
	\caption{ $\gamma=0$} \label{fig_realizable_set_gammas_0}
\end{subfigure}
\begin{subfigure}[t]{.245\textwidth}
 	\centering
	\includegraphics[width=\textwidth]{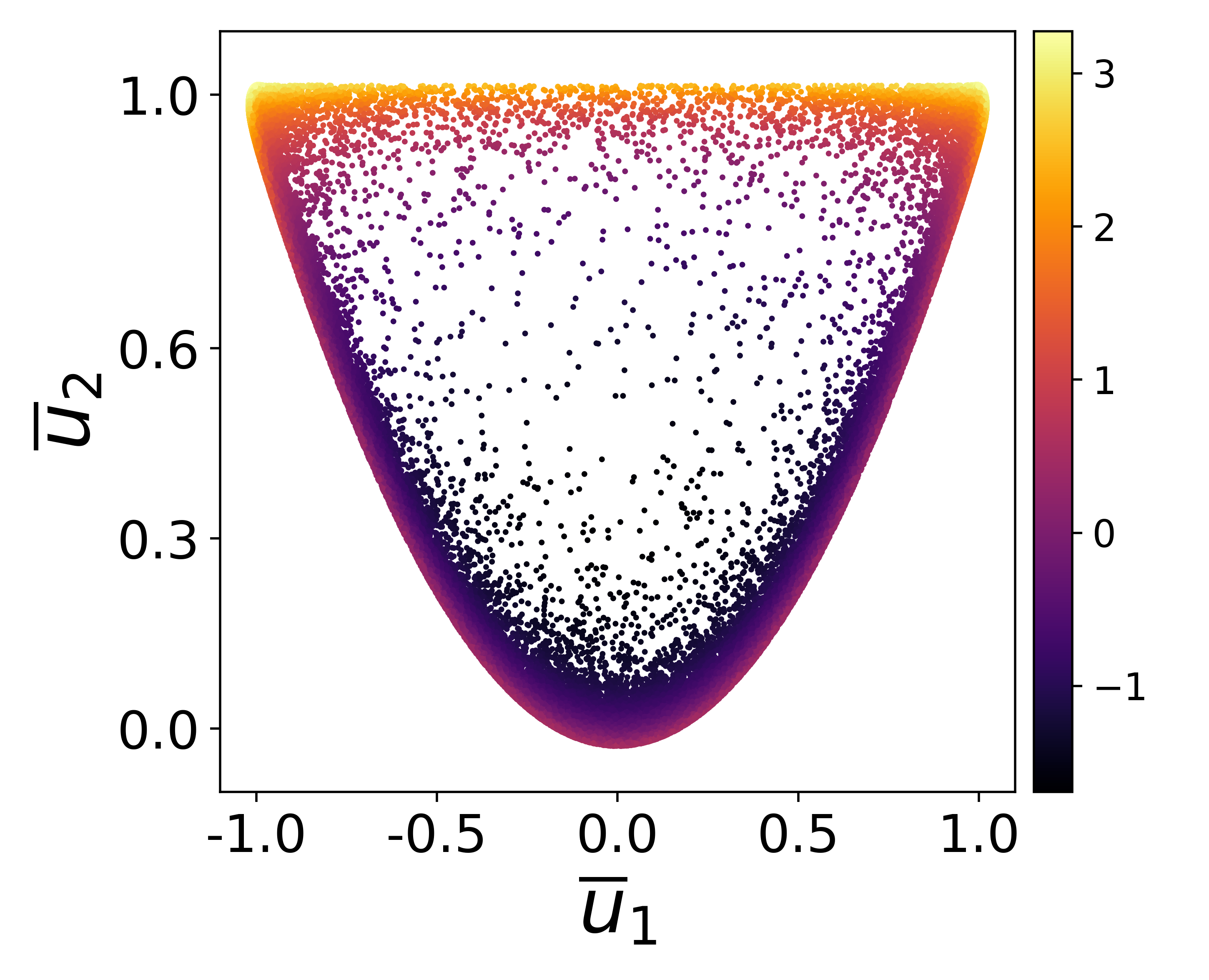}
	\caption{ $\gamma=1\mathrm{e}{-3}$}\label{fig_realizable_set_gammas_3}
\end{subfigure}
\begin{subfigure}[t]{.245\textwidth}
 	\centering
	\includegraphics[width=\textwidth]{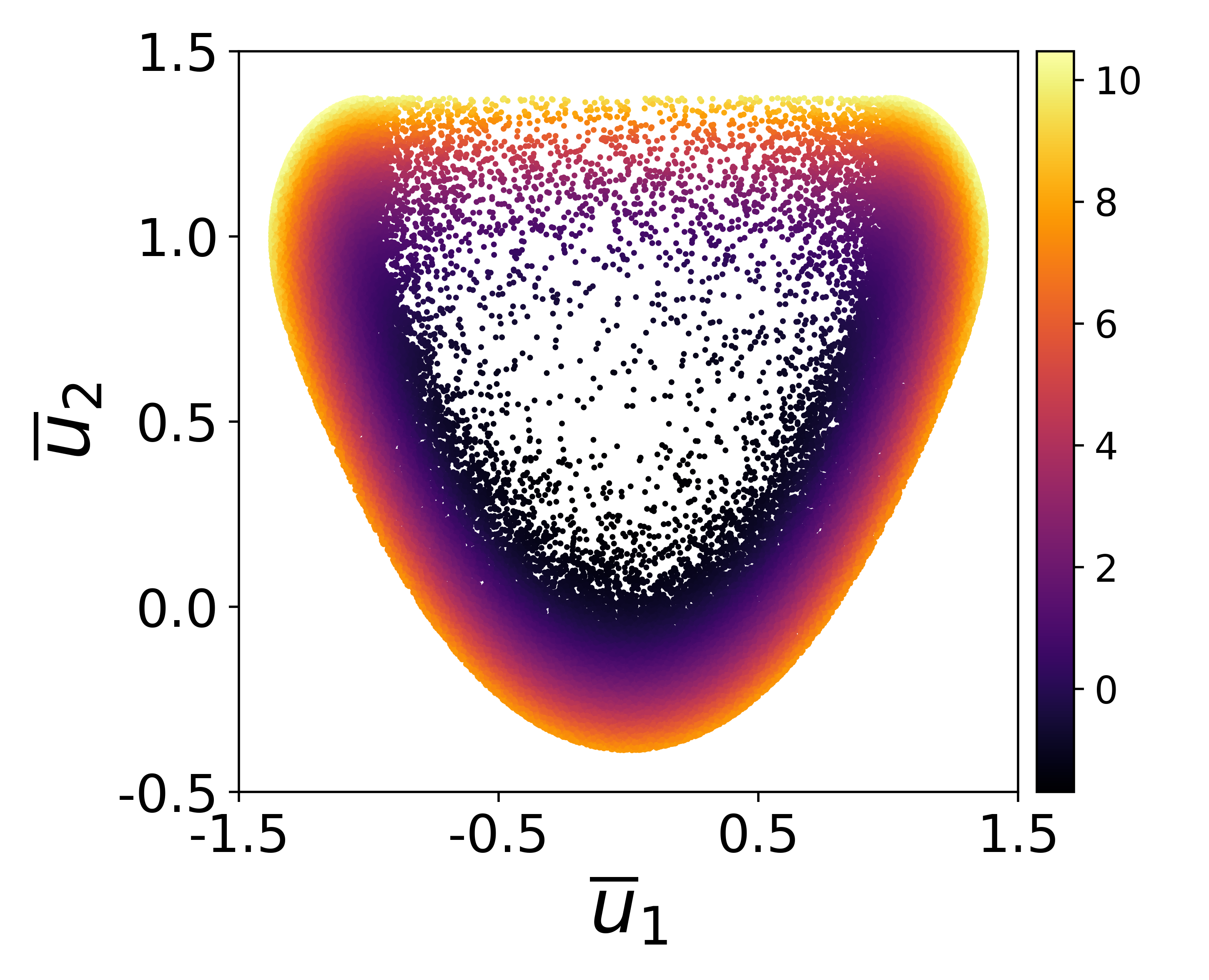}
	\caption{ $\gamma=1\mathrm{e}{-2}$}\label{fig_realizable_set_gammas_2}
\end{subfigure}
\begin{subfigure}[t]{.245\textwidth}
 	\centering
	\includegraphics[width=\textwidth]{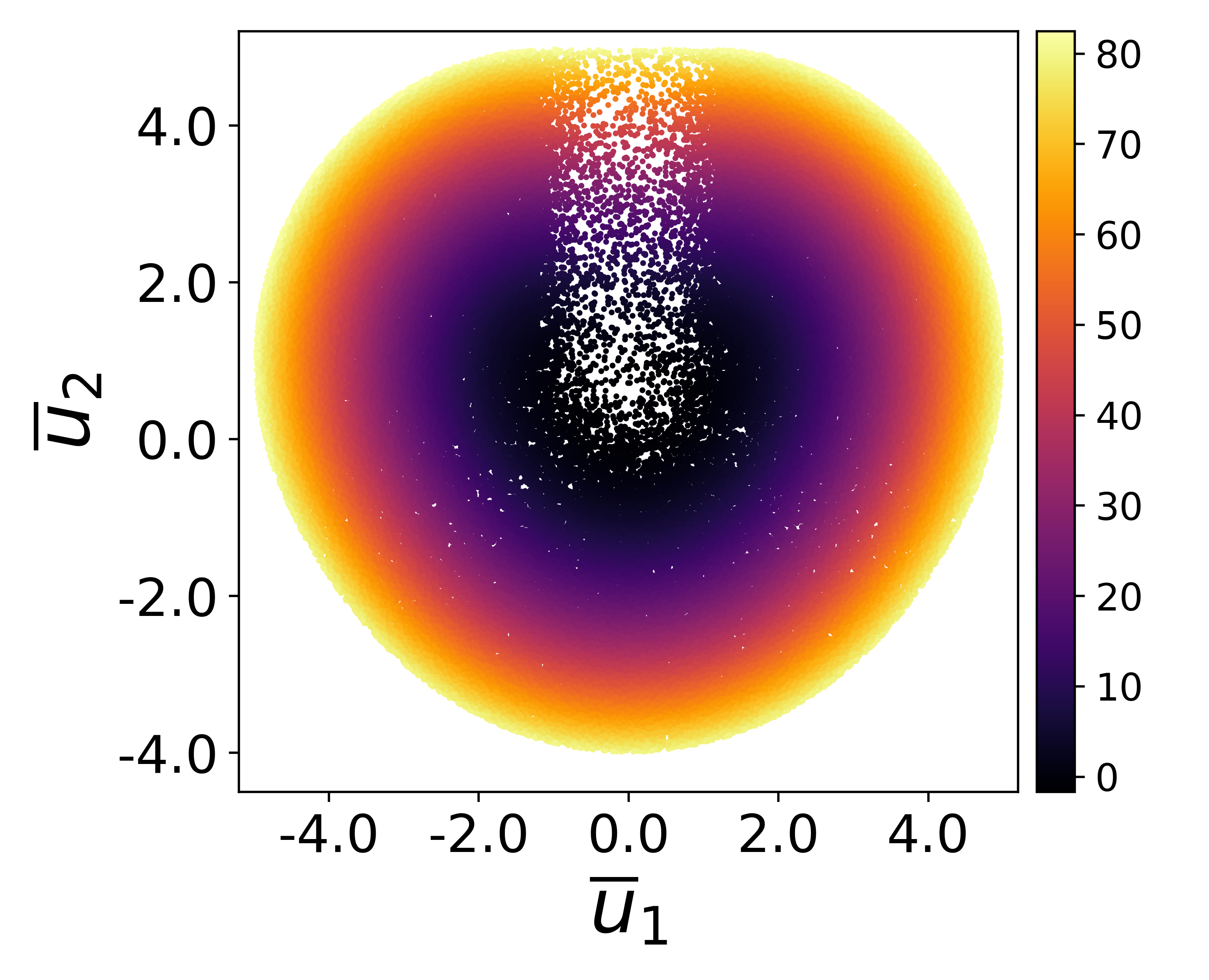}
	\caption{ $\gamma=1\mathrm{e}{-1}$}\label{fig_realizable_set_gammas_1}
\end{subfigure}
\begin{subfigure}[t]{.245\textwidth}
 	\centering
	\includegraphics[width=\textwidth]{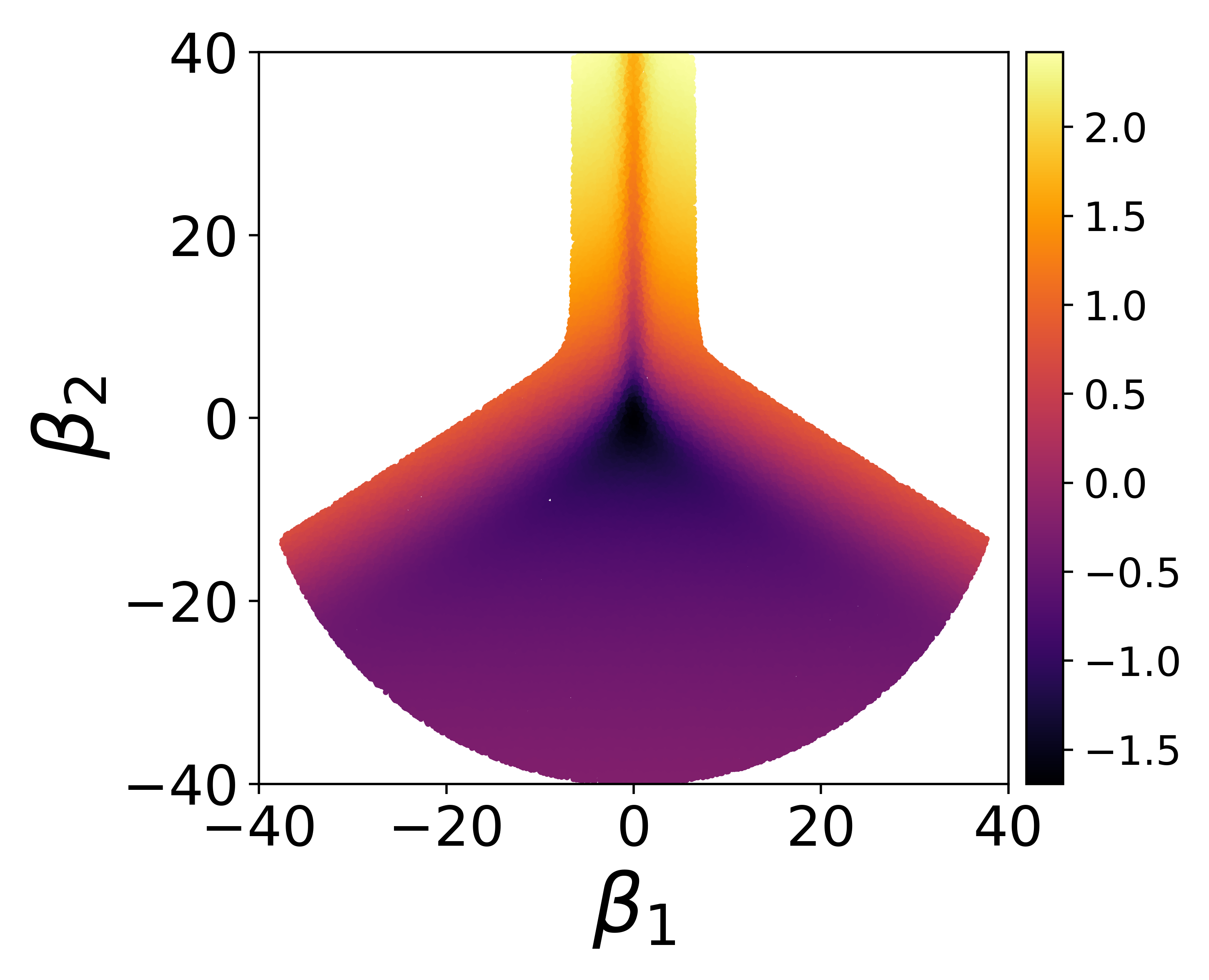}
	\caption{ $\gamma=0$}\label{fig_alpha_gammas_0}
\end{subfigure}
\begin{subfigure}[t]{.245\textwidth}
 	\centering
	\includegraphics[width=\textwidth]{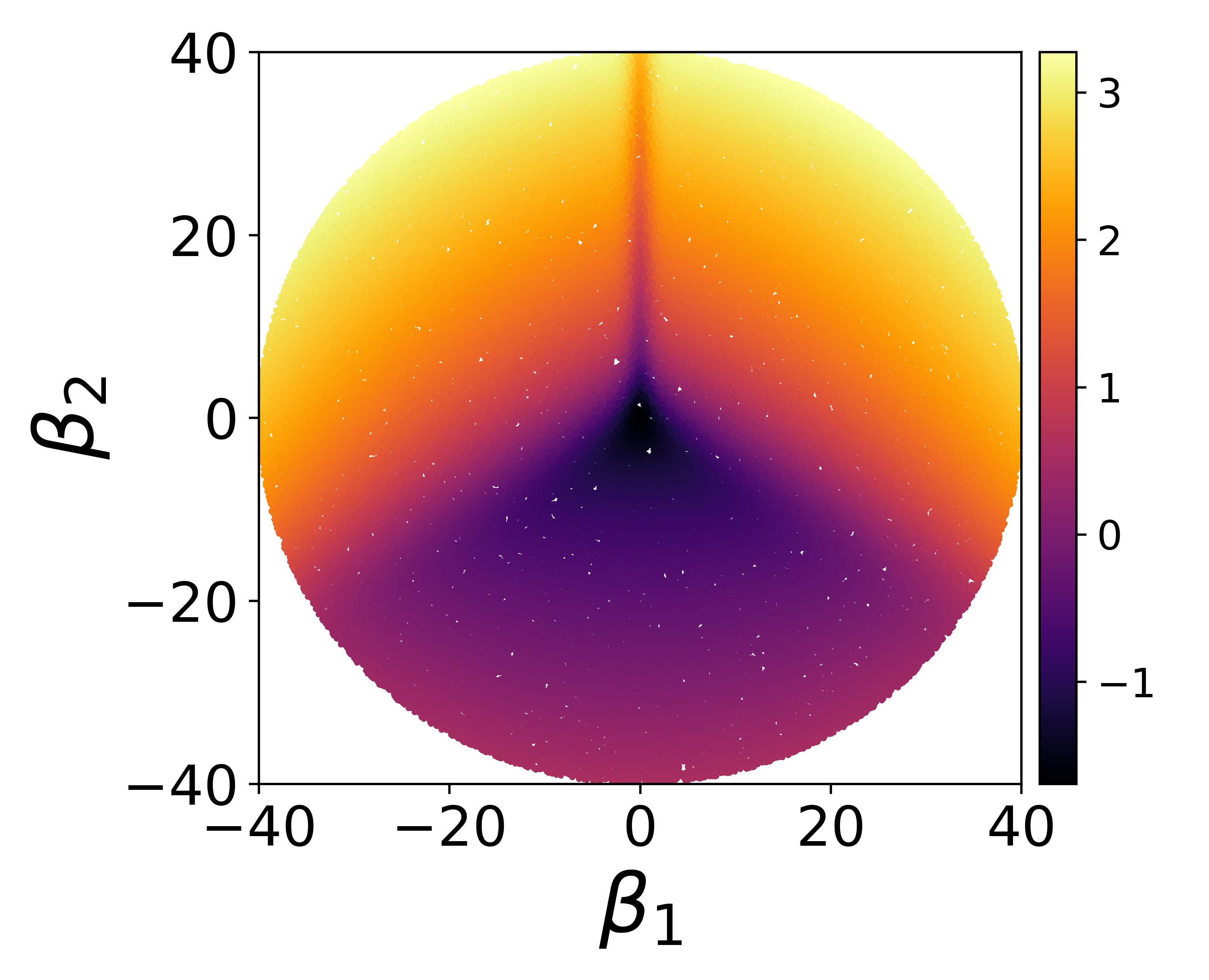}
	\caption{ $\gamma=1\mathrm{e}{-3}$}\label{fig_alpha_gammas_3}
\end{subfigure}
\begin{subfigure}[t]{.245\textwidth}
 	\centering
	\includegraphics[width=\textwidth]{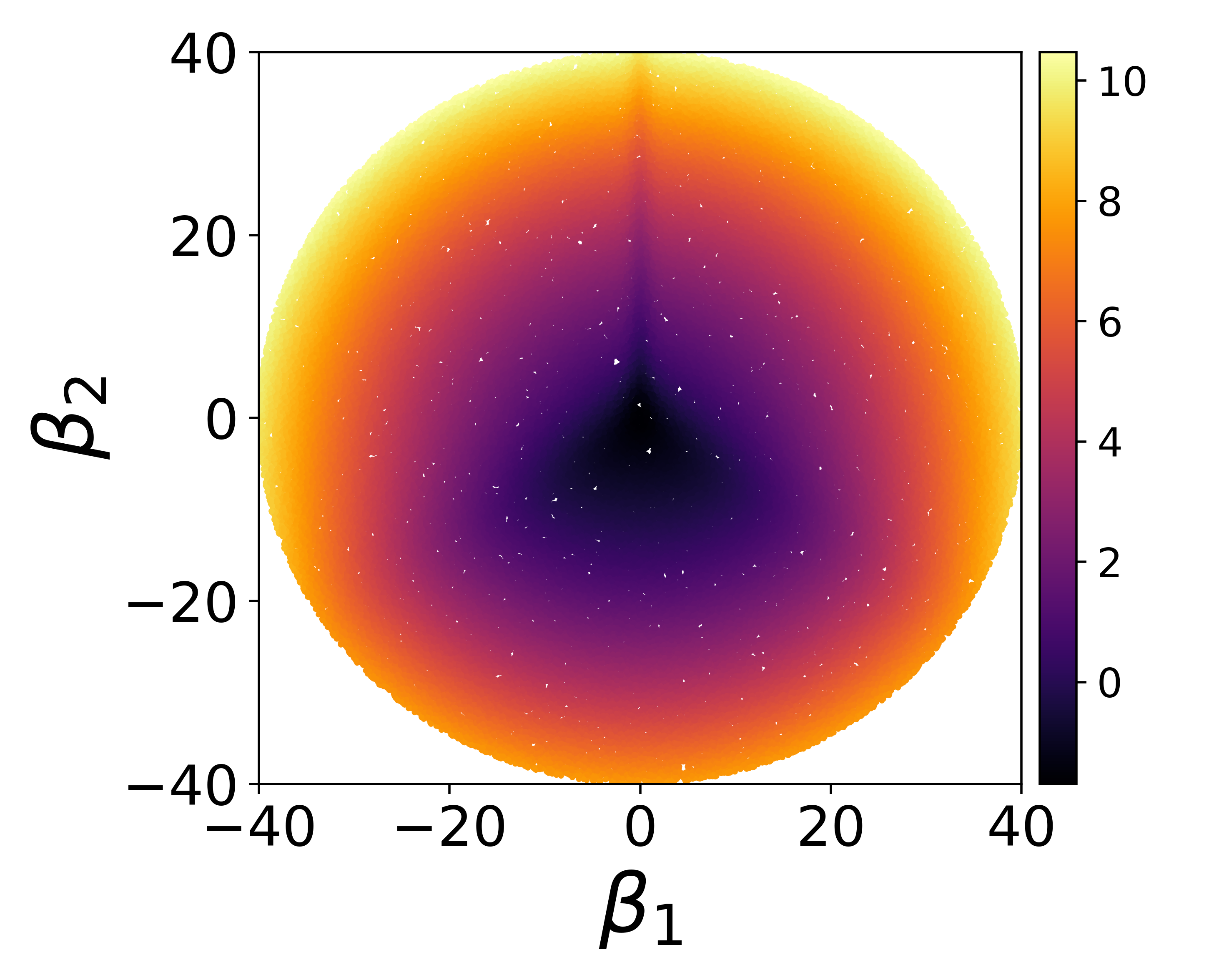}
	\caption{ $\gamma=1\mathrm{e}{-2}$}\label{fig_alpha_gammas_2}
\end{subfigure}
\begin{subfigure}[t]{.245\textwidth}
 	\centering
	\includegraphics[width=\textwidth]{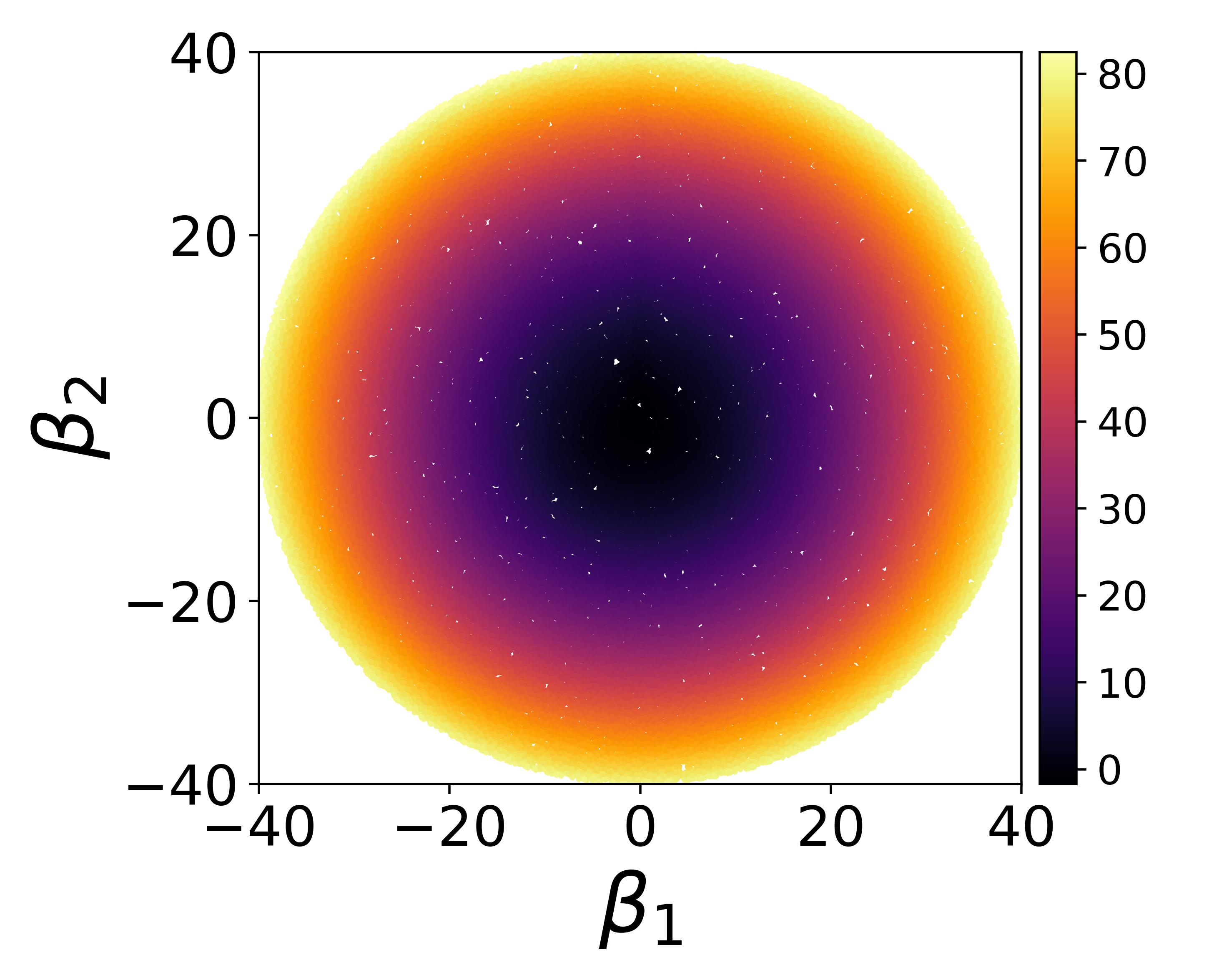}
	\caption{ $\gamma=1\mathrm{e}{-1}$}\label{fig_alpha_gammas_1}
\end{subfigure}
\caption{ Scatter plots for sampled values of $\bbeta$ (bottom row) and the corresponding values of $\ofus=\boldsymbol{\psi}^\gamma({\bbeta})$ (top row), where $\boldsymbol{\psi}^\gamma$ is defined in \eqref{eq_u_from_alpha}. The value of $\hat{h}^\gamma$ is represented in each plot by a heatmap.  Less regularization leads to steeper slopes of $\hat{h}^\gamma$ and thus higher sampling densities in regions where $\norm{\ofu}$ is large. The sampling strategy  is described in Algorithm \ref{alg_sampling_entropy} using the set $B_{M=40,\tau=0.01}^\gamma$, defined in \eqref{eq:BMtau}, for the rejection criteria.} 
\label{fig_alpha_u_distributions}
\end{figure}

\section{Numerical solver}
The entropy-based closure is combined with a kinetic scheme~\cite{GarretHauck, AlldredgeHauckTits, Kitrt_paper}. Thus, the moment system of Eq.~\eqref{eq_momentRTa} is discretized using a finite volume scheme in space,  
\begin{equation}
   \partial_t \mathbf u_i=-\frac{1}{ A_i} \sum_{j\in N(i)} \mathbf{F}_{\rm{up}} (\mathbf u_j,\mathbf u_i)+  \mathbf{G}(t,\mathbf x_i,\mathbf u_i),
\end{equation}
where $\mathbf{G}(t,\mathbf x_i,\mathbf u_i)$ is the discretization of collision and source terms, and $\mathbf{F}_{\rm{up}} (\fu_j,\fu_i)$ is the kinetic flux discretized with an upwind scheme.  The kinetic flux is computed for all faces of neighboring cells $N(i)$ of the current cell $i$ of an unstructured mesh. The cell area of the current cell is denoted by $A_i$.
We arrive at the kinetic numerical flux
\begin{align}
	\begin{aligned}
 	\mathbf{F}_{\rm{up}}(\mathbf u_j,\mathbf u_i) =\inner{{\fm} \mathbf{v}\cdot \mathbf n_{i,j} \left[{f_{\fu_i}} \mathcal{H}\left( \mathbf{v}\cdot \mathbf n_{i,j}\right) + f_{\mathbf u_j}  \left( 1- \mathcal{H}\left( \mathbf{v}\cdot \mathbf n_{i,j}\right) \right) \right]},
\end{aligned}
\end{align}
where $\mathbf{n}_{i,j}$ is the outward facing normal of the face between cells $i$ and $j$ scaled with the cell length and $\mathcal{H}$ denotes the Heaviside step function.
The int egral is evaluated with a tensorized Gauss-Legendre quadrature.
We employ a second-order slope reconstruction for unstructured grids with the Barth-Jespersen flux limiter in \cite{barthJespersen} to obtain a second-order accurate spatial discretization. The reconstruction of the kinetic density $f_{\fu}$ uses a generic ansatz.

The second-order Heun's scheme is used for temporal discretization at time step $k$ in grid cell $i$: 
\begin{align}
\begin{aligned}
   \mathbf u_i^{*}&=
	\mathbf u_i^{k}-\frac{\Delta t}{ A_i} \sum_{j\in N(i)} \mathbf{F}_{\rm{up}} (\mathbf u_j^k,\mathbf u_i^k)+ \frac{\Delta t}{ A_i} \mathbf{G}(\mathbf u_i^k),\\
	\mathbf u_i^{**}&=
	\mathbf u_i^{*}-\frac{\Delta t}{ A_i} \sum_{j\in N(i)} \mathbf{F}_{\rm{up}}(\mathbf u_j^*,\mathbf u_i^*) + \frac{\Delta t}{ A_i} \mathbf{G}(\mathbf u_i^*), \\
 	\mathbf u_i^{k+1}& = \frac{1}{2}\left(\mathbf u_i^{k} +  \mathbf u_i^{**}\right).
\end{aligned}
\end{align}
The kinetic scheme is implemented in the open-source radiative transport package KiT-RT~\cite{Kitrt_paper} and available on GitHub\footnote{\url{https://github.com/CSMMLab/KiT-RT}}.

\section{Numerical results}\label{sec_numerical_results}
We discuss numerical results for the proposed neural network-based, partially regularized entropy closures. We first discuss the network training and evaluation before we inspect the simulation performance in the  hohlraum~\cite{hohlraumHauckMclarren} test case. 

\subsection{Neural network training}\label{sec_num_training}
In this section, we evaluate the training performance of the neural network approximations for different regularization levels and moment orders.
We consider test cases in two spatial dimensions so we discard basis elements of $\fm(\fv)$ that correspond to the third spatial dimension. For each closure order $N$, we sample the $B_{M,\tau}^\gamma$, defined in Eq.~\eqref{eq:BMtau}, see Table~\ref{tab_sampling}. $M$ and is dependent on $N$ and  chosen such that the exponential in Eq.~\eqref{eq_ansatzes} does not cause a numerical overflow in single precision accuracy. Then $B_{M,\tau}^\gamma$ is sampled for  $\gamma=10^{-1},10^{-2},10^{-3}$  using the same $M$ and $\tau$.  
We partition the sampled data in $90\%$ training and $10\%$ test data.

\begin{table}[t]
\centering
\caption{Overview of the sampling parameters for the training data generation of $T$ samples from the set $B_{M,\tau}^\gamma$ defined in Eq.~\eqref{eq:BMtau}. The same parameters are used for all values of $\gamma$ used in the numerical results section. } \label{tab_sampling}
\begin{tabular}{@{\extracolsep{1pt}}lcccccc}
\toprule   
 closure &  $M$ & $\tau$ & $T$ \\
     	\midrule
M$_1$ &$40$&$1\rm{e}{-4}$ & $1\rm{e}{6}$\\
M$_2$ &$20$&$1\rm{e}{-4}$ & $1\rm{e}{6}$\\
M$_3$ &$12$&$1\rm{e}{-4}$ & $1\rm{e}{6}$\\
M$_4$ &$8$&$1\rm{e}{-4}$  & $1\rm{e}{6}$\\
\bottomrule
\end{tabular}
\end{table}

\begin{table}[t]
\centering
\caption{Architecture and the number of trainable parameters of the neural network models for each closure. A preliminary search is done to determine the a good choice for the network architectures. We report the layer output dimension and the number of layers for each model. ICNNs typically require an order of magnitude fewer parameters than ResNets to achieve similar test accuracy.} \label{tab_nn_architecture}
\begin{tabular}{@{\extracolsep{1pt}}lcccccc}
\toprule   
 &\multicolumn{3}{c}{ICNN}  &\multicolumn{3}{c}{ResNet}  \\
 \cmidrule{2-4}
 \cmidrule{5-7}
 closure &  weight matrix dimension & layers & params & weight matrix dimension & layers & params \\
     	\midrule
M$_1$ &$100\times 100$&$2$&$2.1\mathrm{e}{5}$&$200\times 200$&$4$&$3.6\mathrm{e}{6}$\\
M$_2$ &$100\times 100$&$3$&$3.2\mathrm{e}{5}$&$300\times 300$&$6$&$5.4\mathrm{e}{6}$\\
M$_3$ &$300\times 300$&$3$&$2.8\mathrm{e}{6}$&$400\times 400$&$6$&$9.6\mathrm{e}{6}$\\
M$_4$ &$400\times 400$&$3$&$5.4\mathrm{e}{6}$&$600\times 600$&$6$&$2.1\mathrm{e}{7}$\\
\bottomrule
\end{tabular}
\end{table}

\begin{figure}
	\centering
	\begin{subfigure}[t]{0.3\textwidth}
    	\includegraphics[height=4cm]{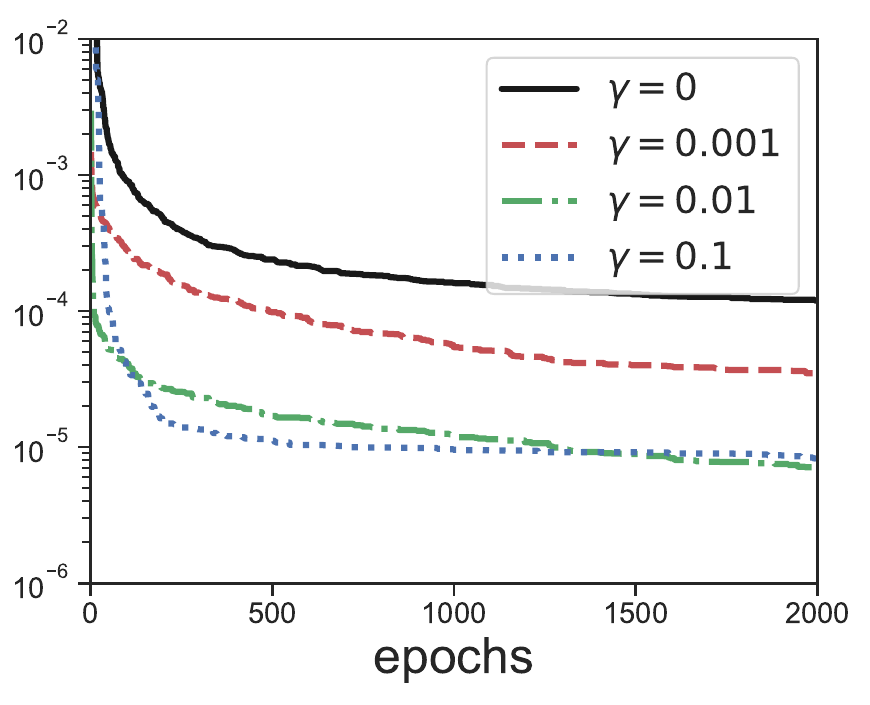}
    	\caption{ M$_3$, ICNN,  $e_{\hat{h}^\gamma}$}
	\end{subfigure}   
	\begin{subfigure}[t]{0.3\textwidth}
    	\includegraphics[width=\textwidth]{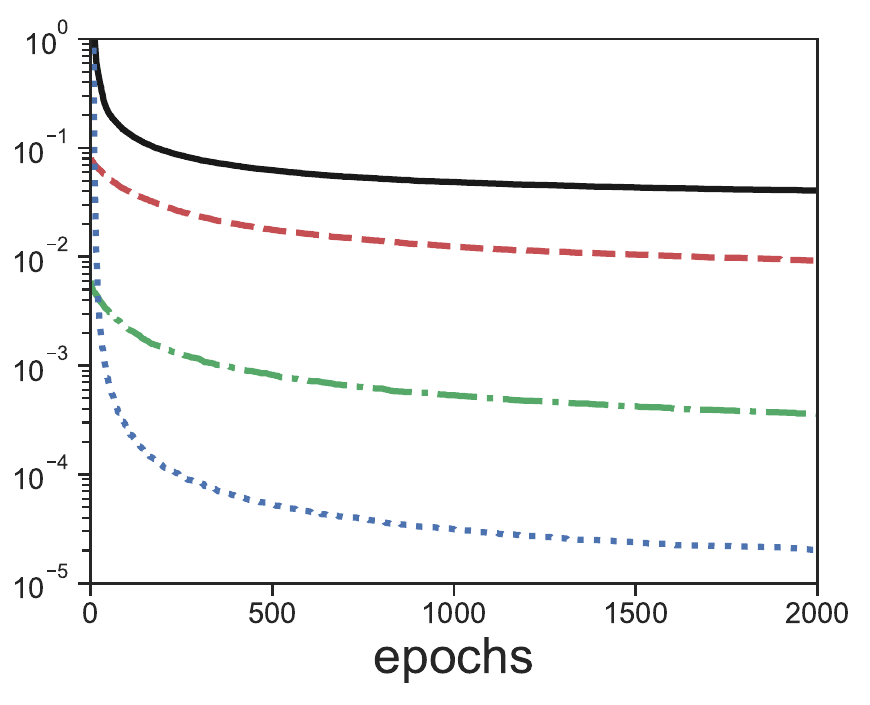}
    	\caption{ M$_3$, ICNN, $e_{\bbeta^\gamma_{\ofus}}$}
	\end{subfigure}  
	\begin{subfigure}[t]{0.3\textwidth}
    	\includegraphics[width=\textwidth]{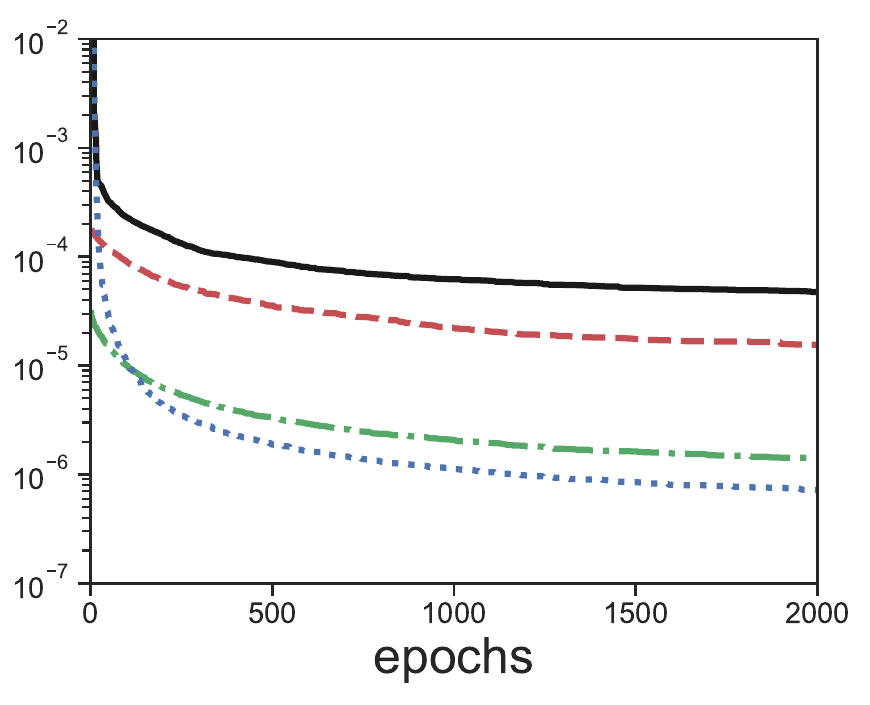}
    	\caption{ M$_3$, ICNN, ${e}_{\ofu}$}
	\end{subfigure}
	\begin{subfigure}[t]{0.3\textwidth}
    	\includegraphics[width=\textwidth]{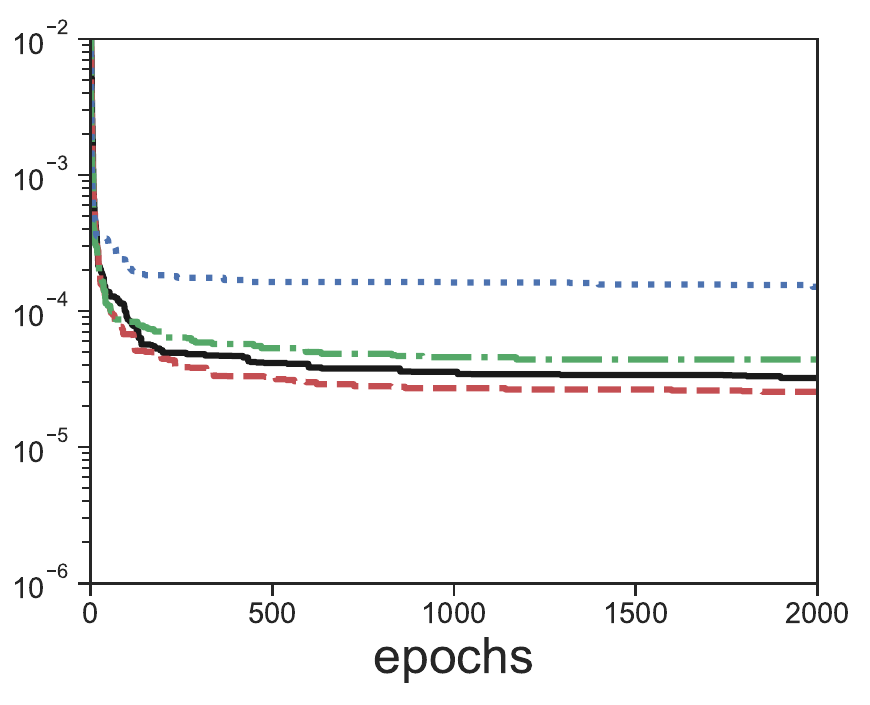}
    	\caption{M$_3$, ResNet,  $e_{\hat{h}^\gamma}$}
	\end{subfigure}   
	\begin{subfigure}[t]{0.3\textwidth}
    	\includegraphics[width=\textwidth]{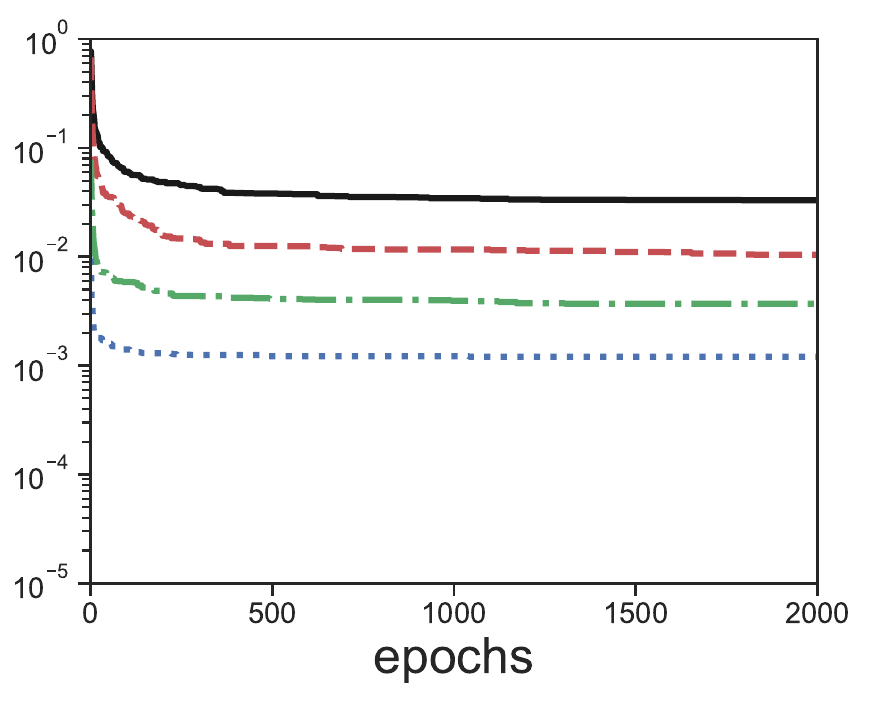}
    	\caption{ M$_3$, ResNet, $e_{\bbeta^\gamma_{\ofus}}$ }
	\end{subfigure}  
	\begin{subfigure}[t]{0.3\textwidth}
    	\includegraphics[width=\textwidth]{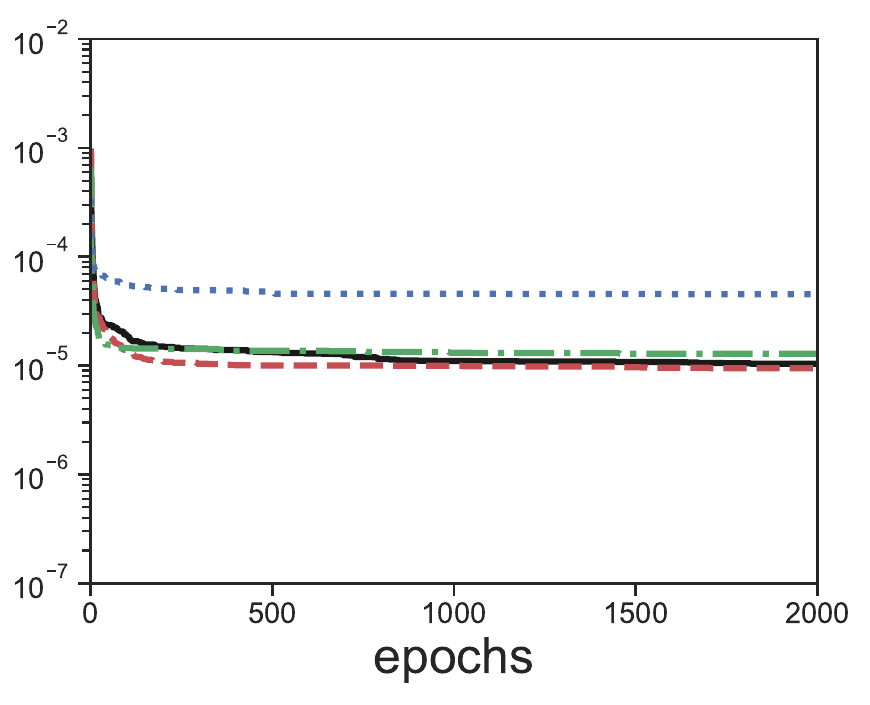}
    	\caption{  M$_3$, ResNet, ${e}_{\ofu}$}
	\end{subfigure}  
	\centering
	\begin{subfigure}[t]{0.3\textwidth}
    	\includegraphics[width=\textwidth]{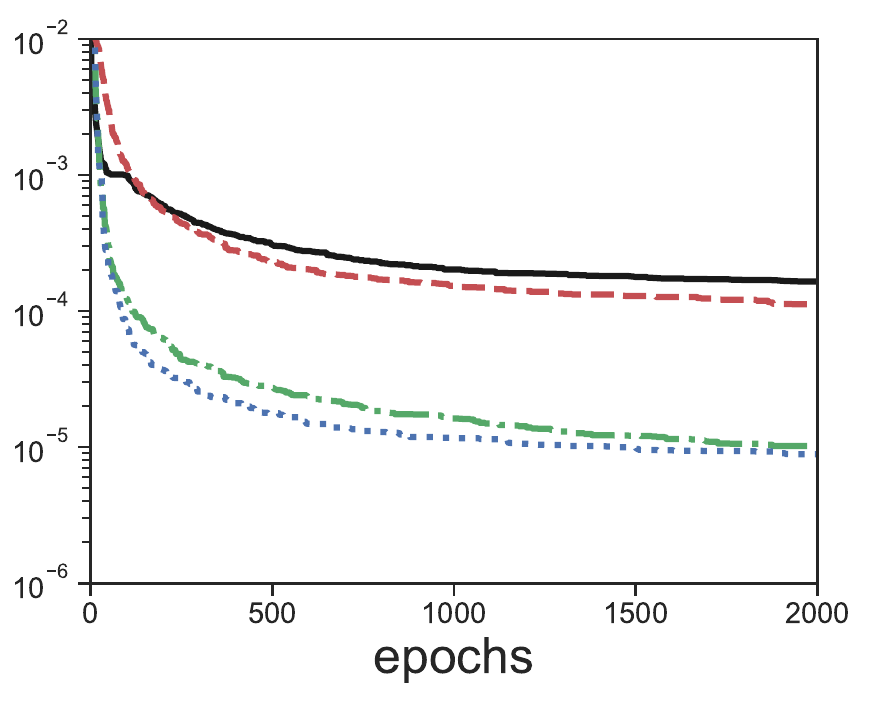}
    	\caption{M$_4$, ICNN,  $e_{\hat{h}^\gamma}$}
	\end{subfigure}   
	\begin{subfigure}[t]{0.3\textwidth}
    	\includegraphics[width=\textwidth]{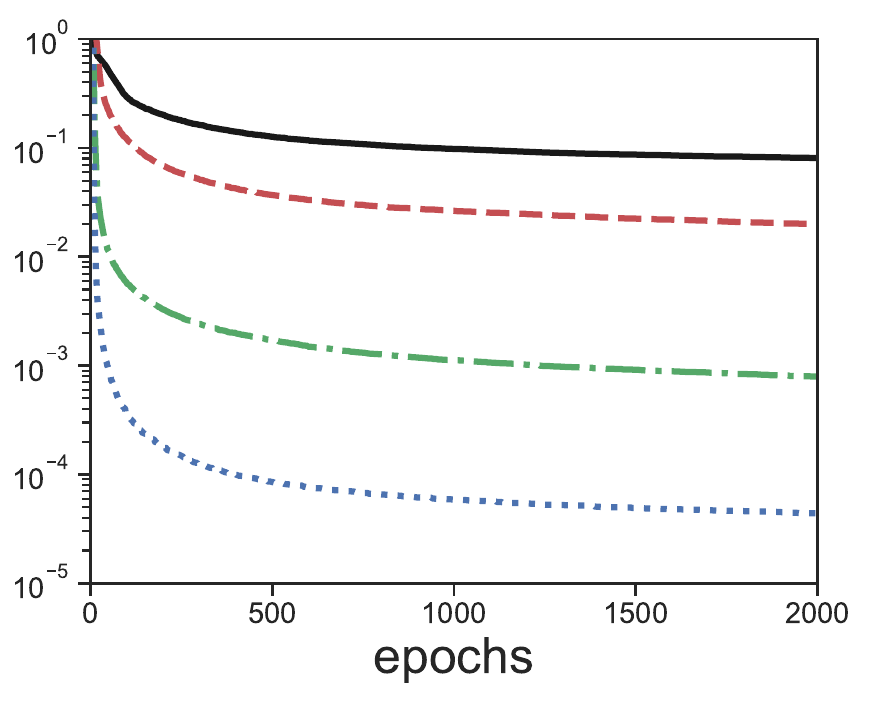}
    	\caption{ M$_4$, ICNN, $e_{\bbeta^\gamma_{\ofus}}$ }
	\end{subfigure}  
	\begin{subfigure}[t]{0.3\textwidth}
    	\includegraphics[width=\textwidth]{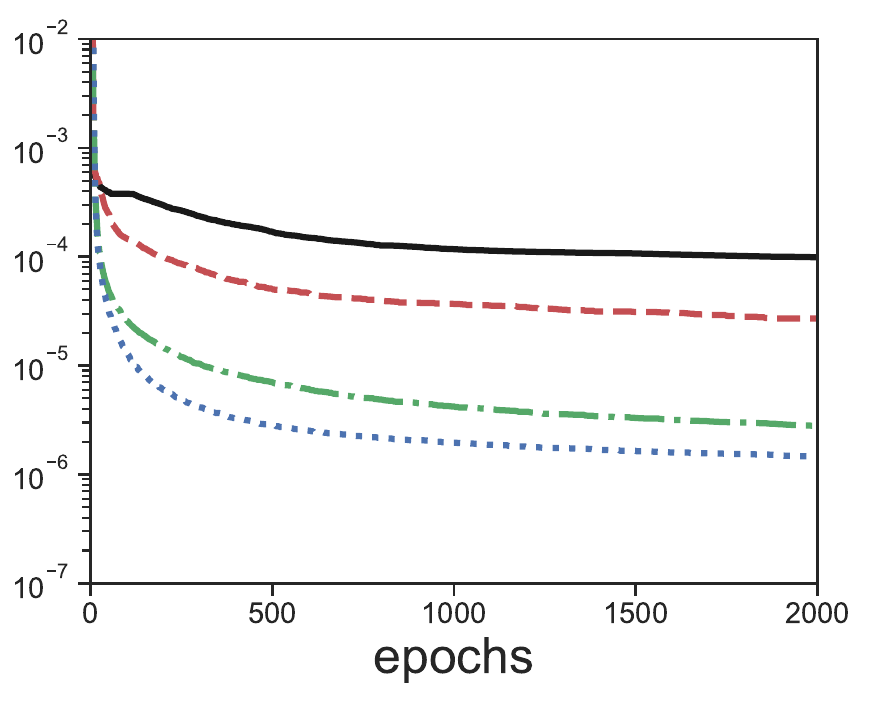}
    	\caption{ M$_4$, ICNN, ${e}_{\ofu}$}
	\end{subfigure}
	\begin{subfigure}[t]{0.3\textwidth}
    	\includegraphics[width=\textwidth]{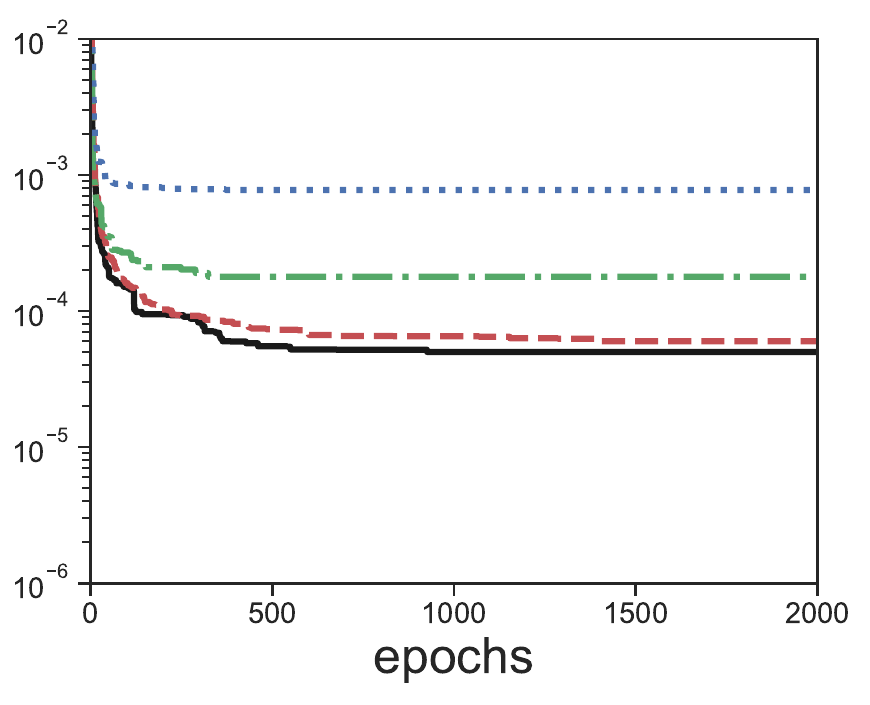}
    	\caption{M$_4$, ResNet,  $e_{\hat{h}^\gamma}$}
	\end{subfigure}   
	\begin{subfigure}[t]{0.3\textwidth}
    	\includegraphics[width=\textwidth]{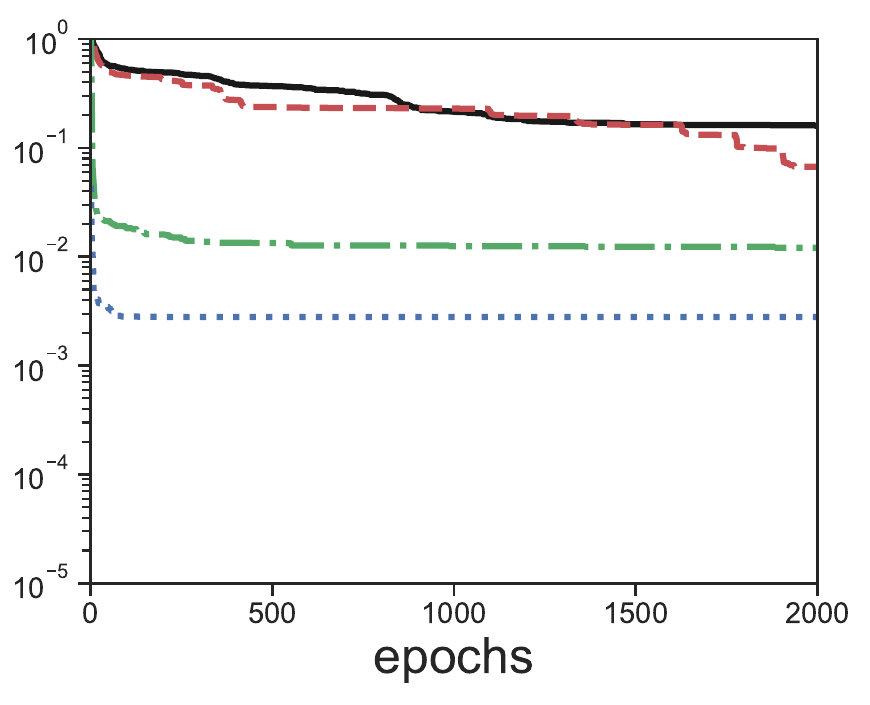}
    	\caption{ M$_4$, ResNet, $e_{\bbeta^\gamma_{\ofus}}$}
	\end{subfigure}  
	\begin{subfigure}[t]{0.3\textwidth}
    	\includegraphics[width=\textwidth]{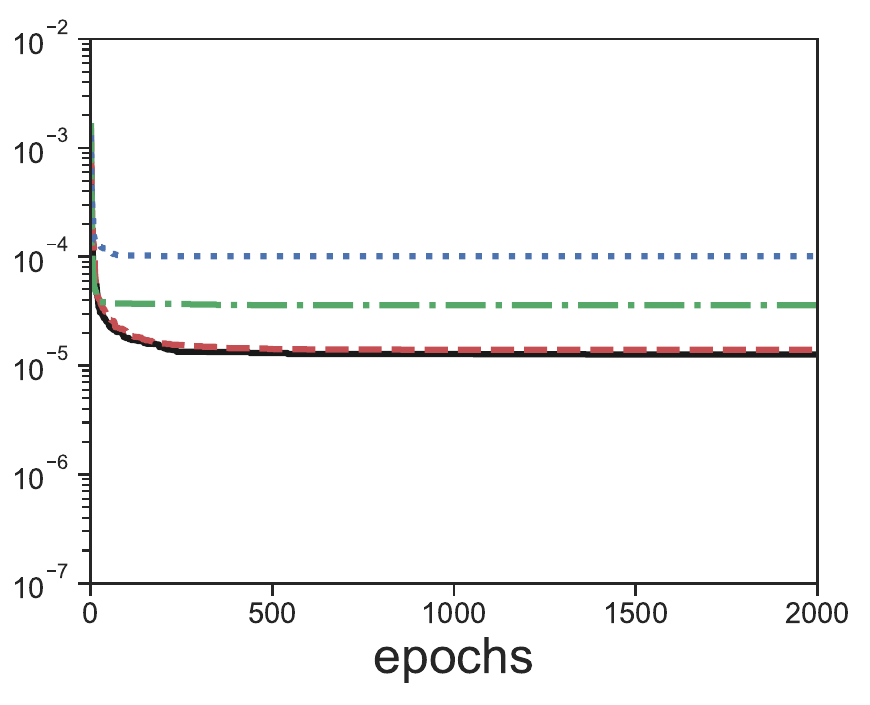}
    	\caption{ M$_4$, ResNet, ${e}_{\ofu}$}
	\end{subfigure}  
	\caption{Comparison of ICNN and ResNet-based test errors for M$_3$ (a)-(f) and M$_4$ (g)-(l) closures, with different regularization levels $\gamma$.%
 The reported error $e_{\hat{h}^\gamma}$, $e_{\bbeta^\gamma_{\ofus}}$ and ${e}_{\ofus}$ are defined inEq.~\eqref{eq_test_error_h},~\eqref{eq_test_error_beta}, and~\eqref{eq_test_error_u}, respectively.%
 The lowest test error up to the current epoch is plotted for each choice of $\gamma$.  The test errors of the ICNN model are reduced heavily by increasing $\gamma$, whereas ResNet test errors reduce only slightly. Results for the M$_2$ closure are comparable.}
	\label{fig_regularized_training_M4}
\end{figure}
The test accuracy of the input convex neural network (ICNN) is compared with a non-convex ResNet architecture that serves as a baseline. The ICNN architectures given by Algorithm~\ref{alg_network_inference_nr} consist of convex layers~\cite{pmlr_v162_schotthofer22a,Amos2017InputCN} build from two weight matrices each, i.e.,
\begin{align}
\begin{aligned}
	\hat{h}^p(\ofus) &= \mathbf{z}_M,\\
	\mathbf{z}_{k} &= \sigma_k(W_k^{\mathbf{z}} \mathbf{z}_{k-1} + W_k^{\fu} \ofus + \mathbf{b}_k), \qquad k=2,\dots, M, \\
	\mathbf{z}_{1} &= \sigma_1(W_1^\fu \ofus + \mathbf{b}_1),
\end{aligned}
\end{align}
where $W_k^{\mathbf{z}}$ has positive entries, $\sigma_k$ is convex and non-decreasing, and $W_k^{\fu}$ may attain negative values.
The ResNet architectures consist of dense layers with skip connections, i.e.,
\begin{align}\label{eq_resnet}
\begin{aligned}
	\hat{h}^p(\ofus) &= \mathbf{z}_M,\\
	\mathbf{z}_{k} &= \sigma_k(W_k \mathbf{z}_{k-1} + \mathbf{b}_k) + \mathbf{z}_{k-1} , \qquad k=1,\dots, M,\\
	\mathbf{z}_0 &= \ofus.
\end{aligned}
\end{align}
We give an overview of the neural network architectures used throughout the numerical results sections in Table~\ref{tab_nn_architecture}. 
For a given closure, the number of layers and neurons are determined in an architecture search at $\gamma=0$ and then fixed for all regularization levels for comparability, and we see in Table~\ref{tab_nn_architecture}, that the ICNN models require fewer parameters for similar or better training performance.
\begin{table}[t!]
\centering
\caption{Mean test errors  ${e}_{\hat{h}^{\gamma}},\,e_{\bbeta^\gamma_{\ofus}},\, e_{\ofus}$ of neural network-based entropy closures.  (See Eq.~\eqref{eq_test_error_h},~\eqref{eq_test_error_beta}, and~\eqref{eq_test_error_u}, respectively, for definitions.) For each architecture, $10$ repetitions are performed and in each case, the standard deviation is less than $10\%$. The smallest errors for each architecture and closure order are marked by bold.} \label{tab_closures}
\begin{tabular}{@{\extracolsep{1pt}}llcccccccc}
\toprule   
 &&\multicolumn{3}{c}{ICNN}  &\multicolumn{3}{c}{ResNet}  \\
 \cmidrule{3-5}
 \cmidrule{6-8}
 closure & $\gamma$ &  $e_{\hat{h}^\gamma}$  &  $e_{\bbeta^\gamma_{\ofus}}$ & $e_{\ofus}$ &	$e_{\hat{h}^\gamma}$  &  $e_{\bbeta^\gamma_{\ofus}}$ & $e_{\ofus}$ \\
     	\midrule
     	M$_2$& $0$  	&  $1.45\mathrm{e}{-5}$ &  $5.23\mathrm{e}{-3}$  & $1.17\mathrm{e}{-5}$  &   
                        	$1.77\mathrm{e}{-5}$ & $4.28\mathrm{e}{-3}$ & $4.37\mathrm{e}{-6}$ \\
     	M$_2$& $1\mathrm{e}{-3}$   &   $1.02\mathrm{e}{-5}$ &  $2.69\mathrm{e}{-3}$  & $8.87\mathrm{e}{-6}$  &  
                                    	$1.44\mathrm{e}{-5}$ &  $2.57\mathrm{e}{-3}$  & $4.05\mathrm{e}{-6}$\\
     	M$_2$& $1\mathrm{e}{-2}$   &   ${1.34\mathrm{e}{-6}}$ &  $9.32\mathrm{e}{-5}$  & $\mathbf{7.81\mathrm{e}{-7}}$ &  
                                    	$\mathbf{1.08\mathrm{e}{-6}}$ &  $7.78\mathrm{e}{-4}$  & $\mathbf{3.92\mathrm{e}{-6}}$ \\
     	M$_2$& $1\mathrm{e}{-1}$	&  $\mathbf{1.24\mathrm{e}{-6}}$ &   $\mathbf{5.12\mathrm{e}{-5}}$  &  $1.71\mathrm{e}{-6}$&
                                    	$3.22\mathrm{e}{-5}$ &  $\mathbf{3.57\mathrm{e}{-4}}$  & $1.65\mathrm{e}{-5}$ \\
     	\midrule
     	M$_3$& $0$  	&  	$1.19\mathrm{e}{-4}$ &   $4.05\mathrm{e}{-2}$  &  $4.73\mathrm{e}{-5}$ &  
                            	$3.92\mathrm{e}{-5}$ &  $3.23\mathrm{e}{-2}$  & $1.04\mathrm{e}{-5}$\\
     	M$_3$& $1\mathrm{e}{-3}$&  $3.52\mathrm{e}{-5}$&   $9.21\mathrm{e}{-3}$  &  $1.55\mathrm{e}{-5}$ &
                                	$\mathbf{2.55\mathrm{e}{-5}}$ &   $1.04\mathrm{e}{-2}$  &  $\mathbf{9.44\mathrm{e}{-6}}$\\
     	M$_3$& $1\mathrm{e}{-2}$&  $\mathbf{7.04\mathrm{e}{-6}}$ & $3.61\mathrm{e}{-4}$  & $1.40\mathrm{e}{-6}$ &  
                                	$4.40\mathrm{e}{-5}$ &   $3.70\mathrm{e}{-3}$  &  $1.28\mathrm{e}{-5}$\\
     	M$_3$& $1\mathrm{e}{-1}$&  ${8.09\mathrm{e}{-6}}$ &  $\mathbf{2.03\mathrm{e}{-5}}$  & $\mathbf{7.14\mathrm{e}{-7}}$ &
                                	$1.50\mathrm{e}{-4}$ &   $\mathbf{1.25\mathrm{e}{-3}}$  &  $4.51\mathrm{e}{-5}$\\
     	\midrule
     	M$_4$& $0$  	&  $1.65\mathrm{e}{-4}$ &  $8.07\mathrm{e}{-2}$  & $9.88\mathrm{e}{-5}$ &  
                        	$\mathbf{5.80\mathrm{e}{-5}}$ &   $1.64\mathrm{e}{-1}$  &  $1.26\mathrm{e}{-5}$ \\
     	M$_4$& $1\mathrm{e}{-3}$  &$1.12\mathrm{e}{-4}$ &  $2.00\mathrm{e}{-2}$  & $2.69\mathrm{e}{-5}$ &  
                                	$6.21\mathrm{e}{-5}$ &   $6.71\mathrm{e}{-2}$  &  $\mathbf{1.39\mathrm{e}{-5}}$\\
     	M$_4$& $1\mathrm{e}{-2}$   &   ${1.01\mathrm{e}{-5}}$ &  $7.94\mathrm{e}{-4}$ & $2.76\mathrm{e}{-6}$ &   
                                    	$1.79\mathrm{e}{-4}$ &   $1.12\mathrm{e}{-2}$  &  $3.56\mathrm{e}{-5}$\\
     	M$_4$& $1\mathrm{e}{-1}$	&  $\mathbf{8.87\mathrm{e}{-6}}$ &  $\mathbf{4.39\mathrm{e}{-5}}$ & $\mathbf{1.46\mathrm{e}{-6}}$&                                             	$7.75\mathrm{e}{-4}$ &   $\mathbf{2.79\mathrm{e}{-3}}$  &  $1.01\mathrm{e}{-4}$\\
\bottomrule
\end{tabular}

\end{table}
\begin{table}[t!]
\centering
 \caption{Mean test errors ${e}_{\hat{h}^{0,p}},\,{e}_{\bbeta^{0,p}_{\ofus}},\, {e}_{\ofus^{0,p}}$ of neural network-based entropy closures  against the \textit{non-regularized reference solution}.   (See Eq.~\eqref{eq_combined_test_error_h},~\eqref{eq_combined_test_error_beta}, and~\eqref{eq_combined_test_error_u}, respectively, for definitions.). For each architecture, $10$ repetitions are performed and in each case, the standard deviation is less than $10\%$. The smallest errors for each architecture and closure order are marked in bold.
 Remark that we have by definition ${e}_{\hat{h}^{0,p}}={e}_{\hat{h}^\gamma}$ for $\gamma=0$, and correspondingly  ${e}_{\bbeta^{0,p}_{\ofus}}={e}_{\bbeta^{\gamma}_{\ofus}}$, and ${e}_{\ofus^{0,p}}={e}_{\ofus^{\gamma}}$.} \label{tab_closures_reg}
\begin{tabular}{@{\extracolsep{1pt}}llcccccccc}
\toprule   
 &&\multicolumn{3}{c}{ICNN}  &\multicolumn{3}{c}{ResNet}  \\
 \cmidrule{3-5}
 \cmidrule{6-8}
 closure & $\gamma$ &  $ {e}_{\hat{h}^{0,p}}$  &  ${e}_{\bbeta^{0,p}_{\ofus}}$ & ${e}_{\ofus^{0,p}}$ &	${e}_{\hat{h}^{0,p}}$  &  ${e}_{\bbeta^{0,p}_{\ofus}}$ & ${e}_{\ofus^{0,p}}$ \\
     	\midrule
M$_2$& $0$  	&   $1.45\mathrm{e}{-5}$ &  $5.23\mathrm{e}{-3}$  & $1.17\mathrm{e}{-5}$  &   $1.77\mathrm{e}{-5}$ & $4.28\mathrm{e}{-3}$ & $4.37\mathrm{e}{-6}$ \\    
     	\cmidrule{3-5} \cmidrule{6-8}
M$_2$& $1\mathrm{e}{-3}$ &$\mathbf{1.78\mathrm{e}{-4}}$ &  $\mathbf{1.68\mathrm{e}{-2}}$  & $\mathbf{1.34\mathrm{e}{-5}}$  
                     	&$\mathbf{9.33\mathrm{e}{-5}}$ &  $\mathbf{1.12\mathrm{e}{-2}}$  & $\mathbf{7.94\mathrm{e}{-5}}$\\
M$_2$& $1\mathrm{e}{-2}$ &${3.39\mathrm{e}{-3}}$ &  ${2.76\mathrm{e}{-1}}$  & ${1.69\mathrm{e}{-4}}$    
                     	&${3.44\mathrm{e}{-3}}$ &  ${2.92\mathrm{e}{-1}}$  & ${1.09\mathrm{e}{-4}}$ \\
M$_2$& $1\mathrm{e}{-1}$ &$7.97\mathrm{e}{-2}$ &   ${1.13\mathrm{e}{-0}}$  &  $5.5\mathrm{e}{-3}$
                     	&$7.7\mathrm{e}{-2}$ &  ${1.12\mathrm{e}{-0}}$  & $8.83\mathrm{e}{-3}$ \\
     	\midrule
M$_3$& $0$  	&	$1.19\mathrm{e}{-4}$ &   $4.05\mathrm{e}{-2}$  &  $4.73\mathrm{e}{-5}$ &  $3.92\mathrm{e}{-5}$ &  $3.23\mathrm{e}{-2}$  & $1.04\mathrm{e}{-5}$\\           	\cmidrule{3-5} \cmidrule{6-8}
M$_3$& $1\mathrm{e}{-3}$ &$\mathbf{2.01\mathrm{e}{-4}}$  &   $\mathbf{1.10\mathrm{e}{-1}}$  &  $\mathbf{2.01\mathrm{e}{-5}}$   
                     	&$\mathbf{6.58\mathrm{e}{-5}}$ &   $\mathbf{1.32\mathrm{e}{-1}}$  &  $\mathbf{7.49\mathrm{e}{-5}}$\\
M$_3$& $1\mathrm{e}{-2}$ &${3.84\mathrm{e}{-3}}$  & ${5.82\mathrm{e}{-1}}$	& ${9.45\mathrm{e}{-5}}$    
                     	&${3.86\mathrm{e}{-3}}$ &   ${5.7\mathrm{e}{-1}}$  &  ${1.12\mathrm{e}{-4}}$\\
M$_3$& $1\mathrm{e}{-1}$&${7.47\mathrm{e}{-2}}$ &  ${1.32\mathrm{e}{-0}}$  & ${3.06\mathrm{e}{-3}}$
                    	&$8.39\mathrm{e}{-2}$ &   ${1.32\mathrm{e}{-0}}$  &  $3.13\mathrm{e}{-3}$\\
     	\midrule
M$_4$& $0$  	&  $1.65\mathrm{e}{-4}$ &  $8.07\mathrm{e}{-2}$  & $9.88\mathrm{e}{-5}$ &  ${5.80\mathrm{e}{-5}}$ &   $1.64\mathrm{e}{-1}$  &  $1.26\mathrm{e}{-5}$  \\  \cmidrule{3-5} \cmidrule{6-8}
M$_4$& $1\mathrm{e}{-3}$ &$\mathbf{2.69\mathrm{e}{-4}}$ &  $\mathbf{4.07\mathrm{e}{-1}}$  & $\mathbf{3.1\mathrm{e}{-5}}$
                     	&$\mathbf{2.12\mathrm{e}{-4}}$ &   $\mathbf{4.10\mathrm{e}{-1}}$  &  $\mathbf{2.48\mathrm{e}{-5}}$\\
M$_4$& $1\mathrm{e}{-2}$ &${4.75\mathrm{e}{-3}}$ &  ${1.29\mathrm{e}{0}}$ & ${6.11\mathrm{e}{-5}}$
                     	&${2.78\mathrm{e}{-3}}$ &   ${1.15\mathrm{e}{0}}$  &  ${1.04\mathrm{e}{-4}}$\\
M$_4$& $1\mathrm{e}{-1}$ &$6.71\mathrm{e}{-2}$ &  ${1.85\mathrm{e}{0}}$&	${1.73\mathrm{e}{-3}}$
                     	&$7.35\mathrm{e}{-2}$ &   ${1.83\mathrm{e}{0}}$&  $3.06\mathrm{e}{-3}$\\
\bottomrule
\end{tabular}

\end{table}
Each model for each closure and regularization level is initialized with normally distributed weights and then trained in single precision on an RTX3090 GPU for a total of $2000$ epochs with a batch size of $256$ on the loss function function of Eq.~\eqref{eq_loss_icnn}. The experiment is repeated $10$ times for each closure, regularization, and model until the results have a standard deviation of less than $10\%$. In what follows we report average results over the 10 trials. Table~\ref{tab_closures} displays the mean test errors of the training runs, i.e.,
\begin{align}
e_{\hat{h}^\gamma} &= \frac{1}{T_{\rm{test}}}\sum_{i=1 }^{T_{\rm{test}}}\norm{\hat{h}^\gamma(\ofu_{\#,i}) -\hat{h}^p(\ofu_{\#,i})}^2, \label{eq_test_error_h}\\
e_{\bbeta^\gamma_{\ofus}}&=\frac{1}{T_{\rm{test}}}\sum_{i=1 }^{T_{\rm{test}}}\norm{ {\bbeta}_{\overline{\fu}_{\#,i}}^\gamma-{\bbeta}_{\overline{\fu}_{\#,i}}^p}^2,\label{eq_test_error_beta} \\
e_{\ofus}&=\frac{1}{T_{\rm{test}}}\sum_{i=1 }^{T_{\rm{test}}}\norm{{\ofus}_{,i} -  \boldsymbol\psi^\gamma({\bbeta}_{\overline{\fu}_{\#,i}}^p)}^2,\label{eq_test_error_u}
\end{align}
where $T_{\rm{test}}$ is the size of the test data-set.
Figure~\ref{fig_regularized_training_M4} shows the test accuracy for ICNN and ResNet approximations of entropy $e_{\hat{h}^\gamma}$, fruncated Lagrange multiplier  $e_{\bbeta^\gamma_{\ofus}}$, and reconstructed moment $e_{\ofus}$ of the M$_3$ and M$_4$ closure for different regularization parameters. The plots display the best test error until the current epoch.
We can see that the test error reduces by several orders of magnitude for larger values of $\gamma$ for $h^\gamma$, $\bbeta^\gamma_{\ofus}$ as well as $\ofus$ for ICNNs for the M$_2$, M$_3$, and M$_4$ closure. We believe that the steeper slope of the entropy function of the non-regularized problem is harder to approximate; see Fig.~\ref{fig_entropy_gammas}.  This trend is not as pronounced for the ResNet approximations, where models exhibit inconsistent approximation performance for $\hat{h}^\gamma$, $\bbeta^\gamma_{\ofus}$ and $\overline{\fu}$. Furthermore,  highly regularized ICNN approximations outperform ResNet approximations by an order of magnitude, especially for M$_3$ and M$_4$ closures.

Finally, we measure the combined neural network approximation and regularization error in Table~\ref{tab_closures_reg}, i.e.,
\begin{align}
{e}_{\hat{h}^{0,p}} &= \frac{1}{T_{\rm{test}}}\sum_{i=1 }^{T_{\rm{test}}}\norm{\hat{h}^{\gamma=0}(\ofu_{\#,i}) -\hat{h}^p(\ofu_{\#,i})}^2,\label{eq_combined_test_error_h} \\
{e}_{\bbeta^{0,p}_{\ofus}}&=\frac{1}{T_{\rm{test}}}\sum_{i=1 }^{T_{\rm{test}}}\norm{ {\bbeta}_{\overline{\fu}_{\#,i}}^{\gamma=0}-{\bbeta}_{\overline{\fu}_{\#,i}}^p}^2,\label{eq_combined_test_error_beta} \\
{e}_{\ofus^{0,p}} &=\frac{1}{T_{\rm{test}}}\sum_{i=1 }^{T_{\rm{test}}}\norm{{\ofus}_{,i} -  \boldsymbol\psi^{\gamma=0}({\bbeta}_{\overline{\fu}_{\#,i}}^p)}^2.\label{eq_combined_test_error_u}
\end{align}
Using Theorem~\ref{theo_reg_ansatz_error} for the fruncated moments, the approximation error  in $\ofus$ can be reformulated as
\begin{align}
  \norm{{\ofus -   \boldsymbol\psi^{\gamma=0}({\bbeta}_{\overline{\fu}_{\#,i}}^p)}}&\leq \norm{\ofus -{\ofu}^\gamma_{\#}} +\norm{{\ofu}^\gamma_{\#} -  \boldsymbol\psi^{\gamma=0}({\bbeta}_{\overline{\fu}_{\#,i}}^p)}
 \leq  \gamma M + n\left(1-\exp\left(-\frac{\gamma}{2}M^2\right)\right)+\norm{\overline{\fu}^\gamma_\# -  \boldsymbol\psi^{\gamma=0}({\bbeta}_{\overline{\fu}_{\#,i}}^p)}.
\end{align}
%where $\norm{\overline{\fu}^\gamma -\overline{\fu}^p}\approx (e_{\ofus})^{\frac{1}{2}}$. Comparing with the results of Table~\ref{tab_closures}, we observe that $\hat{e}_{\ofu}^\gamma$ is dominated by the regularization error $\gamma M$, which is confirmed by the results of Table~\ref{tab_closures_reg}.
{Considering Table~\ref{tab_closures_reg} and Table~\ref{tab_closures}, we observe that $\norm{\overline{\fu}^\gamma_\# -  \boldsymbol\psi^{\gamma=0}({\bbeta}_{\overline{\fu}_{\#,i}}^p)}\leq\gamma M$. Therefore we expect the regularization error to have a larger influence on the simulation than the neural network approximation error for $\gamma>1\rm{e}{-3}$}. A trade-off between neural network approximation and regularization errors is required.

\subsection{Line source test case}
\label{sec_linesource}
\begin{table}[t!]
\caption{Setup for the numerical test cases} \label{tab_linesource_setup}

\centering
\begin{tabular}{@{\extracolsep{2pt}}lccccc}
\toprule   
    Test-Case &  CFL & $t_f$&$\Delta_t$ & $\Delta_x$   \\
             	\midrule
     	 Line source &$0.3$ & $0.75$ &$1.68\mathrm{e}{-3}$ & $1\mathrm{e}{-2}$ \\ %& $92554$
     	Hohlraum &$0.2$ & $2$ &$9.5\mathrm{e}{-4}$  & $5\mathrm{e}{-3}$ \\  %& $97626$
\bottomrule
\end{tabular}
\end{table}

\begin{figure}[t!]
\begin{subfigure}[t]{0.31\textwidth}	\centering
    \includegraphics[height=5cm]{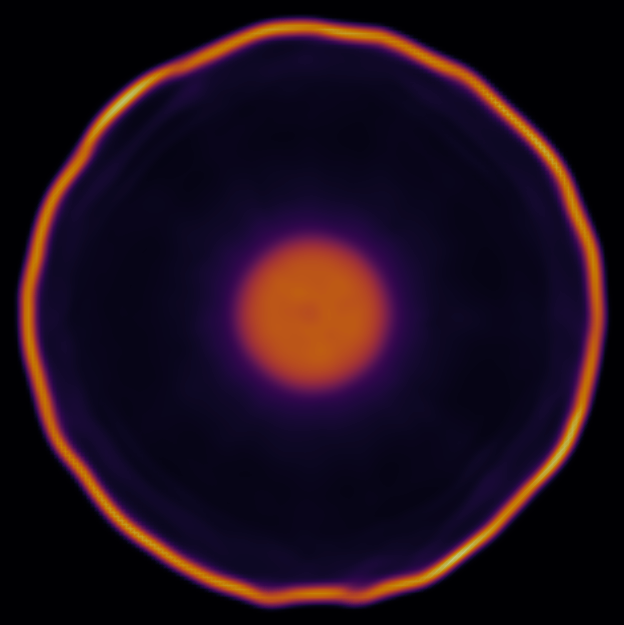}
        \caption{ICNN-based M$_2$}
\end{subfigure}
\begin{subfigure}[t]{0.31\textwidth}	\centering
    \includegraphics[height=5cm]{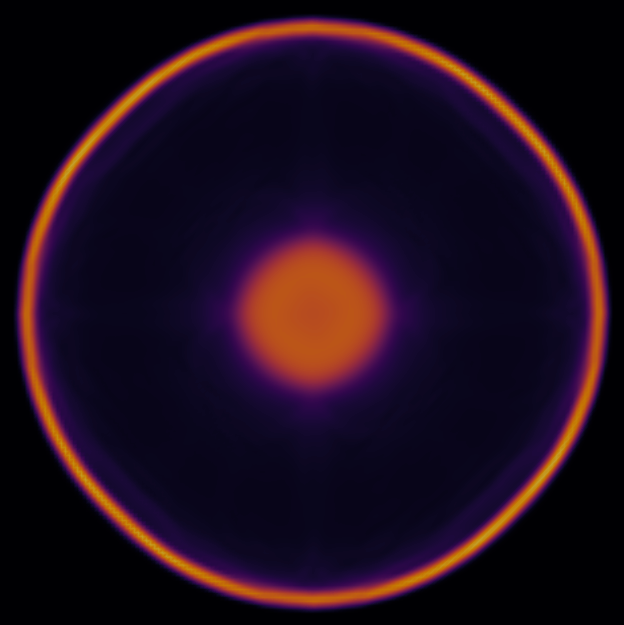}
        \caption{Rotated, ICNN-based M$_2$}
\end{subfigure}
\begin{subfigure}[t]{0.31\textwidth}	\centering
    \includegraphics[height=5cm]{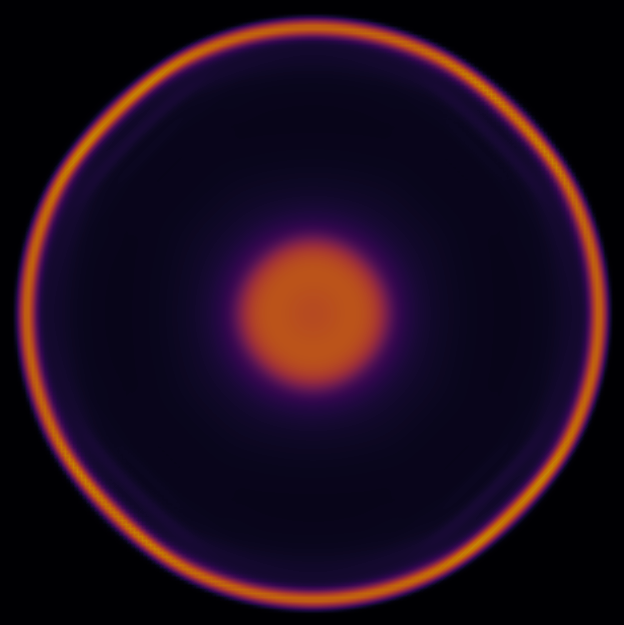}
        \caption{Newton-based M$_2$, reference}
\end{subfigure}
\begin{subfigure}[t]{0.03\textwidth}
    \includegraphics[height=5cm]{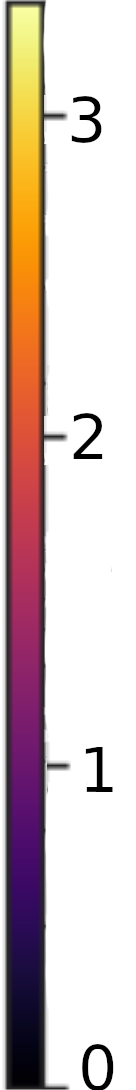}
    {\vspace{.02cm}}
\end{subfigure}
 \centering
       	\caption{Line source test case. (a) ICNN-based M$_2$ simulation with $\gamma=1e-3$ of the  line source test case. The test case dynamic at $t_f$ is captured well by the ICNN-simulation. At the wavefront, slight asymmetries are observed since rotational invariance is not enforced by the neural network-based simulation. (b) Rotatationally invariant ICNN-based simulation with $\gamma=1e-3$ of the line source test case. (c) Reference solution computed with a Newton optimizer.}
        \label{fig_dynamical_ansatz_linesource}
\end{figure}
The line source benchmark~\cite{Ganapol} is a torture test for numerical methods for kinetic equations and exposes the advantages and disadvantages of different velocity space discretizations~\cite{GarretHauck,osti_800993, PPNHauck, MCCLARREN20105597, RADICE2013648, Kitrt_paper}. 
We test the proposed neural network-based entropy closures on this problem.
The physical setup is given by an initial pulse of particles distributed isotropically along an infinite line in three-dimensional space. The particles travel through a homogeneous material medium with a constant scattering kernel $\sigma_{\textup{s}}=1$. The collision operator is given by
\begin{align}\label{eq_collision_mod_line}
    	Q(f)(\mathbf{v})=\int_{\mathbb{S}^2} [ f({\fv}_*) -f({\fv}) ] d{\fv}_*.
\end{align}
When the $x_3$ axis is aligned with the infinite line, the flux in the $x_3$ direction is zero and thus the resulting particle distribution is constant in the $x_3$ direction. Therefore, we consider a reduced moment system in the remainder of this section, in which the spatial domain is reduced to two dimensions on the $x_1$-$x_2$ plane.
The reduced moment system is given by
\begin{align}\label{eq_momentRT_linesource}
	\partial_t \fu + \nabla_\mathbf{x}\cdot\inner{\mathbf{v} \otimes{\fm}f_{\fu}^p}= \inner{{\fm}Q(f_{\fu}^p)},
\end{align}
where the analytic initial condition is the moment $\fu_0 = \inner{\fm f_0} $ of an isotropic Dirac pulse $f_0$. For numerical simulations, $f_0$ is approximated by a steep Gaussian 
\begin{align}\label{eq_linesource_ic}
	f_0&\approx\max \menge{\epsilon,\frac{1}{4\pi c}\exp \left(\frac{-\norm{\mathbf x}^2}{4\pi\epsilon}\right)}
 ,\qquad\forall \mathbf x\in \mathbf X,{\fv}\in \mathbb{S}^2,
\end{align}
with $c=0.0032$ and floor value $\epsilon=1\rm{e}-4$. 
% The spatial domain is $2$D in the $x_1$-$x_2$ plane.
The computational domain is chosen as $\mathbf X=[-1,1]^2$, such that the wavefront does not reach the domain boundary at the final time $t_f$. We use zero-value Dirichlet boundary conditions. %Lastly, the moments with respect to the velocity basis ${\fm}({\fv})$ are computed to yield the initial conditions for the M$_N$ solver. 
The solver settings are displayed in Table~\ref{tab_linesource_setup}.
In general, the M$_N$ methods perform comparatively well in the  line source test case~\cite{GarretHauck} and accurately track the wavefront, whereas linear Galerkin-type closures as P$_N$ tend to oscillate and nodal methods as S$_N$ exhibit ray effects.

We compare the M$_2$ entropy closure computed with a Newton optimizer with $\gamma=1\rm{e}-3$ to an ICNN-based approximation in Fig.~\ref{fig_dynamical_ansatz_linesource}(c) and (a).  The test case dynamic at $t_f$ is captured well by the ICNN-based solution. However, slight asymmetries are observed near the wavefront since rotational invariance is not enforced by the ICNN-based simulation.
In Fig.~\ref{fig_dynamical_ansatz_linesource}(b), we show the result from an ICNN-based M$_2$ entropy closure with rotational invariance enforced via post-processing. When compared to the standard ICNN-based solution in Fig.~\ref{fig_dynamical_ansatz_linesource}(a), the rotationally invariant solution is qualitatively closer to the reference solution in Fig.~\ref{fig_dynamical_ansatz_linesource}(c), which illustrates the importance of preserving structural properties of the underlying system in the closure procedure. The current post-processing technique we used to generate the rotationally invariant solution in Fig.~\ref{fig_dynamical_ansatz_linesource}(b) does not maintain the entropy dissipation property and does not extend to general three-dimensional problems. The development of an efficient, rotationally-invarant, entropy-based closure is in the scope of the future work.

% Without going into mathematical detail, we would like to remark the following about enforcement of rotational invariances in neural network-based approximations of higher order entropy closures. 
% First, the presented rotaded closure evaluates the entropy approximation at moment vector $\ofus$ after a rotation to a reference point in the first order space of $\ofus$, that is $\ofus_1$. This rotation matrix is used to construct a rotation matrix for the high order parts of the moment vector using the rotation properties of the spherical harmonics. The obtained approximation of the Lagrange multiplier is then rotated back using the transpose of the original rotation matrix. 
% While it is possible to show uniqueness and convexity of this construction for the M$_1$ closure, convexity is lost for higher order closures, and the ansatz is only well defined for a two dimensional velocity space. We consider the construction of a convex, rotationally invariant approximation of the entropy closure a non-trivial challenge that is left for future research. The results in Fig.~\ref{fig_dynamical_ansatz_linesource} indicate that the construction of such closure is relevant for high accuracy approximations. 

%The rotationally invariant ICNN-based closure maintains M$_2$ accuracy, 
%Enforcing rotation invariance is beneficial to the overall solution quality, as a comparison with the not rotationally invariant ICNN-based closure demonstrates, although the closure is not guaranteed to be convex in case of the M$_2$ closure.

\subsection{Hohlraum test case}

\begin{figure}[th!]
	\centering
	\begin{subfigure}{0.3\textwidth}\centering
    	\includegraphics[width=0.6\textwidth]{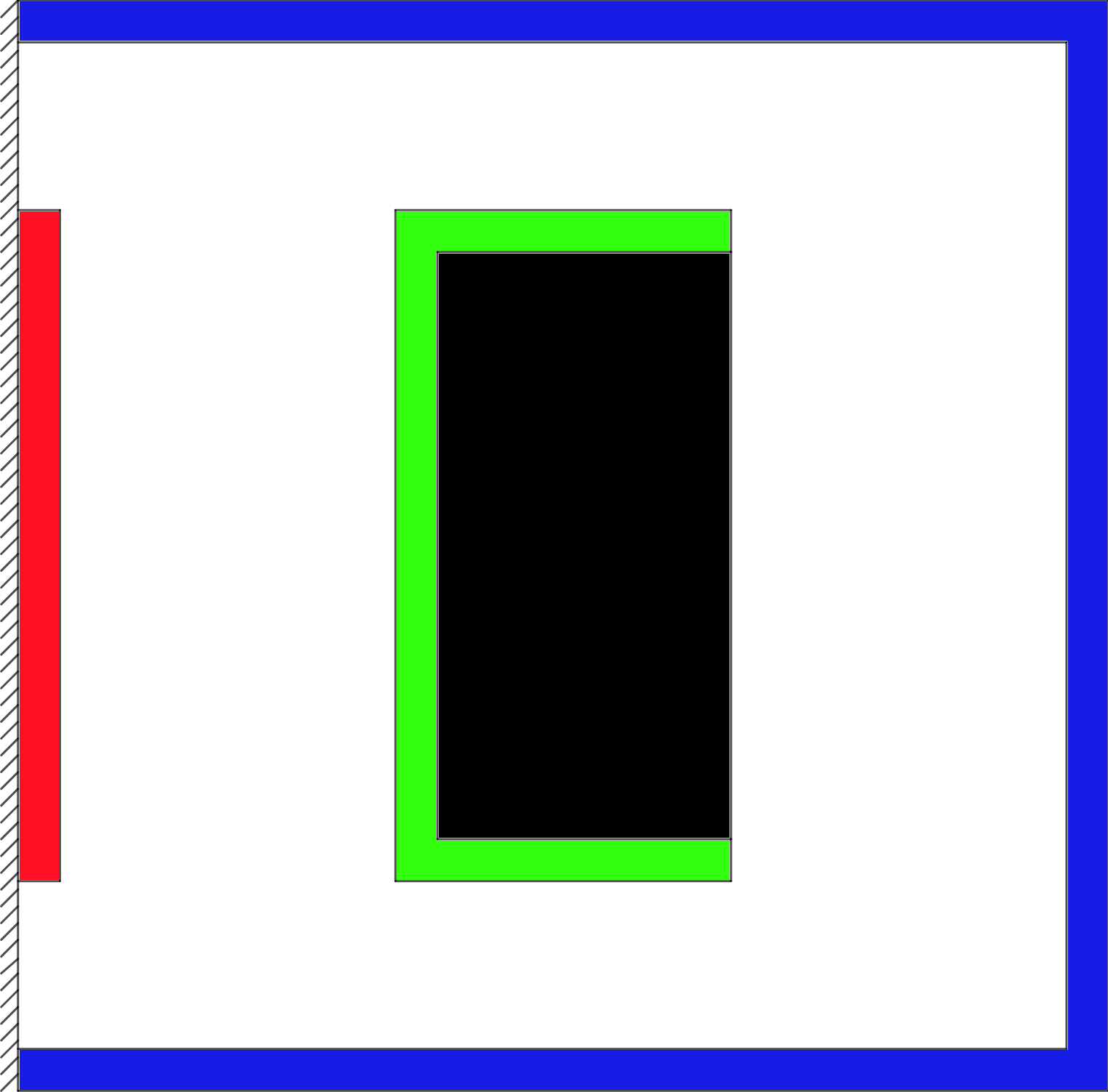}
   	\caption{Hohlraum Geometry}	\label{fig_hohlraum_geometry}
	\end{subfigure}  
 	\begin{subfigure}{0.3\textwidth}
	\centering
	\begin{tabular}{@{\extracolsep{1pt}}cccc}
	\toprule   
   	   Region & $\sigma_{\textup{t}}$ & $\sigma_{\textup{s}}$ &$\sigma_{\textup{a}}$   \\
	\midrule
    	white & $0.1$  & $0.1$ & $0.0$ \\
    	red & $100.0$  & $95.0$ & $5.0$ \\
    	green & $100.0$  & $90.0$ & $10.0$ \\
    	blue & $100.0$  & $0.0$ & $100.0$ \\
    	black & $100.0$  & $50.0$ & $50.0$ \\
	\bottomrule
	\end{tabular}
    	\caption{Material properties}    \label{table_hohlraum_color_codes}
	\end{subfigure}  
	\begin{subfigure}{0.3\textwidth}\centering
    	\includegraphics[width=0.7\textwidth]{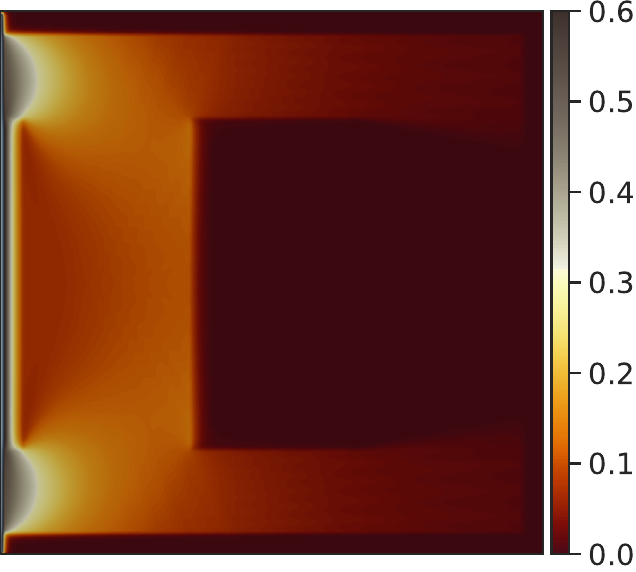}
    	\caption{S$_{60}$ reference solution}\label{fig_reference_sol_S50}
	\end{subfigure}
	\caption{Setup and reference solution for the hohlraum test case.  }\label{sec_num_hohlraum}
\end{figure}

The hohlraum test case~\cite{crockatt2020improvements} is a simplified version of hohlraum configurations used in nuclear fusion devices. Originally proposed in~\cite{osti_800993} was a coupled system of radiative transfer and an energy equation for the background material. A subsequently simplified version is described in~\cite{hohlraumHauckMclarren,osti_1550352,osti_1642263}, where the nonlinear thermal absorption and re-emission of radiation are replaced by particle scattering. The corresponding test-case design is displayed in Fig.~\ref{sec_num_hohlraum}. The collision operator is given by
\begin{align}\label{eq_collision_mod}
    	Q(f)(\mathbf{v})=\int_{\mathbb{S}^2}\sigma_{\textup{s}}(\mathbf{x}) \left[f({\fv}_*) -f({\fv})\right] d{\fv}_*,
\end{align}
where the isotropic collision kernel  $k(\mathbf{x},\fv\cdot\fv_*)=\sigma_{\textup{s}}(\mathbf{x})$ has a spatial dependence as specified in  the color-coded areas described in Fig.~\ref{fig_hohlraum_geometry} and Table~\ref{table_hohlraum_color_codes}.
Together with the space-dependent absorption $\sigma_{\textup{a}}({\mathbf{x}})$ this model yields the moment system
\begin{align}\label{eq_momentRT_hohlraum}
	\partial_t \fu + \nabla_\mathbf{x}\cdot\inner{\mathbf{v} \otimes{\fm}f_{\fu}^p}= \inner{{\fm}Q(f_{\fu}^p)} - \sigma_{\textup{a}} \fu,
\end{align}
with the collision operator $Q$ given by Eq.~\eqref{eq_collision_mod}. We have $\sigma_{\textup{t}}=\sigma_{\textup{a}}+\sigma_{\textup{s}}$. The spatial domain $\mathbf{X}=[-0.65,0.65]^2$ is two-dimensional. The initial condition is the moment $\fu_0 = \inner{\fm f_0} $ with $f_0=\epsilon=1\rm{e}-4$. The boundary conditions are zero valued Dirichlet conditions on all boundaries marked in blue. On the left hand side boundary, the incoming data is described by $ f_{\textup{BC}}(\fx,\fv,t)= 1.0$ for $\fx=[x_1=-0.65, -0.65\leq x_2\leq -0.65]$, $\fv\cdot [0,1]>0$ and $t\leq 0$. 

The reference solution for the hohlraum test case is a discrete ordinates simulation of order $60$, i.e. S$_{60}$,~\cite{lewis1984computational} and is displayed in Fig.~\ref{fig_reference_sol_S50}.

\subsubsection{Simulation quality of the regularized, ICNN-based M$_N$ method}
\begin{figure}
	\centering
	\begin{subfigure}[t]{0.226\textwidth}\centering
    	\includegraphics[height=3.7cm]{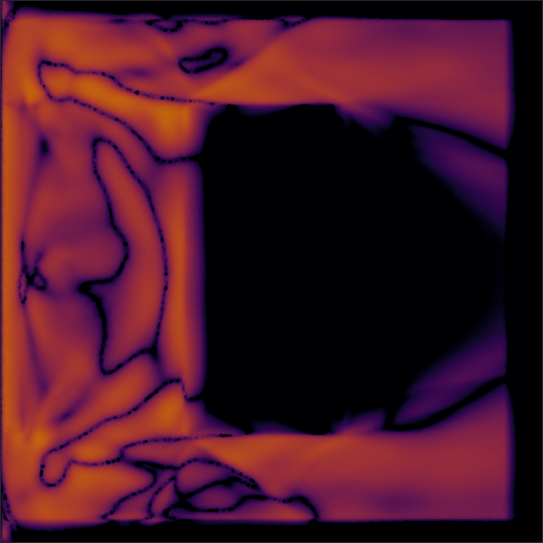}
    	\caption{$\abs{u_0^{\rm{ICNN}}-u_0^{\rm{Newton}}}$}
	\end{subfigure}
 	\begin{subfigure}[t]{0.226\textwidth}\centering
    	\includegraphics[height=3.7cm]{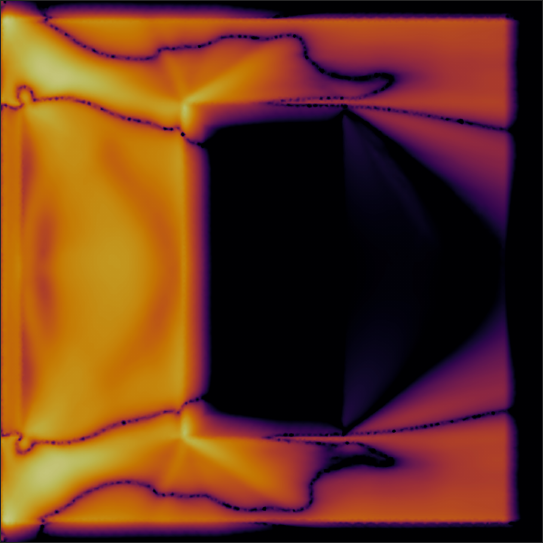}
    	\caption{$\abs{u_0^{\rm{ref}}-u_0^{\rm{Newton}}}$}
	\end{subfigure}
	\begin{subfigure}[t]{0.226\textwidth}\centering
    	\includegraphics[height=3.7cm]{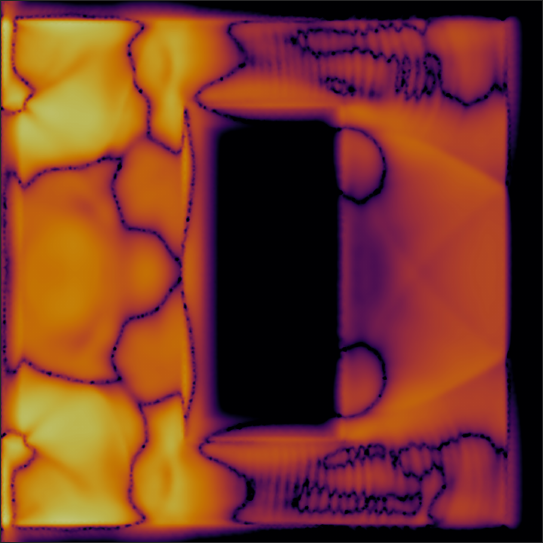}
    	\caption{$\abs{u_0^{\rm{ICNN}}-u_0^{\rm{Newton}}}$}
	\end{subfigure}
 	\begin{subfigure}[t]{0.265\textwidth}\centering
    	\includegraphics[height=3.7cm]{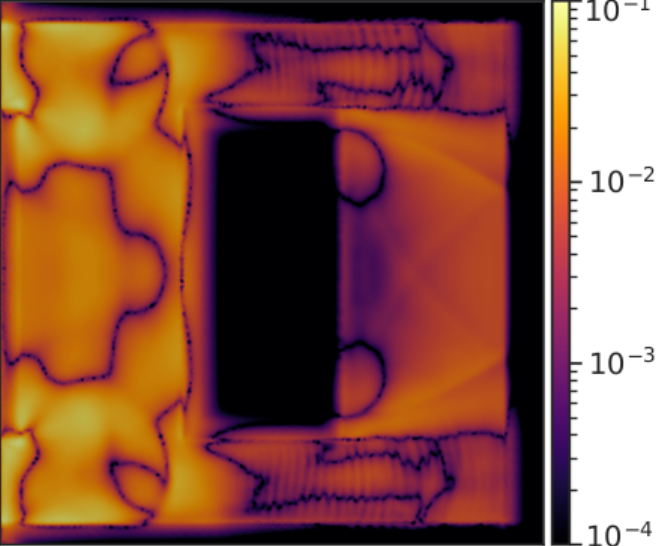}
    	\caption{$\abs{u_0^{\rm{ref}}-u_0^{\rm{Newton}}}$}
	\end{subfigure}
	\caption{Regularized entropy closures with ${\gamma=1\rm{e}{-3}}$. M$_2$ closure is displayed in a) and b), and M$_3$ in c) and d). Fig. a) and c) display the absolute difference between Newton-based and ICNN-based simulation, and Fig. b) and d) display the absolute difference between Newton-based simulation and the S$_{60}$ reference solution. The network approximation error (a) and (c) is much smaller than the difference between M$_N$ and S$_N$ models (b) and (d).}
	\label{fig_hohlraum_Newton_ICNN_S50}
\end{figure}
First, we evaluate the difference between the order zero moment of an ICNN-based entropy closure, Newton-based entropy closure with regularization parameter $\gamma=1\rm{e}-3$ and the S$_{60}$ reference solution in Fig.~\ref{fig_hohlraum_Newton_ICNN_S50}. For the ICNN-based simulation, Algorithm~\ref{alg_network_inference_nr} is used. One sees that the difference between ICNN and Newton-based regularized M$_2$ simulation is smaller than the absolute difference between the Newton-based solution and the S$_{60}$ reference solution. In the case of the regularized M$_3$ closure, both errors are in the same order of magnitude.
\begin{figure}
	\centering
	\begin{subfigure}[t]{0.226\textwidth}\centering
    	\includegraphics[height=3.7cm]{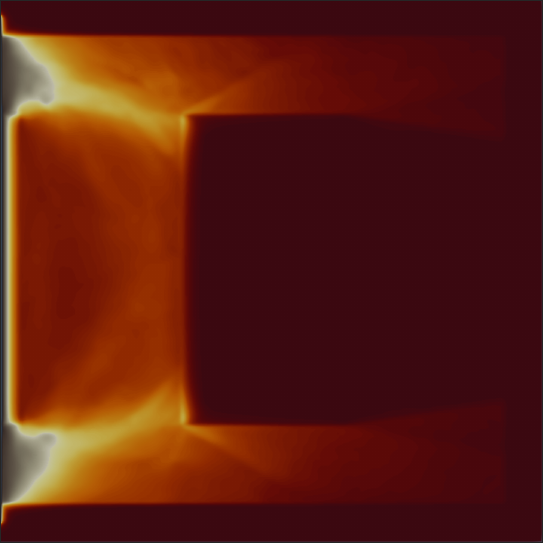}
    	\caption{${\gamma=0}$}
	\end{subfigure}
 	\begin{subfigure}[t]{0.226\textwidth}\centering
    	\includegraphics[height=3.7cm]{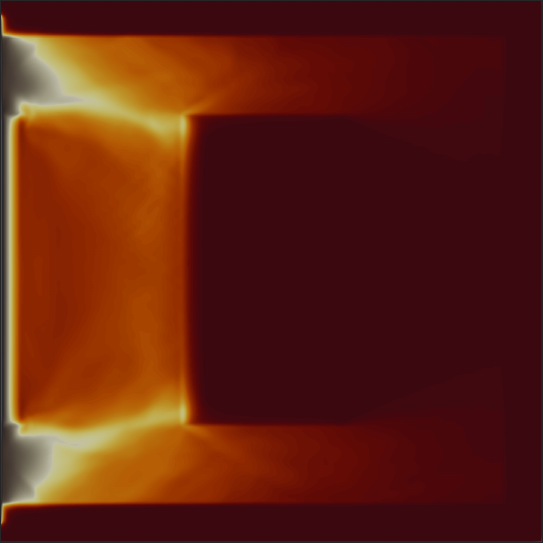}
    	\caption{${\gamma=1\mathrm{e}{-3}}$}
	\end{subfigure}
    \begin{subfigure}[t]{0.226\textwidth}\centering
    	\includegraphics[height=3.7cm]{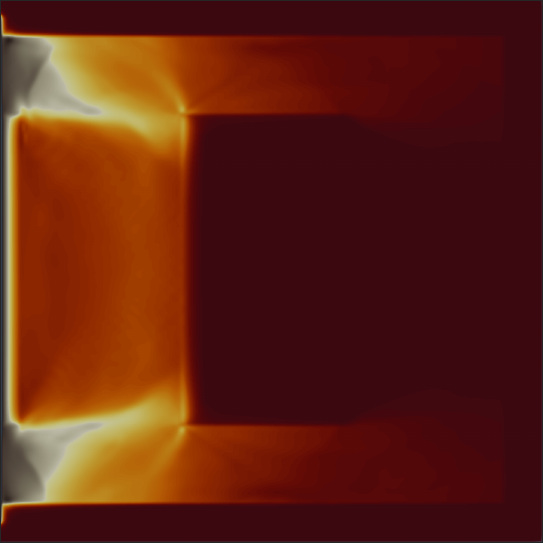}
    	\caption{${\gamma=1\mathrm{e}{-2}}$}
	\end{subfigure}
    \begin{subfigure}[t]{0.266\textwidth}\centering
    	\includegraphics[height=3.7cm]{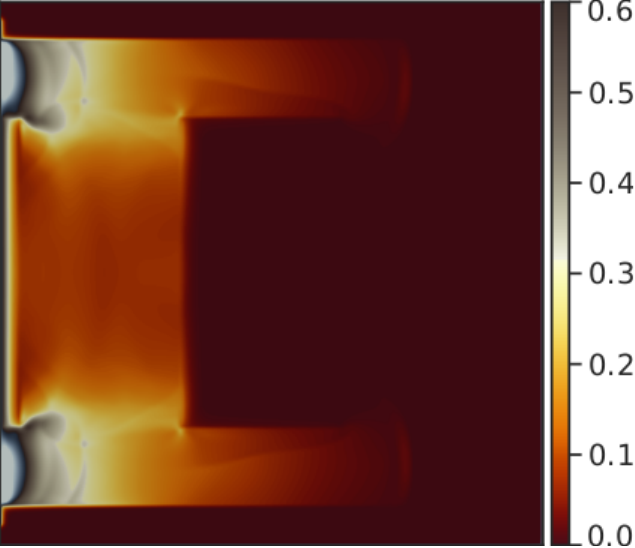}
    	\caption{${\gamma=1\mathrm{e}{-1}}$}
	\end{subfigure}
    \begin{subfigure}[t]{0.226\textwidth}\centering
    	\includegraphics[height=3.7cm]{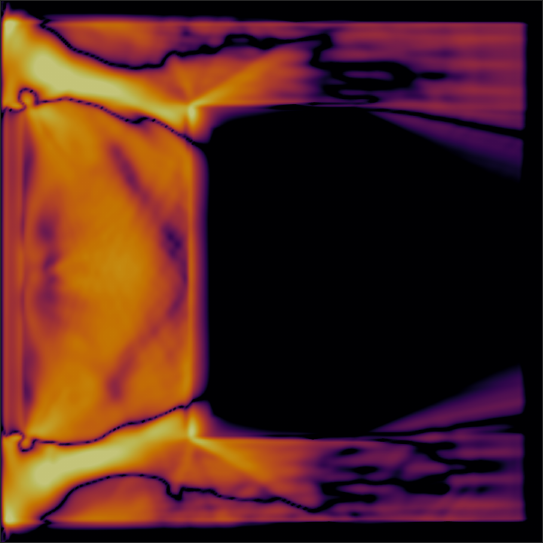}
    	\caption{${\gamma=0}$}
	\end{subfigure}
 	\begin{subfigure}[t]{0.226\textwidth}\centering
    	\includegraphics[height=3.7cm]{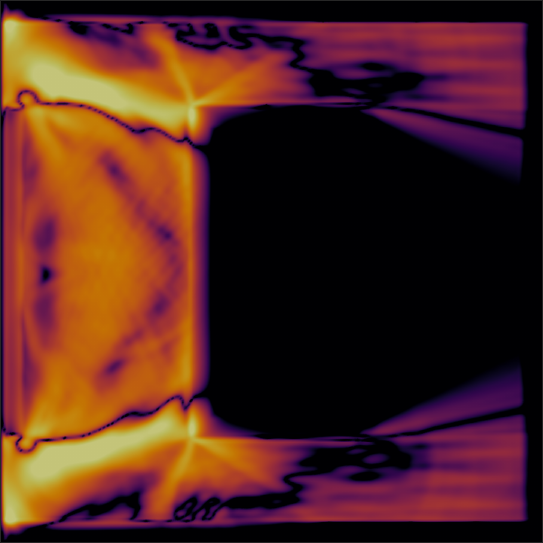}
    	\caption{${\gamma=1\mathrm{e}{-3}}$}
	\end{subfigure}
    \begin{subfigure}[t]{0.226\textwidth}\centering
    	\includegraphics[height=3.7cm]{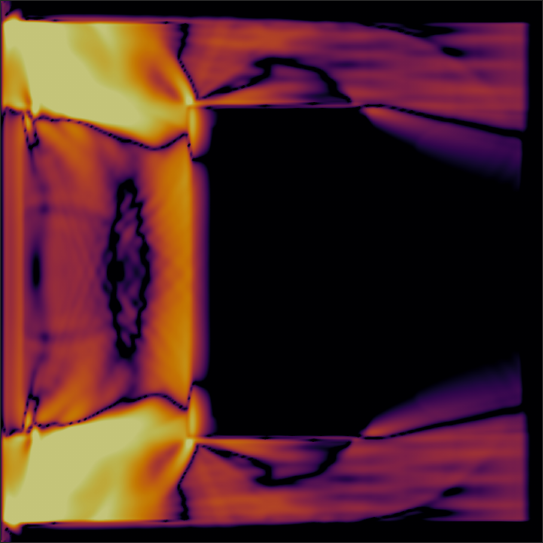}
    	\caption{${\gamma=1\mathrm{e}{-2}}$}
	\end{subfigure}
    \begin{subfigure}[t]{0.266\textwidth}\centering
    	\includegraphics[height=3.7cm]{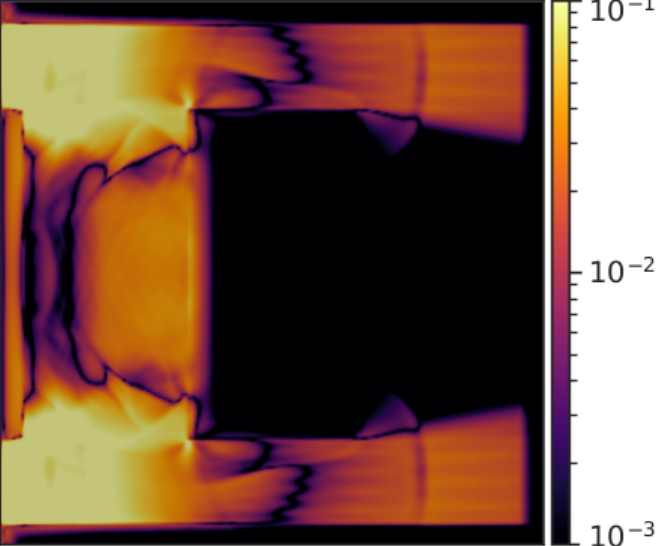}
    	\caption{${\gamma=1\mathrm{e}{-1}}$}
	\end{subfigure}
	\caption{ICNN-based M$_2$ closures with different regularization (top row) and their absolute difference to the S$_{60}$ solution (bottom row). The $\gamma=1\mathrm{e}{-3}$ model has the smallest error followed by the $\gamma=0$ model, where regularization and neural network approximation errors have the best trade-off. The heavily regularized model $\gamma=1\mathrm{e}{-1}$ experiences larger errors due to regularization in the absorption region on the right side of the domain increasing the model deviation to the S$_{60}$ solution.}
	\label{fig_hohlraum_reflection_M2}
\end{figure}
\begin{figure}
	\centering
	\begin{subfigure}[t]{0.226\textwidth}\centering
    	\includegraphics[height=3.7cm]{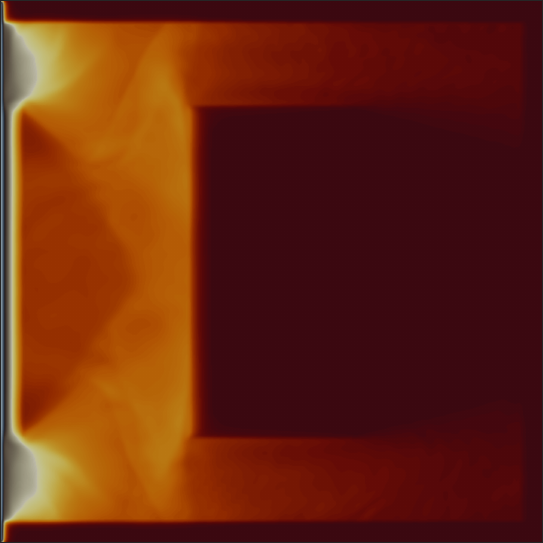}
    	\caption{${\gamma=0}$}
	\end{subfigure}  
	\begin{subfigure}[t]{0.226\textwidth}\centering
    	\includegraphics[height=3.7cm]{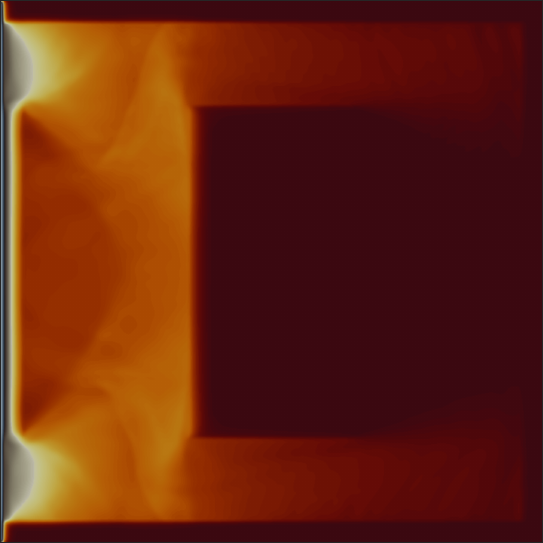}
    	\caption{${\gamma=1\mathrm{e}{-3}}$}
	\end{subfigure}   
	\begin{subfigure}[t]{0.226\textwidth}\centering
    	\includegraphics[height=3.7cm]{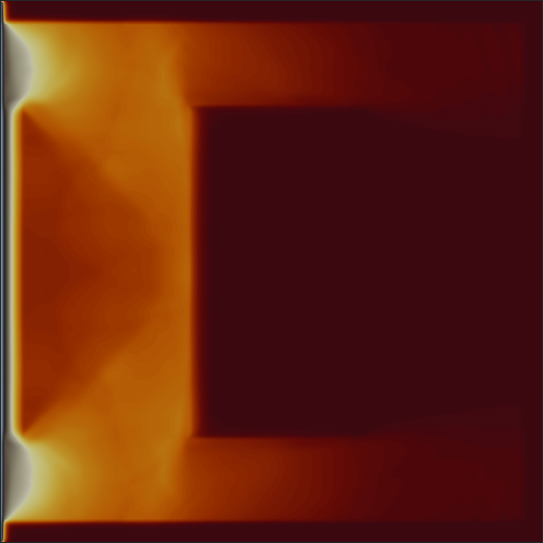}
    	\caption{${\gamma=1\mathrm{e}{-2}}$}
	\end{subfigure}  
 	\begin{subfigure}[t]{0.266\textwidth}\centering
    	\includegraphics[height=3.7cm]{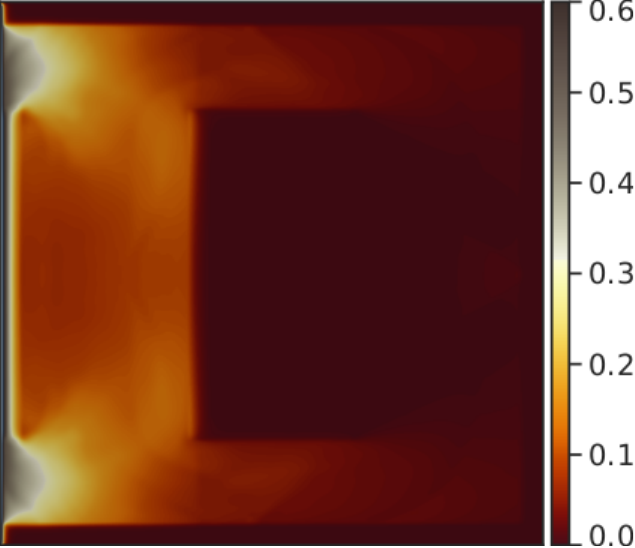}
    	\caption{${\gamma=1\mathrm{e}{-1}}$}
	\end{subfigure}   
	\begin{subfigure}[t]{0.226\textwidth}\centering
    	\includegraphics[height=3.7cm]{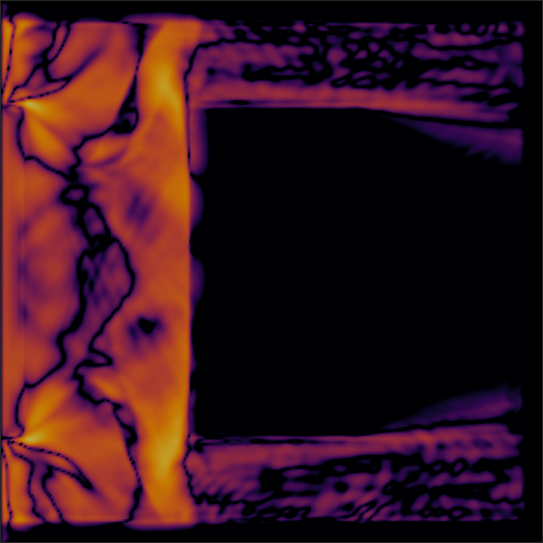}
    	\caption{${\gamma=0}$}
	\end{subfigure}
 	\begin{subfigure}[t]{0.226\textwidth}\centering
    	\includegraphics[height=3.7cm]{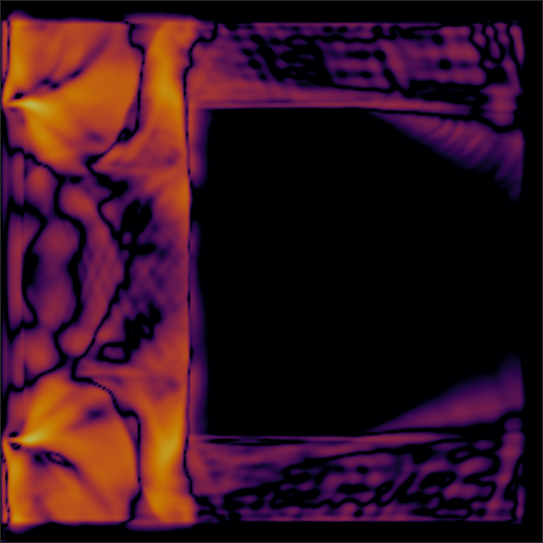}
    	\caption{${\gamma=1\mathrm{e}{-3}}$}
	\end{subfigure}  
 	\begin{subfigure}[t]{0.226\textwidth}\centering
    	\includegraphics[height=3.7cm]{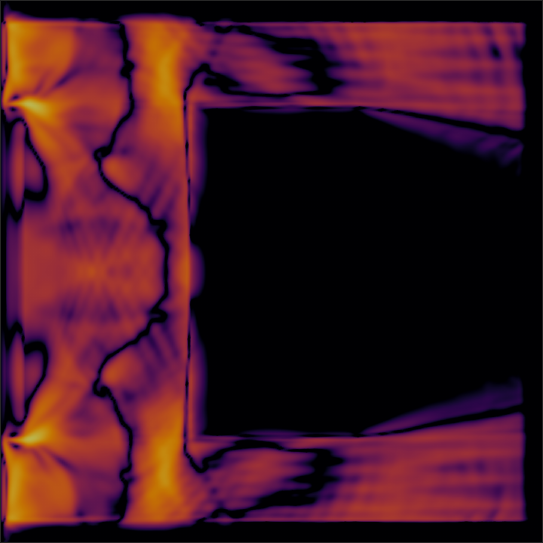}
    	\caption{${\gamma=1\mathrm{e}{-2}}$}
	\end{subfigure}   
 	\begin{subfigure}[t]{0.266\textwidth}\centering
    	\includegraphics[height=3.7cm]{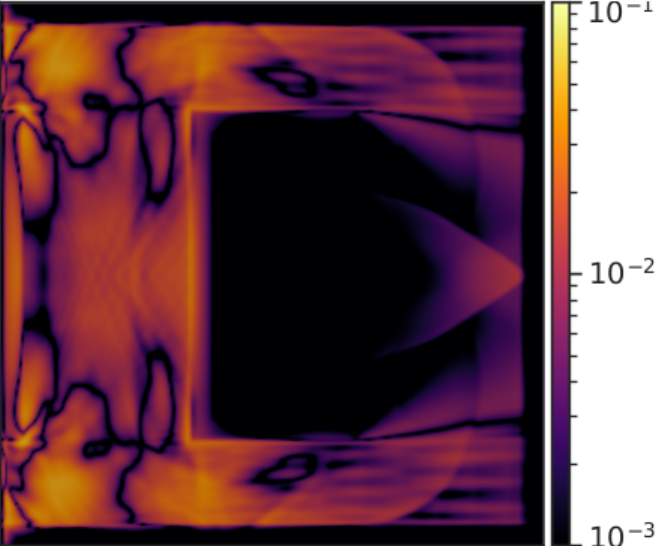}
    	\caption{${\gamma=1\mathrm{e}{-1}}$}
	\end{subfigure}    

	\caption{ICNN-based M$_3$ closures with different regularization (top row) and their absolute difference to the S$_{60}$ solution (bottom row). The $\gamma=1\mathrm{e}{-3}$ model has the smallest error followed by the $\gamma=1\mathrm{e}{-2}$ model, where regularization and neural network approximation errors have the best trade-off. The heavily regularized model $\gamma=1\mathrm{e}{-1}$ experiences larger errors due to regularization in the absorption region on the right side of the domain.}
	\label{fig_hohlraum_N3}
\end{figure}
\begin{figure}
	\centering
	\begin{subfigure}[t]{0.226\textwidth}\centering
    	\includegraphics[height=3.7cm]{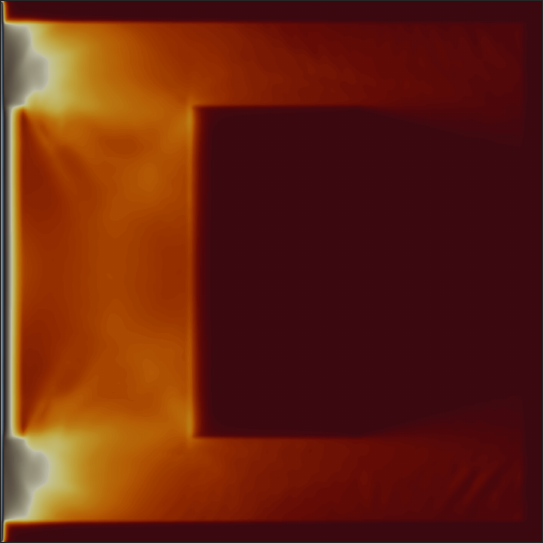}
    	\caption{${\gamma=0}$}
	\end{subfigure}  
	\begin{subfigure}[t]{0.226\textwidth}\centering
    	\includegraphics[height=3.7cm]{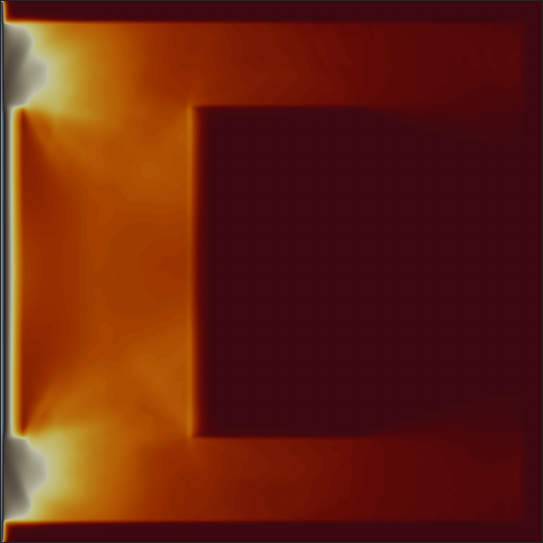}
    	\caption{${\gamma=1\mathrm{e}{-3}}$}
	\end{subfigure}   
	\begin{subfigure}[t]{0.226\textwidth}\centering
    	\includegraphics[height=3.7cm]{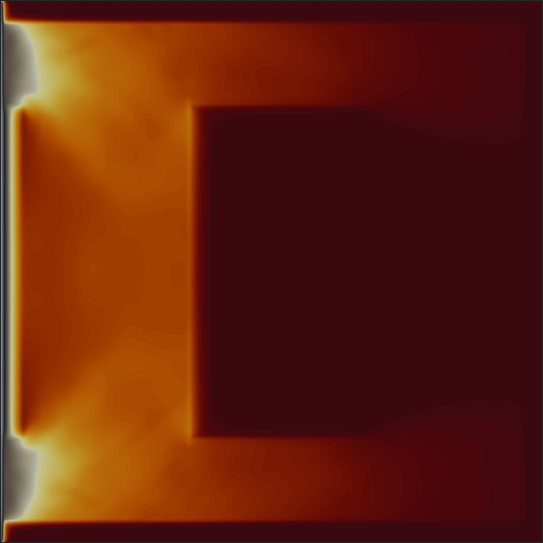}
    	\caption{${\gamma=1\mathrm{e}{-2}}$}
	\end{subfigure}  
 	\begin{subfigure}[t]{0.266\textwidth}\centering
    	\includegraphics[height=3.7cm]{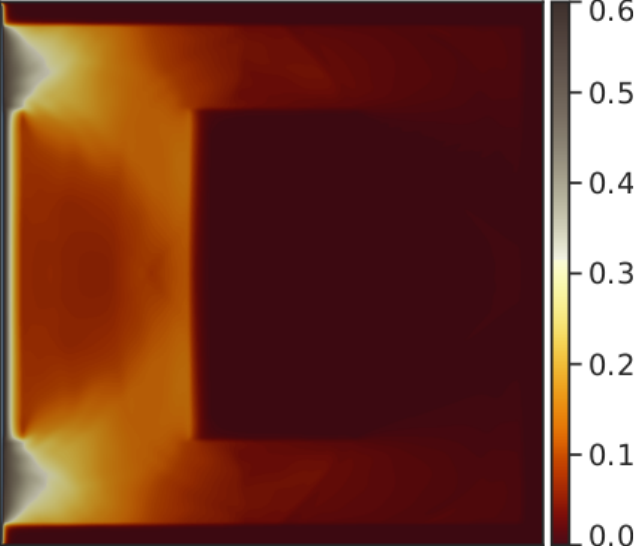}
    	\caption{${\gamma=1\mathrm{e}{-1}}$}
	\end{subfigure}   
	\begin{subfigure}[t]{0.226\textwidth}\centering
    	\includegraphics[height=3.7cm]{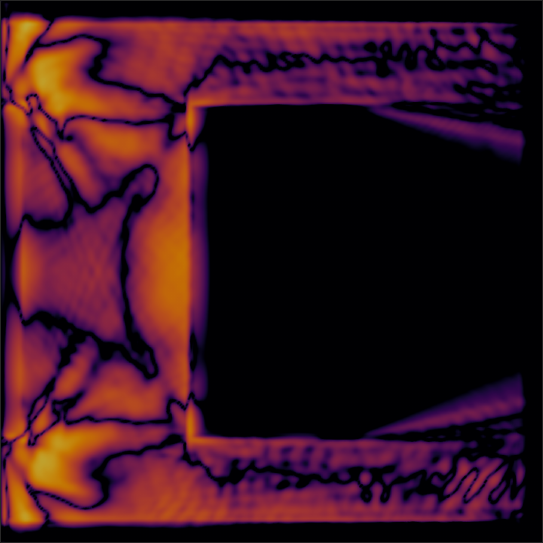}
    	\caption{${\gamma=0}$}
	\end{subfigure}
 	\begin{subfigure}[t]{0.226\textwidth}\centering
    	\includegraphics[height=3.7cm]{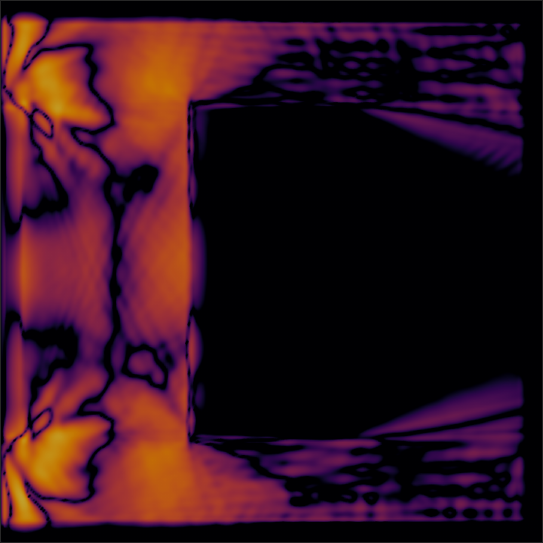}
    	\caption{${\gamma=1\mathrm{e}{-3}}$}
	\end{subfigure}  
 	\begin{subfigure}[t]{0.226\textwidth}\centering
    	\includegraphics[height=3.7cm]{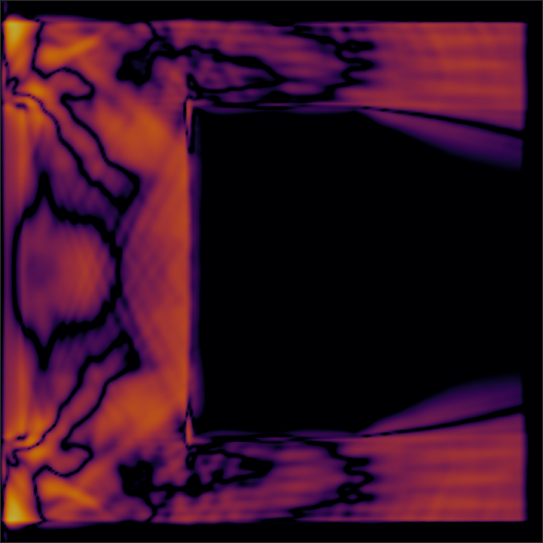}
    	\caption{${\gamma=1\mathrm{e}{-2}}$}
	\end{subfigure}   
 	\begin{subfigure}[t]{0.266\textwidth}\centering
    	\includegraphics[height=3.7cm]{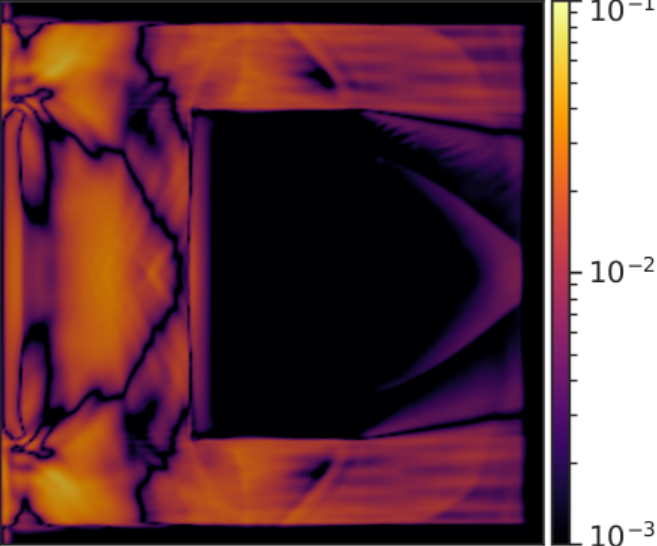}
    	\caption{${\gamma=1\mathrm{e}{-1}}$}
	\end{subfigure}    
	\caption{ICNN-based M$_4$ closures with different regularization. The absolute difference to the S$_{60}$ solution between ICNN-based M$_4$ closure is displayed in the bottom row. The $\gamma=1\mathrm{e}{-2}$ model has the smallest error followed by the $\gamma=1\mathrm{e}{-3}$ model, where regularization and neural network approximation errors have the best trade-off. The heavily regularized model $\gamma=1\mathrm{e}{-1}$ experiences regularization errors in the absorption region on the right side of the domain and the non-regularized model experiences artifacts in the inflow region on the left.}
	\label{fig_hohlraum_N4}
\end{figure}

Next, we compare the simulated order zero moment $u_0$ for various closure orders and regularization levels to the S$_{60}$ reference solution $u_0^{\text{ref}}$. We consider the pointwise absolute difference
\begin{align}
	e(t_f,\mathbf{x})= \abs{u_0^{\text{ref}}(t_f,\mathbf x)-u_0(t_f,\mathbf x)}
\end{align}
at final time $t_f$ of the given numerical method.
Figure~\ref{fig_hohlraum_reflection_M2},~\ref{fig_hohlraum_N3} and~\ref{fig_hohlraum_N4} show the corresponding evaluations of $u_0$ of the ICNN-based  M$_2$, M$_3$ and M$_4$ closures in the first row and additionally $e(t_f,\mathbf{x})$ in the second row.
We see that with increased closure order, the errors in the streaming regions near the left side inflows are significantly reduced and the solution quality on the right half of the simulation increases by almost an order of magnitude.
Considering Fig.~\ref{fig_hohlraum_N3} and~\ref{fig_hohlraum_N4}, we further see that the difference to the S$_{60}$ solution increases for $\gamma=0.1$ in the M$_3$ and M$_4$ simulations compared to smaller $\gamma$, which is an effect of the regularization error of the entropy closure, see Theorem~\ref{theo_reg_ansatz_error}. 
%In~\cite[Theorem 1]{AlldredgeFrankHauck} the choice $\gamma\in\landauO(\Delta x)$, i.e. in the same order of magnitude as the spatial resolution of the simulation is recommended.
Consequently, a trade-off between training performance and regularization error has to be made when configuring a simulation: Higher order entropy closures yield lower differences to the reference solutions but are harder to train. Higher regularization increases training performance but introduces the regularization error if $\gamma$ is significantly bigger than the grid resolution. Table~\ref{tab_ICNN_hohlraum} shows the best regularization level for each closure order and we see, that $\gamma=1\mathrm{e}{-3}$ yields the best results for $\Delta x=7.5\mathrm{e}{-3}$.
The displayed errors are computed as
\begin{align}\label{eq_rel_integrated_error}
	e_{\rm{rel},u_0} = \frac{\int_{\mathbf X} \abs{u_0^{\text{ref}}(t_f,\mathbf x)-u_0(t_f,\mathbf x)} \intD \mathbf x}{\int_{\mathbf X} \abs{u_0^{\text{ref}}(t_f,\mathbf x))} \intD \mathbf x}.
\end{align}
%where $u_0^{\text{ref}}$ is the order zero moment of the S$_{60}$ solution and $u_0$ is the neural network-based, regularized M$_N$ solution with dynamic reconstruction ansatz.
Note that the neural network validation results in Table~\ref{tab_closures_reg} further support this regularization range.

\begin{table}[t!]
\centering
\caption{Relative integrated errors $e_{\rm{rel},u_0}$ of ICNN-based entropy closures corresponding to the S$_{60}$ hohlraum simulation. Particularly higher-order closures benefit from regularized surrogate models. {A large regularization, i.e. $\gamma=1\mathrm{e}{-1}$, dominates the neural network and method errors. The best trade-off between the numerical errors is $\gamma=1\mathrm{e}{-3}$}.} \label{tab_ICNN_hohlraum}
\begin{tabular}{@{\extracolsep{2pt}}lccc}
\toprule   
    $\gamma$ &  M$_2$   & M$_3$ & M$_4$   \\             	\midrule %&M$_2$, rotated
	  	$0$ &${7.76\mathrm{e}{-2}}$ 	                  	& $3.60\mathrm{e}{-2} $      	& $4.07\mathrm{e}{-2} $ \\ %    & $7.31\mathrm{e}{-2} $    
     	$1\mathrm{e}{-3}$ &$\mathbf{7.1\mathrm{e}{-2}}$ 	& $\mathbf{3.34\mathrm{e}{-2}} $ & $\mathbf{2.01\mathrm{e}{-2}} $\\ %& $\mathbf{7.05\mathrm{e}{-2}} $
     	$1\mathrm{e}{-2}$ &$1.06\mathrm{e}{-1}$ 	             	& $3.86\mathrm{e}{-2} $      	& ${6.33\mathrm{e}{-2}} $\\ %& $7.67\mathrm{e}{-2} $ 
     	$1\mathrm{e}{-1}$ &$1.88\mathrm{e}{-1}$ 	          	& $1.66\mathrm{e}{-1} $      	& $1.57\mathrm{e}{-1} $ \\ %& $1.89\mathrm{e}{-1} $    
\bottomrule
\end{tabular}
\end{table}

\subsubsection{Performance comparison to the P$_N$ and S$_N$ methods}

\begin{figure}[t!]
	\centering
	\begin{subfigure}[t]{0.226\textwidth}\centering
    	\includegraphics[height=3.7cm]{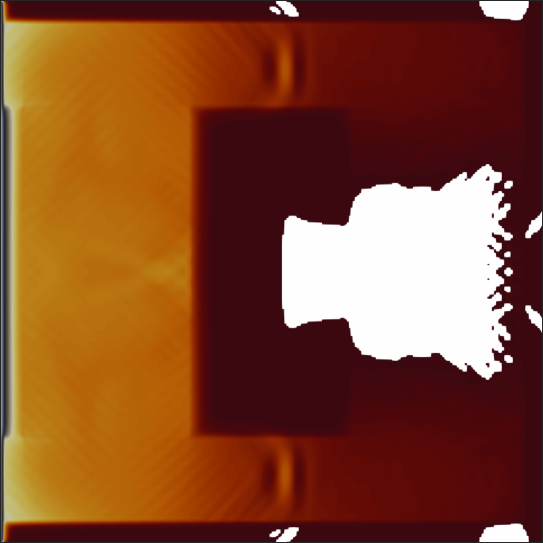}
    	\caption{P$_3$}
	\end{subfigure}  
	\begin{subfigure}[t]{0.226\textwidth}\centering
    	\includegraphics[height=3.7cm]{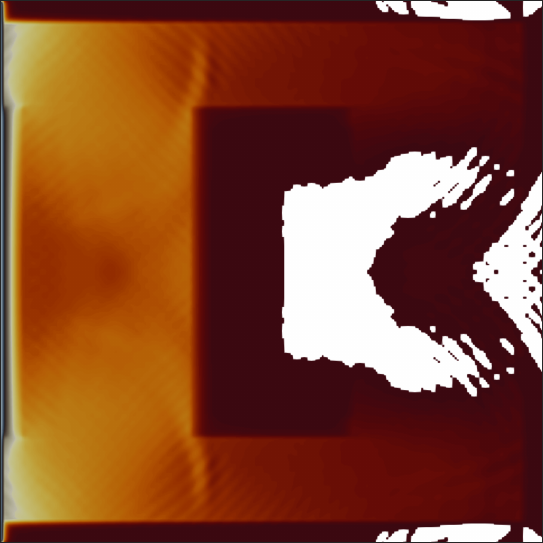}
    	\caption{P$_5$}
	\end{subfigure}   
	\begin{subfigure}[t]{0.226\textwidth}\centering
    	\includegraphics[height=3.7cm]{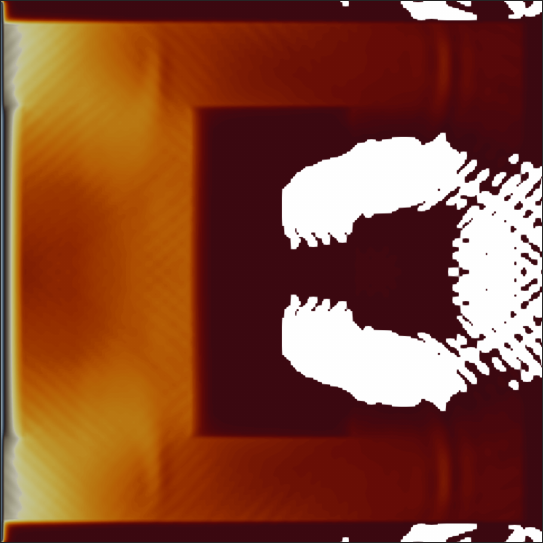}
    	\caption{P$_7$}
	\end{subfigure}  
 	\begin{subfigure}[t]{0.266\textwidth}\centering
    	\includegraphics[height=3.7cm]{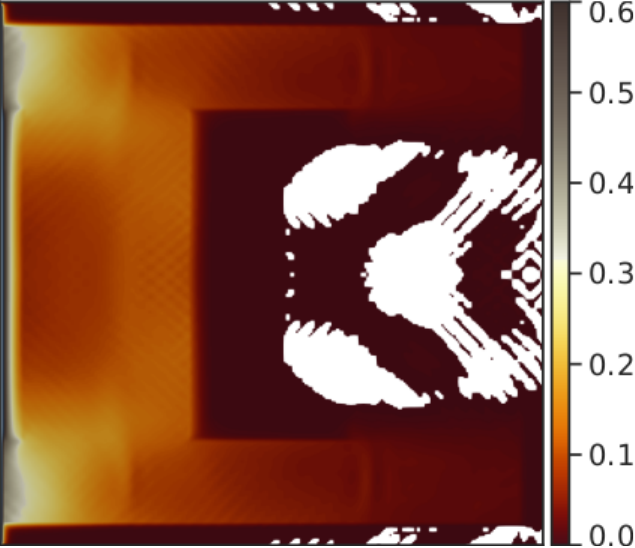}
    	\caption{P$_9$}
	\end{subfigure}   
 
	\begin{subfigure}[t]{0.226\textwidth}\centering
    	\includegraphics[height=3.7cm]{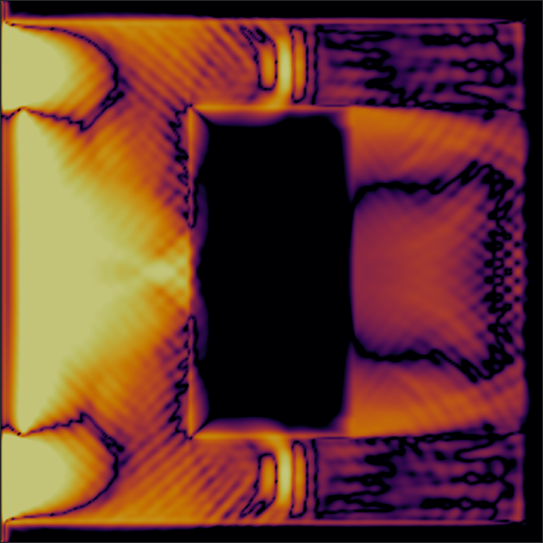}
    	\caption{P$_3$}
	\end{subfigure}
 	\begin{subfigure}[t]{0.226\textwidth}\centering
    	\includegraphics[height=3.7cm]{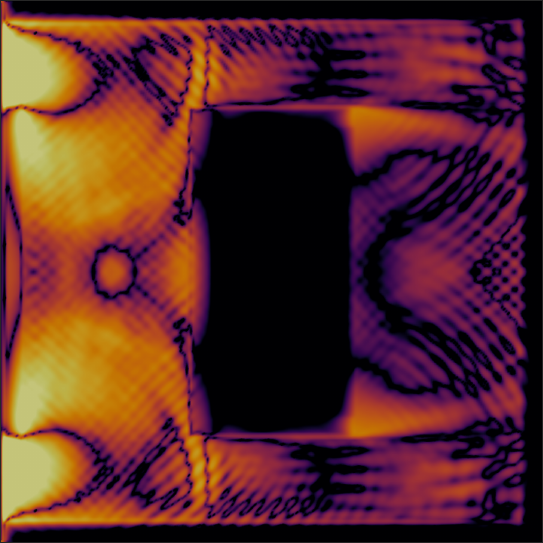}
    	\caption{P$_5$}
	\end{subfigure}  
 	\begin{subfigure}[t]{0.226\textwidth}\centering
    	\includegraphics[height=3.7cm]{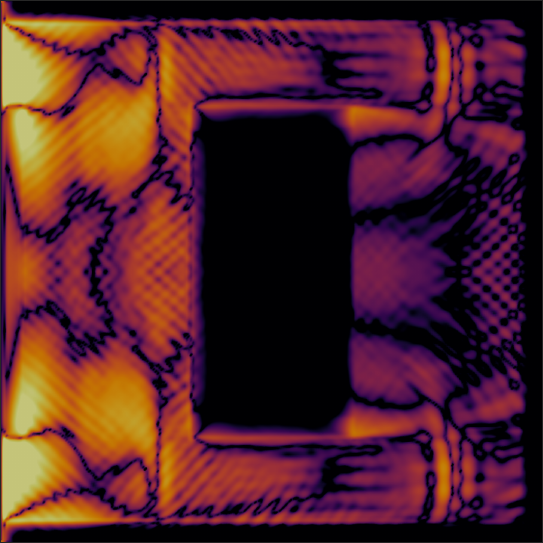}
    	\caption{P$_7$}
	\end{subfigure}   
 	\begin{subfigure}[t]{0.266\textwidth}\centering
    	\includegraphics[height=3.7cm]{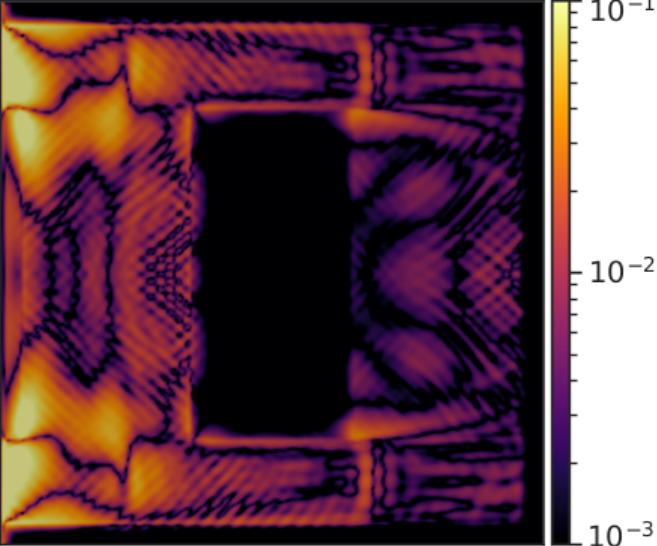}
    	\caption{P$_9$}
	\end{subfigure}    

	\caption{P$_N$ closures (top row) and their absolute difference to the S$_{60}$ solution (bottom row). The white areas in the solution plots denote unphysical negative solution values. The P$_N$ closure displays higher numerical errors compared to S$_N$ and M$_N$ simulations with similar degrees of freedom.}
	\label{fig_hohlraum_PN}
\end{figure}
\begin{figure}
	\centering
	\begin{subfigure}[t]{0.226\textwidth}\centering
    	\includegraphics[height=3.7cm]{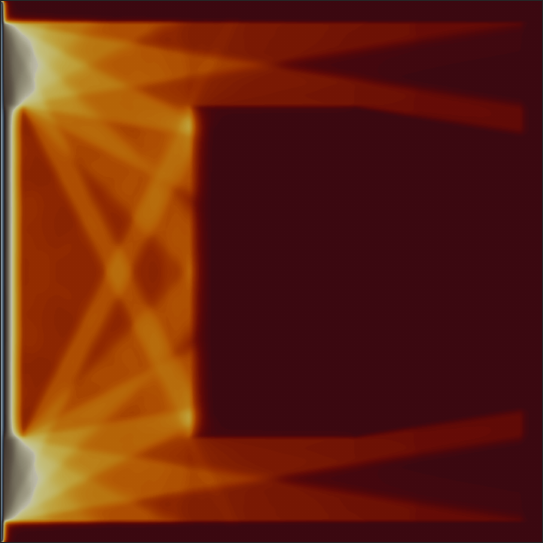}
    	\caption{S$_{11}$}
     \label{fig_hohlraum_S10}
	\end{subfigure}  
	\begin{subfigure}[t]{0.226\textwidth}\centering
    	\includegraphics[height=3.7cm]{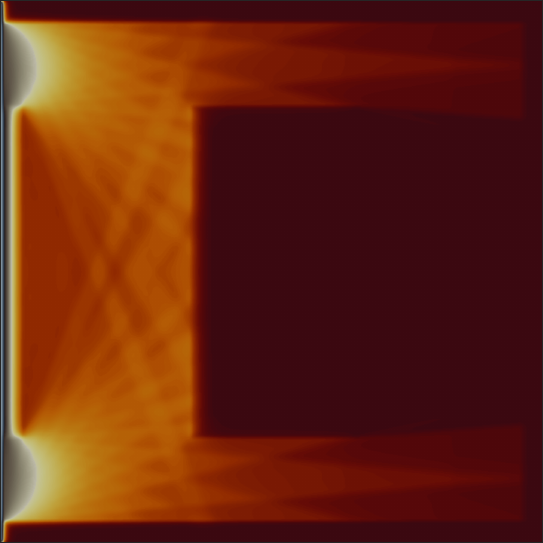}
    	\caption{S$_{21}$}
	\end{subfigure}   
	\begin{subfigure}[t]{0.226\textwidth}\centering
    	\includegraphics[height=3.7cm]{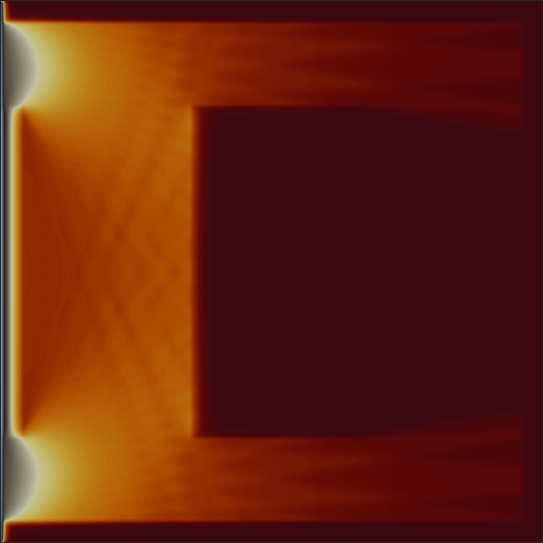}
    	\caption{S$_{31}$}
	\end{subfigure}  
 	\begin{subfigure}[t]{0.266\textwidth}\centering
    	\includegraphics[height=3.7cm]{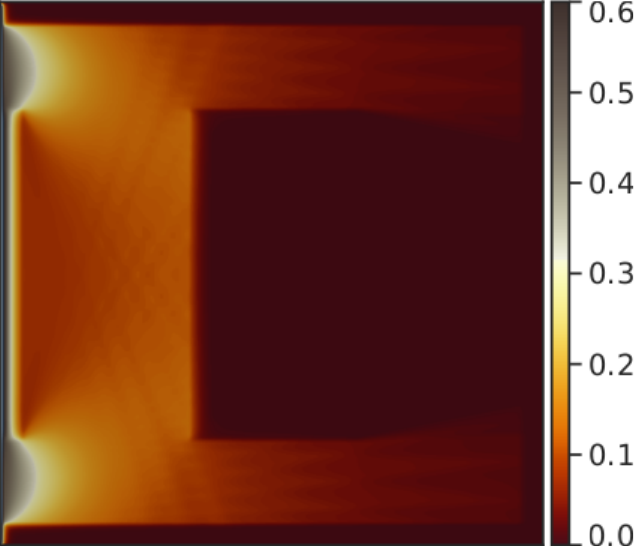}
    	\caption{S$_{41}$}
	\end{subfigure}   
	\begin{subfigure}[t]{0.226\textwidth}\centering
    	\includegraphics[height=3.7cm]{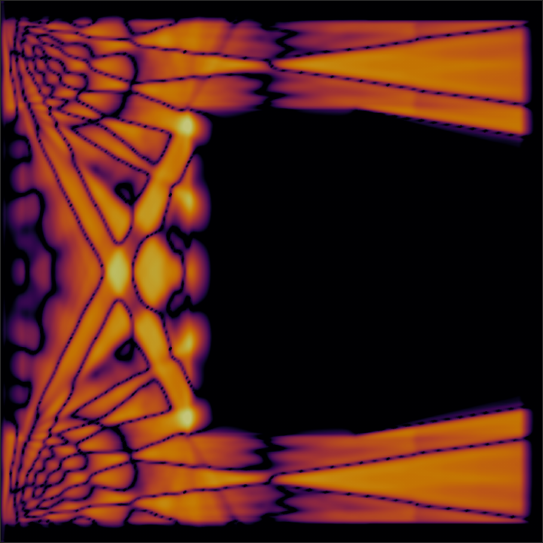}
    	\caption{S$_{11}$}
	\end{subfigure}
 	\begin{subfigure}[t]{0.226\textwidth}\centering
    	\includegraphics[height=3.7cm]{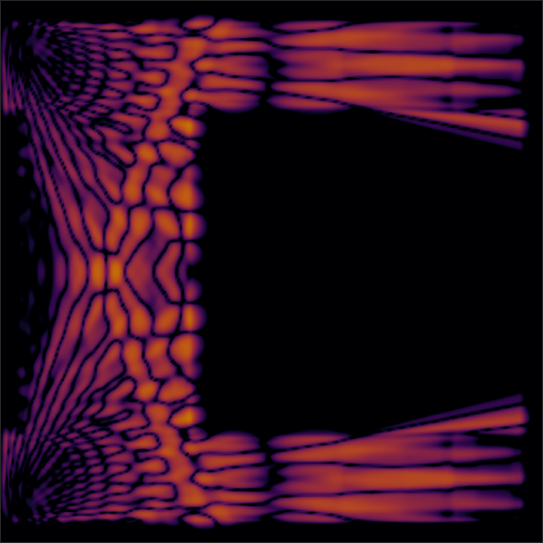}
    	\caption{S$_{21}$}
	\end{subfigure}  
 	\begin{subfigure}[t]{0.226\textwidth}\centering
    	\includegraphics[height=3.7cm]{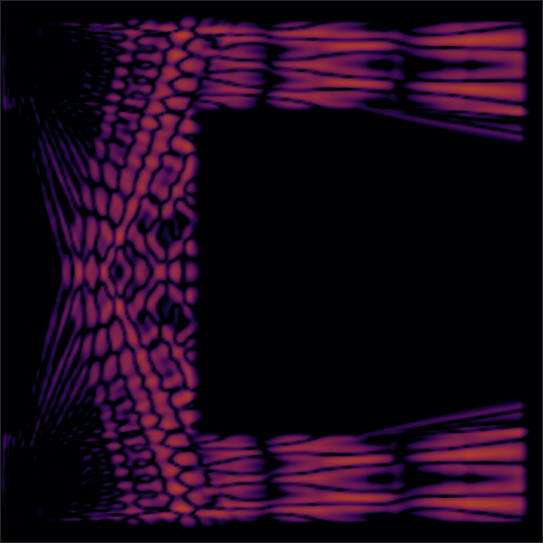}
    	\caption{S$_{31}$}
	\end{subfigure}   
 	\begin{subfigure}[t]{0.266\textwidth}\centering
    	\includegraphics[height=3.7cm]{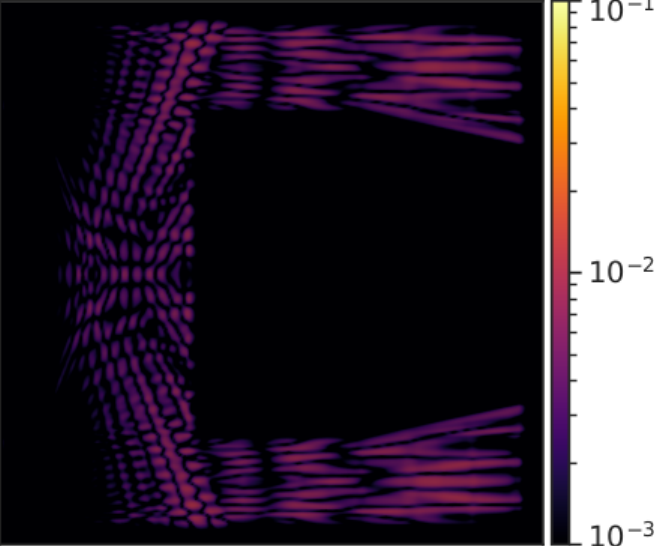}
    	\caption{S$_{41}$}
	\end{subfigure}   

	\caption{S$_N$ simulations (top row) and their absolute differences to the S$_{60}$ solution (bottom row). The numerical artifacts are ray effects, which can be mitigated by very high order S$_N$ schemes or diffusive rotation methods~\cite{hauck2019filtered,frank2020ray}.}
	\label{fig_hohlraum_SN}
\end{figure}
Common other methods for radiative transport simulations are the P$_N$ and S$_N$ methods. The former is a moment method with a different closure. Instead of using the entropy-based closure, a simple truncation closure is used, which yields a linear reconstruction of the kinetic density $f_\fu$ from the moment basis,
\begin{align}
	f_\fu({\fv}) = \fu\cdot{\fm}({\fv}).
\end{align}
The P$_N$ closure can also be viewed as the special case of a quadratic entropy density $\eta(g)=g^2$. Although computationally highly efficient, the pitfalls of the P$_N$ method are oscillations and negative solutions, which can be seen in Fig.~\ref{fig_hohlraum_PN}. Furthermore, higher moment orders are required to ensure the same order of accuracy as M$_N$ closures. Figure~\ref{fig_hohlraum_PN} shows that especially in the shielded regions on the right-hand side of the computational domain, the P$_N$ method experiences higher numerical errors than the M$_N$ simulation.

The S$_N$ method discretizes the velocity space directly using a tensorized Gauss-Legendre quadrature rule of order $N$. The resulting transport system is of size equal to the number of quadrature points and the system's equations are coupled only by the collision operator. Consequently, the S$_N$ method has the lowest computational cost in comparison to P$_N$ and M$_N$ systems of similar size. However, the required quadrature order for a high-quality simulation is much higher than the moment order of P$_N$ and M$_N$ methods since low-order S$_N$ methods typically exhibit ray-effect artifacts. The number of quadrature points scales quadratically with the quadrature order, which results in a large system of equations for high-order S$_N$ methods. On modern high-performance clusters, the performance bottleneck is typically the memory footprint of the simulation. An overview of S$_N$ simulations of the hohlraum test case for different $N$ is given in Fig.~\ref{fig_hohlraum_SN}.

Thus a comparison between P$_N$, S$_N$, M$_N$, and neural network-based M$_N$ methods needs to consider computational time and memory footprint besides the difference to the reference solution. To this end, we measure the wall-time of a hohlraum simulation with the setup given by Table~\ref{tab_linesource_setup}  for all previously discussed non-regularized, regularized, and neural network-based M$_N$, as well as the P$_N$ and S$_N$ methods. Each simulation is computed on the same, isolated hardware, using a $16$-core CPU with $64$ GB Memory, such that each test case fits entirely into the system's memory. Figure~\ref{fig_hohlraum_performance} compares the computational performance of all methods with different moment, respectively quadrature orders. 
\begin{figure}[tb!]
	\centering
 	\begin{subfigure}[t]{0.32\textwidth}\centering
    	\includegraphics[width=\textwidth]{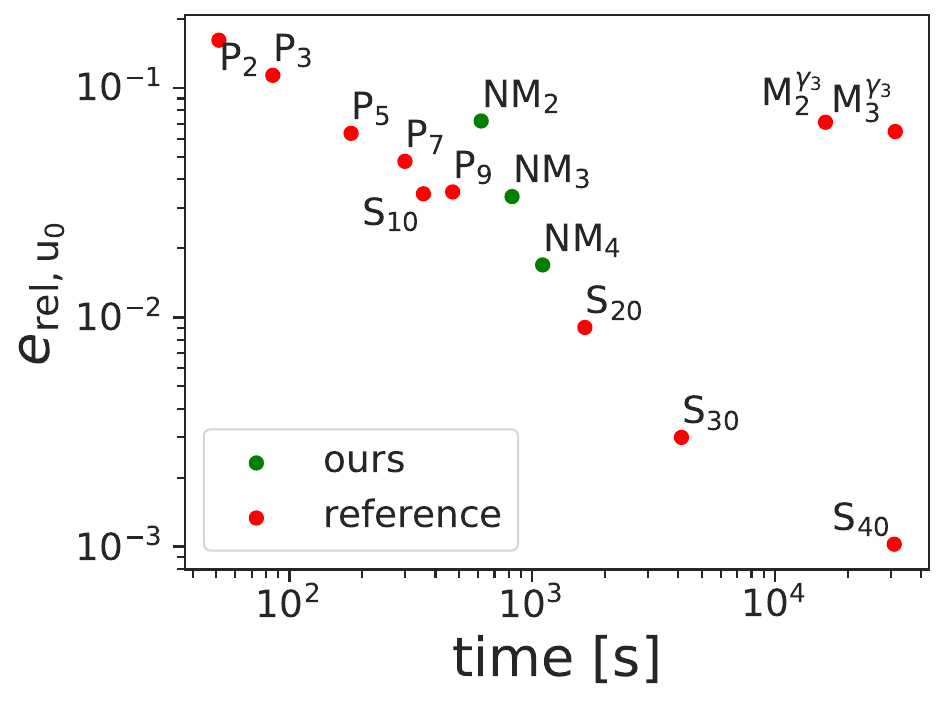}
    	\caption{$e_{\rm{rel},u_0}$ vs simulation time }	\label{fig_hohlraum_performancea}

	\end{subfigure}
 	\begin{subfigure}[t]{0.32\textwidth}\centering
    	\includegraphics[width=\textwidth]{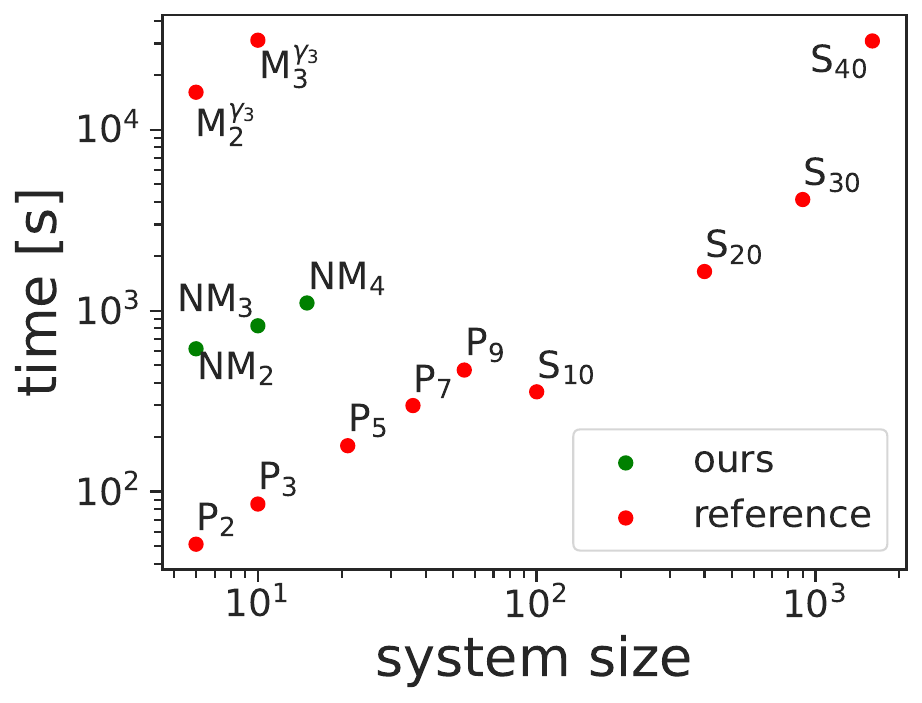}
    	\caption{Simulation time vs system size }	\label{fig_hohlraum_performanceb}

	\end{subfigure}
	\begin{subfigure}[t]{0.32\textwidth}\centering
    	\includegraphics[width=\textwidth]{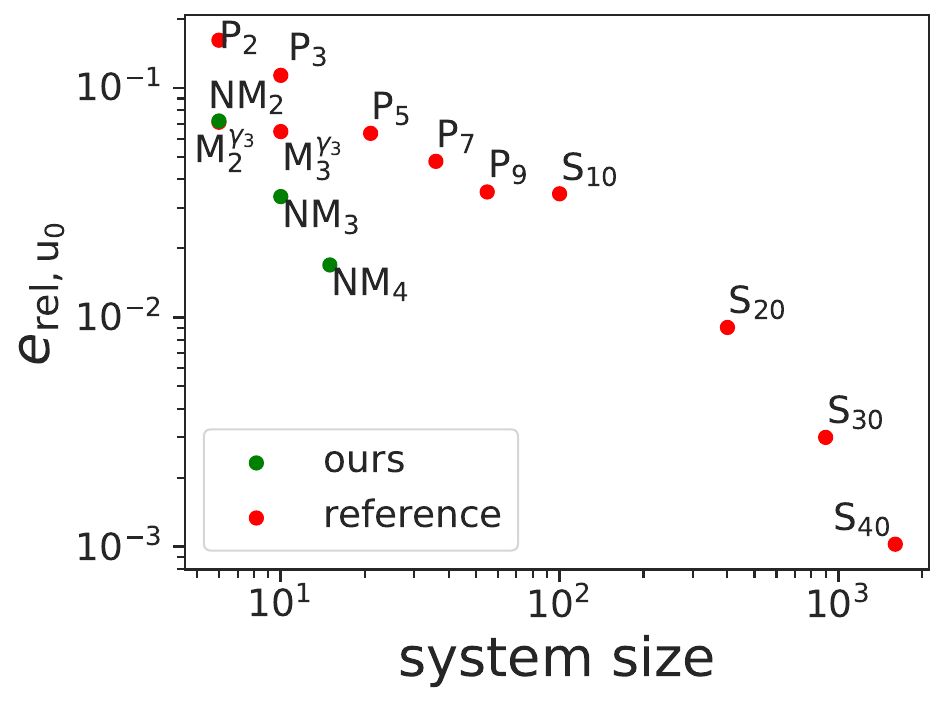}
    	\caption{$e_{\rm{rel},u_0}$ vs system size}	\label{fig_hohlraum_performancec}

	\end{subfigure}
   
    	\caption{Comparison of memory footprint, simulation wall-time, and relative simulation error of P$_N$, S$_N$, M$_N$ and ICNN-based M$_N$ (denoted by NM$_N$) solutions. Less is better. The neural network-based M$_N$ method has a particularly small memory footprint, paired with a competitive wall-time efficiency relative to the numerical error.}
	\label{fig_hohlraum_performance}
\end{figure}
Note that the computation time and memory footprint of neural network-based M$_N$ methods only depends on the size of the neural network, thus we only report the performance of one neural network-based simulation per moment order, i.e. the best neural network-based run of the reported regularization levels. The displayed errors are computed with Eq.~\eqref{eq_rel_integrated_error}. We denote the neural network-based methods by NM$_N$ in the illustration.
We see in Fig.~\ref{fig_hohlraum_performancea} that the neural network-based entropy closure accelerates the M$_N$ method to be computationally competitive, compared to the P$_N$ and S$_N$ methods.
Note, that simulation time is a function of the code. The KiT-RT framework is constructed such that spatial and temporal discretizations use the same implementation across all macroscopic methods. However, remark that an M$_N$ implementation leaves plenty of opportunities~\cite{KRISTOPHERGARRETT2015573, GarretHauck} for advanced code optimization that may improve the methods timings.

Figure~\ref{fig_hohlraum_performanceb} compares the size of the transport system and computational time of the different methods showing that even for higher order closures, neural network-based M$_N$ methods have the same simulation time as S$_{20}$ simulations, whereas the memory footprint of the neural network-based M$_N$ method is smaller by almost two orders of magnitude. Neural network-based entropy closure accelerates the Newton-based M$_N$ method by more than an order of magnitude in terms of computational time, while keeping the memory footprint the same.
Figure~\ref{fig_hohlraum_performancec} shows the simulation error over the system size of the different methods. Here, the neural network-based M$_3$ and M$_4$ methods have the best trade-off between memory footprint and simulation accuracy.

\section{Summary and conclusion}
We have presented a framework for regularized, structure-preserving neural network surrogate models for the entropy-based closure of the moment system of linear kinetic equations.
Regularization addresses the challenge of training and inference of neural network-based surrogates near the boundary of the realizable set, and thus enables the creation of robust surrogates for high-order closures in two spatial dimensions. We have provided an error analysis for regularized network approximations and put it into the context of numerical errors of commonly used  schemes for the kinetic equation.
The presented methods are tested on  synthetic and simulation test cases, with applications to radiation transport. The neural network-based entropy closure leads to a computationally competitive simulation method with an advantageous trade-off between memory footprint and numerical error, compared to the P$_N$, the traditional M$_N$, and the S$_N$ method.

\section*{Acknowledgements}
The work of Steffen Schotthöfer and Martin Frank has been funded by the Priority Programme ``Theoretical Foundations of Deep Learning (SPP2298)''  by the Deutsche Forschungsgemeinschaft. 
The work of Steffen Schotthöfer, Cory Hauck and Paul Laiu is sponsored by the Applied Mathematics Progrm at the Office of Advanced Scientific Computing Research, U.S. Department of Energy, and performed at the Oak Ridge National Laboratory, which is managed by UT-Battelle, LLC under Contract No. DE-AC05-00OR22725 with the U.S. Department of Energy. The United States Government retains and the publisher, by accepting the article for publication, acknowledges that the United States Government retains a non-exclusive, paid-up, irrevocable, world-wide license to publish or reproduce the published form of this manuscript, or allow others to do so, for United States Government purposes. The Department of Energy will provide public access to these results of federally sponsored research in accordance with the DOE Public Access Plan (http://energy.gov/downloads/doe-public-access-plan).

\bibliographystyle{abbrv}
\bibliography{main} 

\newpage
\appendix
\section{Additional material to Section~\ref{sec_kt}}\label{sec_appendix_kt}

\subsection{Proof of Theorem~\ref{theo_diverging_entropy}}

As a remineder we redefine weak convergence and a Dirac sequence as a prerequisite for the proof of Theorem~\ref{theo_diverging_entropy}.
Let  $\mu_n$, $n\in\mathbb{N}$ and  $\mu$ be measures on the interval $[-1,1]$.
The sequence of measures $\mu_n$ converges weakly to $\mu$, if $\int_{[-1,1]} \phi \intD\mu_n \xrightarrow[]{n\rightarrow\infty}\int_{[-1,1]} \phi \intD\mu\,\forall \phi\in C_b$ where $C_b$ denotes the set of bounded and continuous functions.

For the purpose of proving Theorem~\ref{theo_diverging_entropy}, we consider the sequence of (kinetic) densities  $f_n:[-1,1,]\mapsto \mathbb{R}_+$ under the Lebesque measure on $[-1,1]$, i.e. the velocity space in slab geometry, as the sequence of measures $\mu_n$. The target measure $\mu$ is here a Dirac distribution at $v^*\in [-1,1]$, which we write as $\delta_{{\fv}^*}\intD \fv$. 

Further, a sequence $f_n$ is called Dirac sequence, if 
\begin{enumerate}
    \item $\int_{[-1,1]} \phi f_n\intD\fv \xrightarrow[]{n\rightarrow\infty}\int_{[-1,1]} \phi \delta_{{\fv}^*}\intD\fv\quad\forall \phi\in C_b$
    \item  $\int_{[-1,1]} \phi f_n\intD\fv =1\quad\forall n\in\mathbb{N}$
\end{enumerate}

\begin{proof}[Proof of Theorem~\ref{theo_diverging_entropy}]
Consider the fact that $f_{\ofu}$ for $\overline{\fu}\in\partial\overline{\mathcal{R}}$ consists of a positive, linear combination of Dirac distributions for $D=[0,\infty)$ and $[-1,1]$  as the velocity space\cite{Monreal_210538,Curto_recursiveness}.

First, consider weak convergence to a single Dirac distribution $\delta_{{\fv}^*}$ at an arbitrary point $\mathbf  v^*\in [-1,1]$ .
We show that the Maxwell-Boltzmann entropy of a Dirac sequence $f_n$ converging weakly to $\delta_{{\fv}^*}$ diverges to infinity.
Consider
\begin{align}
	f_n({\fv}) =n\mathds{1}_{B_{n}^{{\fv}^*}}({\fv}), \qquad n\in\mathbb{N}
\end{align}
where $B_{n}^{{\fv}^*}\subset [-1,1]$ is a subdomain of measure $\frac{1}{n}$ and $\mathds{1}({\fv})$ is the indicator function. Then $f_n$ is a Dirac sequence.
%, since
%\begin{align}
%	\inner{f_n({\fv})} = 1, \quad \forall n\in\mathbb{N}
%\end{align}
%and
%\begin{align}
% 	\inner{f_n({\fv})\phi({\fv})} \xrightarrow[]{n\rightarrow\infty}\phi({\fv}={\fv}^*) = \inner{\delta_{{\fv}^*}({\fv})\phi({\fv})} \qquad\forall \phi\in C_b.
%\end{align}
Moreover,
%  For such a Dirac sequence, 
% we investigate the limit
% \begin{align}
% 	\lim_{n\rightarrow\infty}\inner{f_n({\fv})\log{f_n({\fv})}-f_n({\fv})}.
% \end{align}
% Evaluating the Maxwell-Boltzmann entropy for $f_n$ using the transformation theorem yields
\begin{align}
	\int_{[-1,1]}{f_n({\fv})\log{f_n({\fv})}-f_n({\fv})}\intD {\fv} &=\int_{[-1,1]}n\mathds{1}_{B_{n}^{{\fv}^*}}({\fv})\log{n\mathds{1}_{B_{n}^{{\fv}^*}}({\fv})}-n\mathds{1}_{B_{n}^{{\fv}^*}}({\fv})\intD {\fv} \\
	&=\int_{B_{n}^{{\fv}^*}}n\log{n}\intD {\fv} -n\int_{B_{n}^{{\fv}^*}}1\intD {\fv} \label{eq_helper1}\\
	&=\log{n}-1.
\end{align}
This term diverges to infinity as $n\rightarrow\infty$, which concludes the proof.

Now consider the case, where $f_n$ converges to a  positive combination of $p$ Dirac distributions at points $v_i^*$,  $i=1,\dots,l$. Then, the proof works analogously up to Eq.~\eqref{eq_helper1}, where we consider the sum of the integrals of the components of the linear combination. Since $v_i^*$ are finitely many distinct points, there is an $n\in\mathbb{N}$ such that $B_{n}^{{\fv}_i^*}\cup B_{n}^{{\fv}_j^*}=\emptyset$ for all $1\leq i\neq j\leq l$. Then the intregrals over the corresponding sets
 can be considered individually and the remainder of the proof follows in analogy to the case $p=1$.
\end{proof}

\section{Additional material to Section~\ref{sec_reg_moment}}\label{sec_appendix_reg_moment}

\subsection{Proof of Theorem~\ref{thm_grad_h}}
\label{sec_thm_grad_h_pf}

\begin{proof}
From the fact that ${h}^\gamma(\fu) =-\phi^\gamma(\balpha_{\fu}^\gamma;\fu)$ and the definition of $\phi^\gamma$ in \eqref{eq_entropyDualOCP_part_reg_objectF}, the gradient of $h^\gamma$ can be expressed as
\begin{equation}\label{eq_grad_h}
\nabla_{\fu} {h}^\gamma(\fu) 
= \balpha_{\fu}^\gamma 
- \frac{\gamma}{2}\big[\|(\bm\alpha_{\fu}^\gamma)_{\#}\|^2,\mathbf{0}^\Tr\big]^\Tr 
- 
\nabla_{\fu} \balpha_{\fu}^\gamma \left(\nabla_{\balpha_{\fu}^\gamma}{\phi}^\gamma(\balpha_{\fu}^\gamma; \fu )\right).
\end{equation}
Since $\balpha_{\fu}^\gamma$ is defined as the minimizer of $\phi^\gamma$ in \eqref{eq_entropyDualOCP_part_reg}, $\nabla_{\balpha_{\fu}^\gamma}{\phi}^\gamma(\balpha_{\fu}^\gamma; \fu )=0$.  Thus the last term in \eqref{eq_grad_h} vanishes, which proves the claim.
\end{proof}

\subsection{Proof of Lemma~\ref{lem_alpha_0}}
\label{sec_lem_alpha_0_pf}

\begin{proof}
Recall that, by definition, $\overline{u}_0=1$. The first order optimality condition of 
\eqref{eq_entropyDualOCP_part_reg} leads to
\begin{align}
	\nabla_{\bm\alpha} \phi^\gamma\left(\balpha^\gamma_{\ofu};\overline{\fu}\right)=\inner{{\fm}\exp\left( \balpha^\gamma_{\ofu}\cdot {\fm}\right)}-\overline{\fu}+{\gamma}\big[0,\left(\balpha^\gamma_{\ofu}\right)_\#^\Tr\big]^\Tr = 0\:.
\end{align}
The first element of this vector equation implies that
$
  \inner{m_0\exp\left( \balpha^\gamma_{\ofu}\cdot {\fm}\right)}=\overline{u}_0 =1
$.
Since $\balpha_{\ofus}^\gamma= \left[\alpha^\gamma_{\overline{\fu},0},\left(\balpha^\gamma_{\ofu}\right)^\top \right]^\top$, 
\begin{equation}
	1=\inner{m_0\exp\left( \balpha^\gamma_{\ofu}\cdot {\fm}\right)} = \exp\left(m_0 \alpha^\gamma_{\overline{\fu},0}\right)\inner{m_0\exp\left(\left(\balpha^\gamma_{\ofu}\right)_\#\cdot \mathbf{m}_\#\right)},
\end{equation}
which, together with the definition of $\vartheta$ given in Eq.~\eqref{eq_ref_alpha_reduction}, proves the claim.
\end{proof}
\subsection{Proof of Lemma~\ref{lem_reduced_dual_obj}}
\label{sec_lem_reduced_dual_obj_pf}

\begin{proof}
The first claim that
\begin{equation}\label{eq_hatphi}
	\hat{\phi}^\gamma(\bbeta;\mathbf w)=  \frac{1}{m_0} +\frac{1}{m_0}\big(\log(m_0) +\log( \inner{\exp\left({\bbeta}\cdot{\fm}_\#\right)})\big) -{\bbeta}\cdot \mathbf w + \frac{\gamma}{2} \norm{\bbeta}^2,
\end{equation}
follows directly from the definitions of $\hat{\phi}^\gamma$, $\phi^\gamma$, and $\vartheta$ given in Eqs.~\eqref{eq_def_reduced_obj_func}, \eqref{eq_entropyDualOCP_part_reg_objectF}, and \eqref{eq_ref_alpha_reduction}, respectively.
From Eq.~\eqref{eq_hatphi}, it is clear that $\hat{\phi}^\gamma$ is twice differentiable with respect to $\bbeta$. To show that $\hat{\phi}^\gamma$ is convex, we use the fact that sums of convex functions are convex and prove that $\log\left(\inner{\exp(\bbeta\cdot {\fm}_\#}\right)$ satisfies Jensen's inequality, thus is convex.
We prove Jensen's inequality by using the monotonicity of the logarithm and Hölder's inequality with $1/p=t$ and $1/q=(1-t)$. Specifically, for $t\in(0,1)$ and ${\bbeta}_1,{\bbeta}_2\in\mathbb{R}^{n}$ with $\bbeta_1\not= \bbeta_2$, we have
\begin{align*}
	\log(\inner{\exp((t{\bbeta}_1 + (1-t){\bbeta}_2)\cdot\mathbf  m_\#)})
	=& \log(\inner{\exp(t{\bbeta}_1\cdot {\fm}_\#)\exp((1-t){\bbeta}_2\cdot{\fm}_\#)}) &&\\
	\leq& \log(\inner{\exp({\bbeta}_1\cdot{\fm}_\#)}^t\inner{\exp({\bbeta}_2\cdot{\fm}_\#)}^{(1-t)}) && \text{(Hölder's ineq.)}\\
	=& \log(\inner{\exp({\bbeta}_1\cdot {\fm}_\#)}^t)+\log(\inner{\exp({\bbeta}_2\cdot{\fm}_\#)}^{(1-t)}) && \\
	=& t\log(\inner{\exp({\bbeta}_1\cdot\mathbf  m_\#)})+(1-t)\log(\inner{\exp({\bbeta}_2\cdot{\fm}_\#)}). &&
\end{align*}
Further, for $\gamma>0$, $\hat{\phi}^\gamma$ is strictly convex since $ \frac{\gamma}{2} \norm{\bbeta}^2$ is strictly convex.%
\footnote{It turns out that the function $\hat{\phi}^\gamma(\cdot;\mathbf w)$ is strictly convex even when $\gamma=0$.  This can easily be shown by verifying that H\"olders inequality is strict unless $\bbeta_1 = \bbeta_2$.}
\end{proof}

\subsection{Proof of Lemma~\ref{lem_reduced_dual_min}}
\label{sec_lem_reduced_dual_min_pf}
\begin{proof}
Since $\hat{\phi}^\gamma$ is strictly convex, there exists a unique minimizer $\bbeta_{\ofus}^\gamma$ in \eqref{eq_reduced_reg_closure} that satisfies the first order optimality condition, i.e.,
\begin{equation}\label{eq_reg_reduced_FOOC}
	\begin{alignedat}{2}
    0=\nabla_{\bbeta} \hat{\phi}^\gamma(\bbeta_{\ofus}^\gamma; \ofus) &=  \frac{1}{m_0}{\big\langle{\exp(\bbeta_{\ofus}^\gamma\cdot {\fm}_\#)}\big\rangle}^{-1}
    \big\langle{{\fm}_\#\exp(\bbeta_{\ofus}^\gamma\cdot {\fm}_\#)}\big\rangle  - \ofus + \gamma\bbeta_{\ofus}^\gamma\\
    &=  \big\langle{{\fm}_\#\exp([\vartheta(\bbeta_{\ofus}^\gamma),(\bbeta_{\ofus}^\gamma)^\Tr]^\Tr\cdot {\fm})}\big\rangle  - \ofus + \gamma{\bbeta}_{\ofus}^\gamma.
	\end{alignedat}
\end{equation}
On the other hand, the first order optimality condition of \eqref{eq_entropyDualOCP_part_reg} is given by
\begin{align}\label{eq_reg_FOOC}
	0=\nabla_{\balpha} {\phi}^\gamma(\balpha_{\ofu}^\gamma; {\ofu}) 
 = \inner{{\fm}\exp( \balpha_{\ofu}^\gamma \cdot {\fm})} - {{\ofu}} + \gamma\left[0,(\balpha_{\ofu}^\gamma)_\#^\Tr\right]^\Tr.
\end{align}
Using Eq.~\eqref{eq_reg_reduced_FOOC} and Lemma~\ref{lem_alpha_0}, it is straightforward to verify that $[\vartheta(\bbeta_{\ofus}^\gamma),(\bbeta_{\ofus}^\gamma)^\Tr]^\Tr$ satisfies the optimality condition in Eq.~\eqref{eq_reg_FOOC}.  The equivalence of $\balpha_{\ofu}^\gamma$ and $[\vartheta(\bbeta_{\ofus}^\gamma),(\bbeta_{\ofus}^\gamma)^\Tr]^\Tr$ then follows directly from the strict convexity of ${\phi}^\gamma$, which implies uniqueness of the minimizer.
Finally, the fact that $\phi^\gamma(\balpha_{\ofu}^\gamma;{\ofu})= \hat{\phi}^\gamma({\bbeta}_{\ofus}^\gamma;\ofus)$ is a direct consequence of the definition of $\hat{\phi}^\gamma$ in Eq.~\eqref{eq_def_reduced_obj_func}.
\end{proof}

\subsection{Proof of Lemma~\ref{lem_hessian_cond}}
\label{sec_lem_hessian_cond_pf}
\begin{proof}
It follows from the formulation of $\hat{\phi}^\gamma$ in \eqref{eq_reduced_obj_reformulation} that 
$
	\hat{H}^\gamma(\bbeta) = \hat{H}^{\gamma=0}(\bbeta) + \gamma I.
$
Thus, the condition number of $\hat{H}^\gamma(\bbeta)$ can be written as 
\begin{equation}
    \text{cond}(\hat{H}^\gamma(\bbeta)) = 
    \frac{\hat{\lambda}_{\max} + \gamma }{\hat{\lambda}_{\min} + \gamma },
\end{equation} 
where $\hat{\lambda}_{\max}$ and $\hat{\lambda}_{\min}$ are the maximum and minimum eigenvalues of $\hat{H}^{\gamma=0}(\bbeta)$, respectively. 
According to Lemma~\ref{lem_reduced_dual_obj}, $\hat{\phi}^\gamma$ is strictly convex even when $\gamma=0$. 
Hence, $\hat{\lambda}_{\min}\geq0$, and $\text{cond}(\hat{H}^\gamma(\bbeta))$ is upper bounded by $1+\gamma^{-1}\hat{\lambda}_{\max}$.
\end{proof}

\subsection{Proof of Theorem~\ref{thm_reg_rescale}}
\label{sec_thm_reg_rescale_pf}

\begin{proof}
According to Lemma~\ref{lem_reduced_dual_min}, $\bm\alpha_{\ofu}^\gamma = [\vartheta({\bbeta}_{\ofus}^\gamma),({\bbeta}_{\ofus}^\gamma)^\Tr]^\Tr$. 
Thus, we first show that $\bm\alpha_{{\fu}}^\gamma = [\alpha_{\overline{\fu},0}^\gamma + \frac{\log \,u_0}{m_0}, (\bm\alpha_{\ofu}^\gamma)_{\#}^\Tr]^\Tr$.
The first order optimality condition of \eqref{eq_entropyDualOCP_part_reg} at $\fu$ and $\overline{\fu}$ (see, e.g., \eqref{eq_reg_FOOC}) gives
\begin{align}\label{eq_reg_FOOC_2}
{{{\fu}}} = 
\inner{{\fm}\exp( \balpha_{{\fu}}^\gamma \cdot {\fm})} + u_0\gamma\left[0,(\balpha_{{\fu}}^\gamma)_\#^\Tr\right]^\Tr
\quad\text{and}\quad
{{\ofu}} = 
\inner{{\fm}\exp( \balpha_{\ofu}^\gamma \cdot {\fm})} + \gamma\left[0,(\balpha_{\ofu}^\gamma)_\#^\Tr\right]^\Tr,
\end{align}
respectively.
Multiplying the second equation above by $u_0$ yields
\begin{equation}
\begin{alignedat}{2}
\fu = u_0 \, \overline{\fu} &= 
\inner{u_0\,{\fm}\exp( \balpha_{\ofu}^\gamma \cdot {\fm})} + u_0\gamma\left[0,(\balpha_{\ofu}^\gamma)_\#^\Tr\right]^\Tr\\
&= 
\inner{{\fm}\exp\left( \left[\alpha_{\overline{\fu},0}^\gamma + \frac{\log \,u_0}{m_0}, (\balpha_{\ofu}^\gamma)_{\#}^\Tr \right]^\Tr \cdot {\fm}\right)} + u_0\gamma\left[0,(\balpha_{\ofu}^\gamma)_\#^\Tr\right]^\Tr.
\end{alignedat}
\end{equation}
Therefore, $[\alpha_{\overline{\fu},0}^\gamma + \frac{\log \,u_0}{m_0}, (\bm\alpha_{\ofu}^\gamma)_{\#}^\Tr]^\Tr$ satisfies the optimality condition at $\fu$ given in \eqref{eq_reg_FOOC_2}, we have $\bm\alpha_{{\fu}}^\gamma = [\alpha_{\overline{\fu},0}^\gamma + \frac{\log \,u_0}{m_0}, (\bm\alpha_{\ofu}^\gamma)_{\#}^\Tr]^\Tr$.

We next show that $\hat{h}^\gamma\colon \mathbb{R}^{n}\to\mathbb{R}$ is strictly convex. By the definition of $\hat{h}^\gamma$ and the formulation of $\hat{\phi}^\gamma$ in \eqref{eq_reduced_obj_reformulation},
\begin{equation}\label{eq_hat_h}
\begin{alignedat}{2}    \hat{h}^\gamma(\overline{\fu}_{\#}) 
% &= - \frac{1}{ m_0} -\frac{1}{m_0}\big(\log(m_0) +\log( \inner{\exp\left(\bbeta_{\ofus}^\gamma\cdot{\fm}_\#\right)})\big) +{\bbeta}_{\ofus}^\gamma \cdot \overline{\fu}_{\#} - \frac{\gamma}{2} \|\bbeta_{\ofus}^\gamma\|^2\\
    &= \sup_{{\bbeta}\in\mathbb{R}^{n}} 
    \left[
      -\hat{\phi}^\gamma(\bbeta;\ofus) \right]
      =\sup_{{\bbeta}\in\mathbb{R}^{n}} 
    \left[
     {\bbeta} \cdot \overline{\fu}_{\#} - \bigg(\hat{\phi}^\gamma(\bbeta;\ofus)  +{\bbeta} \cdot \overline{\fu}_{\#}\bigg)\right] \:.
\end{alignedat}
\end{equation}
% \begin{equation}\label{eq_hat_h}
% \begin{alignedat}{2}    \hat{h}^\gamma(\overline{\fu}_{\#}) 
% &= - \frac{1}{ m_0} -\frac{1}{m_0}\big(\log(m_0) +\log( \inner{\exp\left(\bbeta_{\ofus}^\gamma\cdot{\fm}_\#\right)})\big) +{\bbeta}_{\ofus}^\gamma \cdot \overline{\fu}_{\#} - \frac{\gamma}{2} \|\bbeta_{\ofus}^\gamma\|^2\\
%     &= \sup_{{\bbeta}\in\mathbb{R}^{n}} \left[
%      {\bbeta} \cdot \overline{\fu}_{\#} - \bigg(\frac{1}{m_0} +\frac{1}{m_0}\big(\log(m_0) +\log( \inner{\exp\left({\bbeta}\cdot{\fm}_\#\right)})\big) + \frac{\gamma}{2} \|\bbeta\|^2\bigg)\right] \:,
% \end{alignedat}
% \end{equation}
Thus $\hat{h}^\gamma$ is the Legendre dual of the function $\bbeta \mapsto \hat{\phi}^\gamma(\bbeta;\ofus)  +{\bbeta} \cdot \overline{\fu}_{\#}$,
%\frac{1}{m_0} +\frac{1}{m_0}\big(\log(m_0) +\log( \inner{\exp\left({\bbeta}\cdot{\fm}_\#\right)})\big) + \frac{\gamma}{2} \|\bbeta\|^2$, 
which is strictly convex by Lemma~\ref{lem_reduced_dual_obj}. Therefore, $\hat{h}^\gamma$ is also strictly convex.
As for the gradient of $\hat{h}^\gamma$, since $\hat{h}^\gamma(\fw):= -\hat{\phi}^\gamma({\bbeta}_{\fw}^\gamma;\fw)$ and $-\nabla_{\fw} \hat{\phi}^\gamma( \bbeta;\fw) = \bbeta$,
\begin{equation}\label{eq_grad_h_hat}
\nabla_{\fw}\hat{h}^\gamma(\fw) 
% = -\nabla_{\fw} \hat{\phi}^\gamma({\bbeta}_{\fw}^\gamma;\fw) -\nabla_{\fw}{\bbeta}_{\fw}^\gamma \left(\nabla_{\bbeta_{\fw}^\gamma}\hat{\phi}^\gamma(\bbeta_{\fw}^\gamma;\fw)\right)
={\bbeta}_{\fw}^\gamma 
-\nabla_{\fw}{\bbeta}_{\fw}^\gamma \left(\nabla_{\bbeta_{\fw}^\gamma}\hat{\phi}^\gamma(\bbeta_{\fw}^\gamma;\fw)\right).
\end{equation}
Since $\bbeta_{\fw}^\gamma$ minimizes $\hat{\phi}^\gamma$, the last term in \eqref{eq_grad_h_hat} vanishes, which proves the claim.

To derive the relation between $h^\gamma$ and $\hat{h}^\gamma$, see Definition~\ref{def_entropy} and~\ref{def_reduced_values}, we start with
\begin{equation}\label{eq_h}
    h^\gamma(\fu) = - \phi^{\gamma}(\bm\alpha_{{\fu}}^\gamma; \fu) 
    = - \inner{\exp(\bm\alpha_{{\fu}}^\gamma \cdot \mathbf{m})} + \bm\alpha_{{\fu}}^\gamma\cdot \fu - \frac{u_0\gamma}{2} \|(\bm\alpha_{{\fu}}^\gamma)_{\#}\|^2,
\end{equation}
see Eq.~\eqref{eq_entropy_phi_def}. Plugging $\bm\alpha_{{\fu}}^\gamma = [\alpha_{\overline{\fu},0}^\gamma + \frac{\log \,u_0}{m_0}, (\bm\alpha_{\ofu}^\gamma)_{\#}^\Tr]^\Tr$, where $\bm\alpha_{\ofu}^\gamma = [\vartheta({\bbeta}_{\ofus}^\gamma),({\bbeta}_{\ofus}^\gamma)^\Tr]^\Tr$, into the above equation then gives
\begin{equation}\label{eq_h_extension}
\begin{alignedat}{2}
    h^\gamma(\fu)  &= - \inner{\exp\left(\left[\alpha_{\overline{\fu},0}^\gamma + \frac{\log \,u_0}{m_0}, (\bm\alpha_{\ofu}^\gamma)_{\#}^\Tr\right]^\Tr \cdot \mathbf{m}\right)} + \,\left[\alpha_{\overline{\fu},0}^\gamma + \frac{\log \,u_0}{m_0}, (\bm\alpha_{\ofu}^\gamma)_{\#}^\Tr\right]^\Tr\cdot \overline{\fu} u_0 - \frac{u_0\gamma}{2} \|(\bm\alpha_{\ofu}^\gamma)_{\#}\|^2\\
    &= - u_0 \inner{\exp(\bm\alpha_{\ofu}^\gamma \cdot \mathbf{m})} + \frac{u_0}{m_0}\log \,u_0 + u_0\,\bm\alpha_{\ofu}^\gamma\cdot \overline{\fu} - \frac{u_0\gamma}{2} \|(\bm\alpha_{\ofu}^\gamma)_{\#}\|^2\\
    &= - \frac{u_0}{m_0} + \frac{u_0}{m_0}\log \,u_0 + u_0 \vartheta({\bbeta}_{\ofus}^\gamma) + u_0\,{\bbeta}_{\ofus}^\gamma \cdot \overline{\fu}_{\#} - \frac{u_0\gamma}{2} \|{\bbeta}_{\ofus}^\gamma\|^2\\
    &= u_0 \hat{h}^\gamma(\overline{\fu}_{\#}) + \frac{u_0}{m_0}\log \,u_0\:,
\end{alignedat}
\end{equation}
where the last equality uses the first equality in \eqref{eq_hat_h} and the definition of $\vartheta$ in \eqref{eq_ref_alpha_reduction}.
Finally, $h^\gamma(\overline{\fu})= \hat{h}^\gamma(\overline{\fu}_{\#})$ is a direct consequence of the relation between $h^\gamma$ and $\hat{h}^\gamma$ given in \eqref{eq_h_extension}. 
\end{proof}

\subsection{Proof of Theorem~~\ref{theo_reg_ansatz_error}}
\label{sec_theo_reg_ansatz_error_pf}
\begin{proof}

% \begin{align}
%     \widetilde{\fu}^\gamma &= \inner{\begin{pmatrix}
%         m_0 \\ \fm_#
%     \end{pmatrix} 
%     \exp(\balpha_{\fu}^\gamma \cdot \fm)} \\
%     &= \inner{\begin{pmatrix}
%         m_0 \\ \fm_#
%     \end{pmatrix}
%     \exp(\vartheta m_0 + \log(u_0) + \bbeta \cdot \fm_#) }\\
%     &=u_0 \inner{\begin{pmatrix}
%         m_0 \\ \fm_#
%     \end{pmatrix}
%     \exp(\vartheta m_0 + \log(u_0) + \bbeta \cdot \fm_#) }\\
%     &=u_0 \inner{\begin{pmatrix}
%         m_0 \\ \fm_#
%     \end{pmatrix}
%     \exp(\vartheta m_0 + \log(u_0) + \bbeta \cdot \fm_#) }\\
% \end{align}

First, it follows from \eqref{eq_alpha_reconstructor} and \eqref{eq_scaled_alpha} that 
$(\balpha_\fu^\gamma)_{\#} = (\balpha_{\ofu}^\gamma )_{\#} = {\bbeta}_{\ofus}^\gamma
$,
which when combined with the first equation in \eqref{eq_reg_FOOC_2} implies that
%  
% Since $\widetilde{\fu}^\gamma:=\langle\mathbf{m} \widetilde{f}_{\fu}^\gamma\rangle = \langle\mathbf{m} \eta'_*(\balpha_{\fu}^\gamma\cdot {\fm})\rangle$, it follows from the relation between $\balpha_{\fu}^\gamma$ and ${\bbeta}_{\ofus}^\gamma$ given in \eqref{eq_scaled_alpha} and the first order optimality condition of \eqref{eq_reduced_reg_closure} that
\begin{equation}\label{eq_part_reg_moment_mult}
    \widetilde{\fu}^\gamma = \fu - u_0 \gamma \, [0,({\bbeta}^{\gamma}_{\ofus})^\Tr]^\Tr.
\end{equation}
Since ${\bbeta}_{\ofus}^\gamma\in B_M$, \eqref{eq_regularization_error} immediately holds.
% \begin{align}
%  	\|\widetilde{\fu}^\gamma-\fu\|=
%   u_0\gamma\,\|{\bbeta}^{\gamma}_{\ofus} \| \leq u_0\gamma M.
% \end{align}
In addition, since $\fg^\gamma_{\fu}= \balpha_{\fu}^\gamma - \frac{\gamma}{2}\big[\|(\balpha_{\fu}^\gamma)_{\#}\|^2,\mathbf{0}^{\Tr}\big]^{\Tr}$ (see \eqref{eq_grad_h_thm} in Theorem~\ref{thm_grad_h}), 
\begin{equation}\label{eq_part_reg_moment_grad}
    \fu^\gamma 
    =\langle \mathbf{m} f_{\fu}^\gamma \rangle 
    = \langle \mathbf{m} \exp( \fg^\gamma_{\fu} \cdot \fm) \rangle 
    = \langle \mathbf{m} \exp( \balpha^\gamma_{\fu} \cdot \fm) \rangle\exp\big(-\frac{\gamma}{2}\|{\bbeta}_{\ofus}^\gamma\|^2 m_0\big)
    = \exp\big(-\frac{\gamma}{2}\|{\bbeta}_{\ofus}^\gamma\|^2 m_0 \big) \widetilde{\fu}^\gamma.
\end{equation}
Further, \eqref{eq_part_reg_moment_grad} implies
\begin{align}\label{eq_helperxyz}
	\|\fu^\gamma-\widetilde{\fu}^\gamma\|=\big(1-\exp(-\frac{\gamma}{2}\|{\bbeta}_{\ofus}^\gamma\|^2m_0)\big)|\|\widetilde{\fu}^\gamma\|
	\leq \big(1-\exp(-\frac{\gamma}{2}M^2m_0)\big)\|\widetilde{\fu}^\gamma\|
 \leq  \big(1-\exp(-\frac{\gamma}{2}M^2m_0)\big) \,u_0\,(n+1).
\end{align}
The fact that
\begin{equation}
    \max\menge{\abs{m_i(\fv)} : \fv\in\mathbb{S}^2, i=0,\dots, n}\leq \left(\frac{n+1}{\abs{\mathbb{S}^2}}\right)^{1/2},
\end{equation}
see~\cite[Section 2.1.3]{atkinson2012spherical}, where $\abs{\mathbb{S}^2}$ is the measure of the unit sphere. Normalization of the moment vector and the triangle inequality gives
\begin{equation}
    \norm{\frac{\widetilde{\fu}^\gamma}{u_0}}\leq \frac{(n+1)^{3/2}}{\abs{\mathbb{S}^2}^{1/2}}
\end{equation}
which yields the last inequality of Eq.~\eqref{eq_helperxyz}.
The bound in \eqref{eq:ugamma-diffs} a direct consequence of the triangle inequality. 
\end{proof}

\subsection{Proof of Theorem~\ref{theo_entropy_dissipation}}
\label{sec_theo_entropy_dissipation_pf}
\begin{proof} 
For entropy dissipation, we first show that the integrability condition holds for the entropy/entropy-flux pair $h^\gamma$ and $\mathbf{j}^\gamma$ with the flux term 
$\mathbf{F}(\fu)=\inner{\mathbf{v}\otimes\mathbf{m}f^\gamma_{\fu}}$.
For $i=1,2,3$, let $j_{i}^\gamma:=\inner{v_i \eta(f^\gamma_{\fu})}$ and $F_i(\fu):=\inner{v_i\,\mathbf{m}f^\gamma_{\fu}}$, then 
% \begin{equation}\label{eq_entropy_entropyflux_pair}
% \begin{alignedat}{2}
% 	\nabla_\fu j^\gamma_{i}(\fu)= \nabla_\fu\inner{v_i\eta(f^\gamma_{\fu})}
% 	&= \inner{v_i\eta^\prime(f^\gamma_{\fu})\nabla_\fu f^\gamma_{\fu}} \\
% 	&= \inner{v_i\eta^\prime(\eta_*^\prime(\mathbf{g}^\gamma_{\fu}\cdot\mathbf{m}))\nabla_\fu f^\gamma_{\fu}} \qquad\qquad&&\text{(see \eqref{eq_ansatzes})}\\
% 	&= \inner{\mathbf{g}^\gamma_{\fu}\cdot\mathbf{m}\,v_i\nabla_\fu f^\gamma_{\fu}} &&\text{(Legendre duality)}\\
% &=\nabla_\fu h^\gamma(\fu)\cdot \nabla_\fu F_i(\fu).
% \end{alignedat}
% \end{equation}
\begin{equation}\label{eq_entropy_entropyflux_pair}
	\nabla_\fu j^\gamma_{i}(\fu)= \inner{v_i\nabla_\fu \eta(f^\gamma_{\fu})}
	= \inner{v_i\eta^\prime(f^\gamma_{\fu})\nabla_\fu f^\gamma_{\fu}} 
	= \inner{v_i\eta^\prime(\eta_*^\prime(\mathbf{g}^\gamma_{\fu}\cdot\mathbf{m}))\nabla_\fu f^\gamma_{\fu}} 
	= \mathbf{g}^\gamma_{\fu}\cdot\inner{v_i\mathbf{m}\,\nabla_\fu f^\gamma_{\fu}}
=\nabla_\fu h^\gamma(\fu)\cdot \nabla_\fu F_i(\fu),
\end{equation}
where we have used the chain rule, the definition of $f^\gamma_{\fu}$ in Eq.~\eqref{eq_ansatzes}, the Legendre duality, and the fact that $\mathbf{g}^\gamma_{\fu}= \nabla_\fu h^\gamma(\fu)$.
To derive the entropy dissipation law, we multiply the moment system from the left with $\nabla_\fu h^\gamma(\fu)$ and obtain
\begin{equation}
	\nabla_\fu h^\gamma(\fu)\cdot\partial_t \fu + \nabla_\fu h^\gamma(\fu)\cdot\sum_{i=1}^3\partial_{{x}_i}F_i(\fu)= \nabla_\fu h^\gamma(\fu)\cdot\inner{\mathbf{m} Q( {f}_{\fu}^\gamma )}\:.
\end{equation}
From the reverse chain rule and Eq.~\eqref{eq_entropy_entropyflux_pair}, this leads to 
\begin{equation}
	\partial_t h^\gamma(\fu) + \nabla_\mathbf{x}\cdot \bm j^\gamma(\fu)= \nabla_\fu h^\gamma(\fu)\cdot\inner{\mathbf{m} Q( {f}_{\fu}^\gamma)}.
\end{equation}
Entropy dissipation requires non-positivity of right-hand side term, which is given by
\begin{equation}
\nabla_\fu h^\gamma(\fu)\cdot\inner{\mathbf{m} Q( {f}_{\fu}^\gamma )}
= \inner{\mathbf{g}^\gamma_{\fu}\cdot\mathbf{m} Q( {f}_{\fu}^\gamma )}
= \inner{\eta^\prime(\eta_*^\prime(\mathbf{g}^\gamma_{\fu}\cdot\mathbf{m})) Q( {f}_{\fu}^\gamma )}
=\inner{\eta^\prime({f}_{\fu}^\gamma) Q( {f}_{\fu}^\gamma )}\leq0,
\end{equation}
where the equalities follow from similar properties used in Eq.~\eqref{eq_entropy_entropyflux_pair} in the reversed order, and the inequality is from the entropy dissipation law in Eq.~\eqref{eq_kinetic_entropy_dissipation}.

We next show that the closed moment system is hyperbolic. Note that the hyperbolicity follows from the entropy dissipation property of the moment system.
Consider again the flux function of the moment system
\begin{align}
	\mathbf{F}(\fu)=\inner{\mathbf{v}\otimes\mathbf{m}f^\gamma_{\fu}} =\inner{\mathbf{v}\otimes\mathbf{m}\,\eta'_*\left(\mathbf{g}_{\fu}^\gamma\cdot\mathbf{m}\right)}.
\end{align}
We define $j_{i,*}(\mathbf{g})=\inner{v_i\eta_*(\mathbf{g}\cdot \fm)}$, $i=1, 2, 3$, such that
$\nabla_{\mathbf{g}}j_{i,*}(\mathbf{g}^\gamma_{\fu})=F_i(\fu)$, and denote
\begin{align}
	J_i =  {\nabla_\fg^2}j_{i,*}(\mathbf{g}^\gamma_{\fu}),\quad i = 1,2,3,\quad\text{and}\quad
    K = \nabla_\fu^2 h^\gamma(\fu) = \nabla_\fu \mathbf{g}^\gamma_{\fu},
\end{align}
where $K$ is symmetric positive definite by the strict convexity of $h^\gamma(\fu)$.
By using chain rule, the flux term in Eq.~\eqref{eq_hyper_moment_sys} can be written as
\begin{equation}
    \nabla_{\mathbf{x}}\inner{\mathbf{v}\otimes\mathbf{m}f^\gamma_{\fu}} = \sum_{i=1}^3 \partial_{x_i} F_i(\fu) = \sum_{i=1}^3 \partial_{x_i} (\nabla_{\mathbf{g}}j_{i,*}(\mathbf{g}^\gamma_{\fu})) = \sum_{i=1}^3 {\nabla_\fg^2}j_{i,*}(\mathbf{g}^\gamma_{\fu}) \cdot \nabla_\fu \mathbf{g}^\gamma_{\fu} \cdot \partial_{x_i}\fu
    = \sum_{i=1}^3  J_i K \partial_{x_i}\fu\:.
\end{equation}
% It is left to show that $\sum_{i=1}^3 a_i J_i K$ is diagonalizable with real eigenvalues for $a_i\in\mathbb{R}$, $i=1,2,3$. 
% Since $K$ is symmetric and positive definite, we can write $K=K^{1/2}K^{1/2}$ with $K^{1/2}$ symmetric and positive definite. Thus,
% \begin{align}\label{eq_JK}
% JK= K^{-1/2}(K^{1/2}JK^{1/2})K^{1/2},
% \end{align}
% where $J$ denotes $\sum_{i=1}^3 a_i J_i$, which is symmetric because each $J_i$ is the Hessian of $j_{i,*}$. Therefore, $K^{1/2}JK^{1/2}$ is symmetric and real and thus is a diagonalizable matrix with real eigenvalues.
% Consequently, since $JK$ is similar to $K^{1/2}JK^{1/2}$ (see Eq.~\eqref{eq_JK}, $JK$ is also diagonalizable with real eigenvalues, which yields the hyperbolicity of Eq.~\eqref{eq_hyper_moment_sys}.
The symmetric positive definite matrix $K$ then symmetrizes \cite{friedrichs1971systems} the closed moment system \eqref{eq_hyper_moment_sys}; that is, after multiplication on the left by $K$, 
\begin{equation}
    K\partial_t\fu +  \sum_{i=1}^3 K J_i K \partial_{x_i}\fu\: = K \inner{\mathbf{m} Q( {f}_{\fu}^\gamma)},
\end{equation}
where the matrix $K J_i K$ is symmetric for each $i \in \{1,2,3\}$.

% \red{For symmetry of the hyperbolic conservation law $\partial_t\fu +  \sum_{i=1}^3  J_i K \partial_{x_i}\fu\: = 0$, consider left multiplication with $K$, yielding
% \begin{align}
%     K\partial_t\fu +  \sum_{i=1}^3 K J_i K \partial_{x_i}\fu\: = 0.
% \end{align}
% since $K$ is symmetric and positive definite, .. [NEED references here..]
% }
\end{proof}

\end{document}